\newtheorem{theorem}{Theorem}[chapter]
\newtheorem{lemma}[theorem]{Lemma}
\newtheorem{prop}[theorem]{Proposition}
\newtheorem{cor}[theorem]{Corollary}
\theoremstyle{definition}
\newtheorem{definition}[theorem]{Definition}
\newtheorem{example}[theorem]{Example}
\theoremstyle{remark}
\newtheorem{rmk}[theorem]{Remark}
\numberwithin{equation}{chapter}
\numberwithin{section}{chapter}
\DeclareMathOperator{\SL}{SL}
\DeclareMathOperator{\GL}{GL}
\DeclareMathOperator{\SO}{SO}
\DeclareMathOperator{\Aut}{Aut}
\DeclareMathOperator{\End}{End}
\DeclareMathOperator{\vol}{vol}
\DeclareMathOperator{\covol}{covol}
\DeclareMathOperator{\syst}{syst}
\newcommand{\N}{\mathbb N}
\newcommand{\Z}{\mathbb Z}
\newcommand{\Q}{\mathbb Q}
\newcommand{\R}{\mathbb R}
\newcommand{\C}{\mathbb C}
\renewcommand{\sl}{\mathfrak{sl}}
\newcommand{\F}{\mathcal F}
\renewcommand{\O}{\mathcal O}
\newcommand{\tr}{\mathrm{tr}}
\newcommand{\V}{\mathbf V}
\newcommand{\g}{\mathfrak g}
\newcommand{\h}{\mathfrak h}
\newcommand{\G}{\mathbf G}
\newcommand{\del}{\partial}
\newcommand{\W}{\mathbf{W}}
\newcommand{\Gm}{\mathbf{G}_m}
\renewcommand{\H}{\mathbf{H}}
\newcommand{\Lie}{\mathrm{Lie}}
\newcommand{\Int}{\mathrm{Int}}
\newcommand{\Ad}{\mathrm{Ad}}
\newcommand{\norm}[1]{|\!|#1|\!|}
\newcommand{\cell}{\overline{\ell}}
\begin{document}
\title{Arithmetic groups \\[1cm] {\large \normalfont Lecture notes \\
version 0.6}}
\date{Fall 2014}


\author{\large Vincent Emery}



\maketitle

\tableofcontents

\chapter*{Introduction}

Arithmetic groups are defined by generalizing the construction of the
\emph{modular group} $\SL_2(\Z)$, which is obtained by taking matrices
with integral coefficients in the larger group $\SL_2(\R)$. As a first
-- obvious -- generalization, we can give the example of $\SL_n(\Z)$: we
have just increased the dimension.

The modular group appears in several different areas of mathematics, and
it is not surprising that the same is true for arithmetic groups.
For instance, let us mention:

\begin{description}
  \item[Number theory] The modular group is used to define the
    functional equations satisfied by \emph{modular forms}, which
    play a central role in analytic number theory. Using more general
    arithmetic groups instead of $\SL_2(\Z)$, one can extend the notion
    of modular forms to the more general setting of \emph{automorphic forms}.  
  \item[Hyperbolic geometry]
    The $n$-sphere $S^n$ is a Riemannian manifold of constant positive sectional
    curvature. It is compact. For any dimension $n$, there also
    exist compact $n$-manifolds with constant \emph{negative} sectional
    curvature. Such a manifold (compact or not) is called hyperbolic.
    In low dimensions (up to $5$) there exist geometric constructions of
    compact hyperbolic manifolds. But for $n > 5$ one needs to study
    arithmetic groups to construct such $n$-manifolds $M$. Arithmetic groups
    will appear as their fundamental groups $\pi_1(M)$.
  \item[Expander graphs]
    \emph{Expander graphs} are finite graphs that despite relatively
    small number of edges have very good connectivity.
    These graphs are useful in many areas of mathematics, but their
    importance first became clear in computer science, with application
    to coding theory notably. One way to obtain expander graphs is to
    consider the Cayley graphs of the finite groups $\SL_n(\Z/m\Z)$ for 
    $n>2$. The idea of this construction -- due to Grigori Margulis -- 
    uses deep facts from the representation theory of $\SL_n(\Z)$. Using
    other arithmetic groups (or generalizations of them) and their finite quotients, 
    one can obtain many more construction of expander graphs.
    See for instance the survey by Lubotzky \cite{Lubo12}  for more information 
    about expander graphs.
\end{description}

As for $\SL_2(\Z) \subset \SL_2(\R)$, an arithmetic group $\Gamma$ is
naturally embedded in a larger group $G$ -- typically a real Lie group.
What makes arithmetic groups interesting is that in many cases $\Gamma$
is a lattice in $G$, which means that the quotient $G/\Gamma$ has finite
volume. A lattice  $\Gamma \subset G$ is called \emph{uniform} if 
the quotient $G/\Gamma$ is compact. In this course we will mainly study
arithmetic groups that are uniform lattices. 

This course is based on many parts of the lecture notes \cite{Benoist08} 
by Yves Benoist.
We refer to these notes and to \cite{WittMorr14} for more comprehensive 
texts on arithmetic groups and lattices. Lizhen Ji's book \cite{Ji08} is an
excellent survey about the different connections of arithmetic groups
with various subjects in mathematics.

\chapter{Euclidean lattices}

This chapter serves as an introduction to the more general notion of
lattice that will be presented in Chapter~2; we will also prove Mahler's
compactness criterion (Theorem~\ref{thm:Mahler}), which will be
important to us in Chapter~5.

\section{Definitions and basic facts}

We consider the Euclidean $n$-space $\R^n$, with
the standard (positive definite) scalar product $\left< \cdot, \cdot
\right>$ and the associated norm $\norm{\cdot}$. 

\begin{definition}
  \label{def:eucl-lattice}
        A \emph{(Euclidean) lattice} in $\R^n$ is a subgroup $L \subset \R^n$ 
        (with respect to vector addition) of the form
      \begin{align}
        L &= \Z x_1 \oplus \cdots \oplus \Z x_n,
        \label{eq:Eucl-lattice}
      \end{align}
      where $x_1, \dots, x_n$ is a basis of $\R^n$.
\end{definition}

\begin{rmk}
  \label{rmk:eucl-lattice-equiv-def}
  Equivalently, a Euclidean lattice $L \subset \R^n$ is an abelian
  subgroup such that
  \begin{enumerate}
    \item $L$ spans $\R^n$;
    \item and $L$ has rank $n$.
  \end{enumerate}
\end{rmk}

\begin{example}
        $\Z^n$ is a Euclidean lattice in $\R^n$.
\end{example}

For $L \subset \R^n$ as in \eqref{eq:Eucl-lattice}, we define its \emph{covolume}
-- denoted $\covol(L)$ -- 
as the Lebesgue measure $\lambda(\F)$ of the  
\emph{fundamental domain} 
\begin{align}
  \F = \left\{ a_1 x_1 + \cdots + a_n x_n \in \R^n \; |\; a_i \in
  \left[0,1\right[ \right\}
  \label{eq:fund-dom-eucl}.
\end{align}
In other words, the covolume of $L$ is defined as the Euclidean volume
of the parallelotope $\F$. As is well known, this volume is given by the
square root of the determinant of the Gram matrix, that is,
\begin{align}
        \covol(L) &= \sqrt{\det(\left<x_i, x_j\right>)}. 
  \label{eq:Gramm}
\end{align}

\begin{rmk}
The definition of the covolume does not depends on a choice of the
$\Z$-basis $x_1,\dots, x_n$: if $x'_1, \dots, x'_n$ is another $\Z$-basis
of $L$, then the associated fundamental domain $\F'$ has volume
$|\det(A)|\cdot \lambda(\F)$, where $A$ is the matrix representing the
bases change $x_i \to x_i'$. But since $(x_i)$ and $(x_i')$ are both
$\Z$-bases of $L$, then $A \in \GL_n(\Z)$ and thus $\det(A) = \pm 1$.
\end{rmk}

A fundamental domain $\F$ has the property that all its translated
sets $x + \F$ with $x \in L$ are mutually disjoint.
Moreover, the union of all translated sets covers $\R^n$. 
\begin{lemma}
  \label{vol-F-max}
  Let $S \subset \R^n$ be a (measurable) set such that all $x + S$
  are mutually disjoint for $x \in L$. Then $\vol(S) \le \covol(L)$.
\end{lemma}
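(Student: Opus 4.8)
The plan is to exploit the fact recorded just before the statement: the translates $x + \F$, $x \in L$, of the fundamental domain $\F$ from \eqref{eq:fund-dom-eucl} tile $\R^n$, i.e.\ they are pairwise disjoint and their union is all of $\R^n$. Intersecting $S$ with this tiling produces a countable measurable partition
\[
  S = \bigsqcup_{x \in L}\bigl(S \cap (x + \F)\bigr),
\]
where each piece $S \cap (x+\F)$ is measurable because $S$ and $\F$ are. Since $L$ is countable and Lebesgue measure is countably additive, $\vol(S) = \sum_{x \in L} \vol\bigl(S \cap (x+\F)\bigr)$.

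Next I would translate each piece back into $\F$: set $S_x = \bigl(S \cap (x + \F)\bigr) - x \subset \F$. Translation invariance of Lebesgue measure gives $\vol(S_x) = \vol\bigl(S \cap (x+\F)\bigr)$, so it suffices to prove $\sum_{x\in L}\vol(S_x) \le \vol(\F) = \covol(L)$. This will follow from monotonicity and countable additivity of the measure once we know that the sets $S_x$, all of which lie inside $\F$, are themselves mutually disjoint; then $\bigsqcup_{x} S_x \subseteq \F$ and the estimate is immediate.

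The key step -- and the only place the hypothesis on $S$ is used -- is precisely this disjointness of the $S_x$. Suppose $y \in S_x \cap S_{x'}$ for some $x \neq x'$ in $L$. Then $y + x \in S$ and $y + x' \in S$, and writing $y + x = (x - x') + (y + x')$ exhibits $y+x$ as a common point of $S$ and of $(x - x') + S$. Since $x - x' \in L \setminus \{0\}$, this contradicts the assumption that the translates $z + S$, $z \in L$, are pairwise disjoint. Hence the $S_x$ are disjoint, and the chain $\vol(S) = \sum_{x} \vol(S_x) = \vol\bigl(\bigsqcup_{x} S_x\bigr) \le \vol(\F) = \covol(L)$ completes the proof. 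I do not anticipate a real obstacle here: the only points needing a word of care are the measurability of the pieces (routine) and the legitimacy of summing over the countable set $L$.
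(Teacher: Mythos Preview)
Your argument is correct and is essentially identical to the paper's proof: both partition $S$ by intersecting with the tiling $\{x+\F\}_{x\in L}$, translate the pieces back into $\F$, and use the disjointness hypothesis on the translates of $S$ to conclude that the translated pieces are disjoint inside $\F$. The paper compresses the disjointness verification into one line (writing $(S-x)\cap\F$ for your $S_x$ and jumping directly to the union), while you spell it out explicitly, but the content is the same.
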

\begin{proof}
  Since $\bigcup_{x \in L} x + \F = \R^n$ is a disjoint cover, we have 
  \begin{align*}
    \vol(S) & = \sum_{x \in L} \vol(S \cap (x + \F))\\
    &= \sum_{x \in L} \vol((S - x) \cap \F)\\
    &= \vol\left( \bigcup_{- x \in L} (x + S) \cap \F \right)\\
    &\le \vol(\F).
  \end{align*}
  For the last equality (3rd line) we use the fact that the $x + S$ are disjoint.
\end{proof}

Recall that a topological space $X$ is called \emph{discrete} if every
point $x \in X$ is open, i.e., all singletons $\left\{ x \right\}$ are
open (and consequently every subset $A \subset X$ is open). Clearly
$\R^n$ is not discrete, whereas its subgroup $\Z^n$ (with respect to the
induced topology) is discrete.

\begin{lemma}
  \label{lemma:L-discrete}
  Any lattice $L \subset \R^n$ is discrete.
\end{lemma}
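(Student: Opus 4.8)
The plan is to show that every point of $L$ is open in the subspace topology, and by homogeneity (since $L$ is a group acting on itself by translations, which are homeomorphisms of $\R^n$) it suffices to treat the point $0$. So the real task is to produce an open set $U \subset \R^n$ such that $U \cap L = \{0\}$. The natural candidate is a small open ball $U = B(0,\varepsilon)$ centered at the origin; I need to choose $\varepsilon$ small enough that no nonzero lattice vector lies inside.

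To find such an $\varepsilon$, I would use the basis $x_1, \dots, x_n$ from Definition~\ref{def:eucl-lattice}. Since $x_1, \dots, x_n$ is a basis of $\R^n$, the linear map $T \colon \R^n \to \R^n$ sending the standard basis vector $e_i$ to $x_i$ is a linear isomorphism, hence a homeomorphism. An element of $L$ is exactly $T(v)$ for $v \in \Z^n$, and $L$ is discrete if and only if $\Z^n$ is discrete (a homeomorphism carries discrete subsets to discrete subsets). Thus I reduce to the already-observed fact that $\Z^n \subset \R^n$ is discrete: the ball $B(0,1)$ in $\R^n$ meets $\Z^n$ only in $0$, because any nonzero integer vector has some coordinate of absolute value $\ge 1$ and hence Euclidean norm $\ge 1$. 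Pulling this back through $T$ gives the desired neighbourhood of $0$ in $\R^n$ meeting $L$ trivially.

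Alternatively, staying in $\R^n$ directly: given $y = \sum a_i x_i \in L$ with all $a_i \in \Z$, write its coordinates in the basis $(x_i)$ and note that the map $\R^n \to \R^n$, $\sum t_i x_i \mapsto (t_1, \dots, t_n)$, is continuous; so the preimage of the open cube $\left]-1,1\right[^n$ is an open neighbourhood $U$ of $0$, and any element of $L \cap U$ has all integer coordinates strictly between $-1$ and $1$, forcing it to be $0$. Either way the argument is completely elementary; there is no genuine obstacle here. The only point that deserves a word of care is the appeal to homogeneity: translation by a fixed lattice vector $x$ is a homeomorphism of $\R^n$ fixing $L$ setwise, so if $\{0\}$ is open in $L$ then so is $\{x\} = x + \{0\}$ for every $x \in L$, which is what "$L$ is discrete" means.
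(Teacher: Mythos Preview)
Your proof is correct and follows essentially the same approach as the paper: use the linear isomorphism sending the basis $x_1,\dots,x_n$ of $L$ to the standard basis $e_1,\dots,e_n$, note that this is a homeomorphism of $\R^n$ carrying $L$ to $\Z^n$, and conclude from the discreteness of $\Z^n$. The paper's version is terser (it omits the verification that $\Z^n$ is discrete and the homogeneity remark), but the idea is identical.
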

\begin{proof}
  Let $x_1,\dots, x_n$ be a basis of $L$, and $e_1,\dots, e_n$ be the
  standard basis for $\R^n$. The linear map sending $x_i$ to $e_i$ (for
  $i = 1,\dots, n$) is an homeomorphism from $\R^n$ to itself, which
  sends $L$ to $\Z^n$. Thus $L$ is discrete, because $\Z^n$ is.
\end{proof}

Using the same argument, we see that the set $L \setminus \left\{ 0
\right\}$ is closed in $\R^n$. It follows that the norm function $\norm{\cdot}$
takes a minimum on $L \setminus \left\{ 0 \right\}$.

\begin{definition}[Systole]
  For a Euclidean lattice $L$ we define its \emph{systole} as the
  (positive) real number 
  \begin{align*}
    \syst(L) &= \min_{x \in L \setminus \left\{ 0 \right\} } \norm{x}.
  \end{align*}
\end{definition}



\begin{prop}
  For any lattice $L \subset \R^n$ we have:
  \begin{align*}
    \syst(L)  \le 2 \left( \frac{\covol(L)}{\nu_n}
    \right)^\frac{1}{n},
  \end{align*}
  where $\nu_n$ is the volume of the $n$-dimensional ball of radius $1$.
  \label{prop:Herm-Mink}
\end{prop}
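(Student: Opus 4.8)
The plan is to use a volume-packing argument based on Lemma~\ref{vol-F-max}. Set $r = \tfrac{1}{2}\syst(L)$. The key observation is that the open balls of radius $r$ centred at the points of $L$ are pairwise disjoint: if two such balls $B(x,r)$ and $B(y,r)$ with $x \neq y$ in $L$ intersected, the triangle inequality would give $\norm{x-y} < 2r = \syst(L)$, contradicting the definition of the systole since $x - y \in L \setminus \{0\}$. Equivalently, writing $S = B(0,r)$ for the open ball of radius $r$ about the origin, the translates $x + S$ for $x \in L$ are mutually disjoint.

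Now I would apply Lemma~\ref{vol-F-max} to this set $S$. It gives $\vol(S) \le \covol(L)$. But $\vol(S) = \vol\bigl(B(0,r)\bigr) = \nu_n r^n$, since the volume of a ball of radius $r$ in $\R^n$ scales as $r^n$ times the volume $\nu_n$ of the unit ball. Therefore $\nu_n r^n \le \covol(L)$, which rearranges to $r \le \bigl(\covol(L)/\nu_n\bigr)^{1/n}$, and multiplying by $2$ yields
\begin{align*}
  \syst(L) = 2r \le 2\left(\frac{\covol(L)}{\nu_n}\right)^{1/n},
\end{align*}
as claimed.

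There is essentially no hard part here: the argument is a direct combination of the geometric remark that a ball of radius $r = \tfrac12\syst(L)$ tiles disjointly under $L$-translation with the volume inequality already established in Lemma~\ref{vol-F-max}. The only small point worth a sentence is that $S$ should be taken as the \emph{open} ball of radius $r$ (or one checks that the bounding sphere has measure zero, so it does not matter), so that the disjointness is strict and Lemma~\ref{vol-F-max} applies verbatim; and one should note that $S$ is bounded, hence measurable, so the hypothesis of Lemma~\ref{vol-F-max} is met.
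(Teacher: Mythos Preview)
Your proof is correct and follows essentially the same packing argument as the paper, applying Lemma~\ref{vol-F-max} to an open ball whose $L$-translates are disjoint. The only cosmetic difference is that the paper phrases it via a maximal radius $R$ and then shows $\syst(L)\le 2R$ by a limiting argument, whereas you go directly with $r=\tfrac{1}{2}\syst(L)$; your route is slightly more streamlined but the substance is identical.
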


\begin{proof}
  Let $B_R(0)$ be an open ball around $0$ such all  translated $x +
  B_R(0)$ are mutually disjoint, and choose $R$ to be maximal for this property. 
  Then by Lemma~\ref{vol-F-max}, we have $R^n \nu_n \le \covol(L)$.
  By maximality of $R$, for any $\varepsilon > 0$ there exists $x \in L$
  such that $B_{R+\varepsilon}(0)$ and $B_{R+\varepsilon}(x)$
  intersect. By the triangular inequality, this implies $\norm{x} \le 2 (R
  + \varepsilon)$. Letting $\varepsilon \to 0$, we obtain $x \in L$
  with $\norm{x} \le 2R$. 
\end{proof}

\section{Topological description}

Using topology, one can give an equivalent definition for the notion of 
Euclidean lattice. This will motivate the consideration of lattices as
subgroups of a group $G$ more general than $\R^n$ (see the next chapter). 
First, let recall some basic elements useful to study the 
interplay between group theory and topology. 

\begin{definition}
  \label{def:top-group}
  A \emph{topological group} $G$ is a group whose underlying set is a
  topological space such that the map $f: G \times G \to G$ given 
  by $f(x, y) = x y^{-1}$ is continuous. 
\end{definition}

\begin{rmk}
    From the definition, it follows that the inverse map $x \mapsto
    x^{-1}$ is continuous, and  for any element $a \in G$ the (left)
    multiplication map $\mu_a: x \mapsto a x$ is continuous (and
    similarly for right multiplication). 
\end{rmk}

\begin{rmk}
    A subgroup $H \subset G$ of a topological group $G$ is again a
    topological group, with respect to the induced topology.
\end{rmk}

\begin{example}
  The additive group $G = (\R^n, +)$  is a topological group with
  respect to the real topology on $\R^n$: the map $f$ is given by 
  $f(x, y) = x - y$ and is obviously continuous.
\end{example}



Let $H \subset G$ be a subgroup of a topological group $G$. Not only $H$
possesses a natural topology determined by $G$, but so does the
quotient set $G/H$ as well:
\begin{definition}
  The \emph{quotient topology} on $G/H$ is defined as the finest 
  topology making the projection map $\pi: G \to G/H$ continuous. 
\end{definition}

\begin{rmk}
        \label{rmk:open-in-quotient}
It follows from the definition that a subset $A \subset G/H$ is open  
if and only the preimage $\pi^{-1}(A)$ is open in $G$.
\end{rmk}

\begin{lemma}
        \label{lemma:proj-open}
        The projection map $\pi: G \to G/H$ is an open map 
        (it sends open sets onto open sets)
\end{lemma}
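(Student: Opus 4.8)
The plan is to show that for any open set $U \subseteq G$, the image $\pi(U)$ is open in $G/H$. By Remark~\ref{rmk:open-in-quotient}, this is equivalent to showing that $\pi^{-1}(\pi(U))$ is open in $G$. So the first step is to identify this preimage explicitly: a point $g \in G$ lies in $\pi^{-1}(\pi(U))$ precisely when $gH = uH$ for some $u \in U$, i.e.\ when $g \in uH$ for some $u \in U$, i.e.\ when $g \in Uh$ for some $h \in H$. Hence $\pi^{-1}(\pi(U)) = \bigcup_{h \in H} Uh$.

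Next I would invoke the remark following Definition~\ref{def:top-group}: for each fixed $h \in H$, right multiplication by $h$ is a continuous map $G \to G$, and since right multiplication by $h^{-1}$ is its continuous inverse, it is in fact a homeomorphism of $G$. Therefore $Uh$ is open in $G$ for every $h \in H$. Consequently $\pi^{-1}(\pi(U)) = \bigcup_{h \in H} Uh$ is a union of open sets, hence open in $G$. Applying Remark~\ref{rmk:open-in-quotient} once more, $\pi(U)$ is open in $G/H$, which is exactly the assertion.

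There is no real obstacle here; the only point requiring a little care is the set-theoretic computation of $\pi^{-1}(\pi(U))$, where one must be careful not to confuse left cosets with right cosets (the fibers of $\pi$ are the left cosets $gH$, so the saturation of $U$ is the union of the right translates $Uh$, not $hU$). Everything else is a direct application of the facts already recorded: the characterization of open sets in the quotient topology and the continuity of the translation maps in a topological group.
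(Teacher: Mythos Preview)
Your proof is correct and follows essentially the same approach as the paper: reduce to showing $\pi^{-1}(\pi(U))$ is open via Remark~\ref{rmk:open-in-quotient}, identify this preimage as $UH = \bigcup_{h\in H} Uh$, and observe that each right translate $Uh$ is open. The paper's proof is simply a terser version of what you wrote.
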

\begin{proof}
        For $U \subset G$ open, we have to show that $\pi(U)$ is open.  
        But by Remark~\ref{rmk:open-in-quotient} this is the case exactly when
        $\pi^{-1}(\pi(U))$ is open.  The latter is equal to $U H = \bigcup_{h \in H} U h$,
        which is open since each $U h$ is open.
\end{proof}

  

With the definition of the quotient topology we have enough information to state 
the topological characterization of Euclidean lattices: 
\begin{prop}
  \label{prop:eucl-lattice-topol}
        A subgroup $L \subset \R^n$ is a Euclidean lattice if and only if  
        \begin{enumerate}
                \item $\R^n / L$ is compact;
                \item and $L$ is discrete.
        \end{enumerate}
\end{prop}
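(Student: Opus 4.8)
The proof splits into the two implications, the converse being the substantial one, and I would handle each in turn.

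\emph{Lattice $\Rightarrow$ discrete and cocompact.} If $L$ is as in \eqref{eq:Eucl-lattice}, discreteness is exactly Lemma~\ref{lemma:L-discrete}. For compactness of $\R^n/L$, I would note that the closure $\overline{\F}$ of the parallelotope \eqref{eq:fund-dom-eucl} is compact (it is closed and bounded in $\R^n$), and that $\pi(\overline{\F}) = \R^n/L$ since already $\bigcup_{x \in L}(x+\F) = \R^n$. Hence $\R^n/L$ is a continuous image of a compact set, so it is compact.

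\emph{Discrete and cocompact $\Rightarrow$ lattice.} Suppose $L$ is discrete with $\R^n/L$ compact. The heart of the matter is the structure statement that any discrete subgroup $L \subset \R^n$ is free abelian with a $\Z$-basis consisting of $\R$-linearly independent vectors. To obtain it, let $V = \mathrm{span}_\R(L)$, put $k = \dim V$, choose $\R$-linearly independent $e_1, \dots, e_k \in L$, and set $L' = \Z e_1 \oplus \cdots \oplus \Z e_k \subseteq L$. Reducing coordinates modulo $1$ shows that every coset of $L'$ in $V$ — in particular every coset of $L'$ in $L$ — has a representative in the compact parallelotope $P = \{\sum a_i e_i : a_i \in [0,1]\}$; since $L$ is discrete, $P \cap L$ is finite, so $[L : L'] < \infty$. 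Thus $L$ is finitely generated and torsion-free, hence free, of the same rank $k$ as $L'$; and since $L \supseteq L'$ still spans the $k$-dimensional space $V$, any $\Z$-basis $y_1, \dots, y_k$ of $L$ is $\R$-linearly independent.

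It then remains to rule out $k < n$. If $V \subsetneq \R^n$, then because $L \subseteq V$ the quotient map $\R^n \to \R^n/V$ factors through $\pi$, producing a continuous surjection from the compact space $\R^n/L$ onto $\R^n/V \cong \R^{\,n-k}$, which is not compact — a contradiction. Therefore $k = n$, and $L = \Z y_1 \oplus \cdots \oplus \Z y_n$ with $y_1, \dots, y_n$ a basis of $\R^n$, i.e.\ $L$ is a Euclidean lattice. The only genuinely delicate point is the structure statement: discreteness is used precisely through the finiteness of $P \cap L$, and one must take care that ``finitely generated torsion-free abelian $\Rightarrow$ free'' together with ``$L$ spans a $k$-dimensional subspace'' really do force the $\Z$-basis to be $\R$-independent; the topological argument ruling out $k<n$ is then short. (Alternatively one could invoke Remark~\ref{rmk:eucl-lattice-equiv-def} and verify directly that $L$ spans $\R^n$ and has rank $n$, but this needs essentially the same input.)
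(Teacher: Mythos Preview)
Your forward direction matches the paper's proof exactly: discreteness via Lemma~\ref{lemma:L-discrete}, compactness via $\pi(\overline{\F}) = \R^n/L$ with $\overline{\F}$ compact.

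For the converse, the paper does not give an argument at all --- it simply refers to \cite[Theorems~6.1 and~6.6]{SteTall}. You instead supply a self-contained proof: the structure theorem for discrete subgroups of $\R^n$ (via a finite-index full-rank sublattice $L'$ and the pigeonhole in the parallelotope $P$), followed by the short topological step ruling out $k<n$ using the surjection $\R^n/L \twoheadrightarrow \R^n/V \cong \R^{n-k}$. Your argument is correct; the one point worth making explicit is why $P \cap L$ is finite: an infinite set in the compact $P$ would accumulate, and differences of nearby points of $L$ would contradict discreteness at $0$ --- so the group structure is doing real work here (compare the counterexample $\{1/n\}$ in $\R$). What you gain over the paper's treatment is a genuinely self-contained exposition; what the paper gains is brevity, at the cost of outsourcing the structure theorem.
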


\begin{proof}
  We prove the easy direction of the implication: let $L \subset \R^n$
  be a Euclidean lattice. We have already seen in
  Lemma~\ref{lemma:L-discrete} that $L$ is discrete.
  Consider the closure $\overline{\F}$ of some fundamental domain for
  $L$. If $\pi: \R^n \to \R^n/L$ denotes the projection map, we can
  check directly that $\pi(\overline{\F}) = \R^n/L$.  But $\overline{\F}$
  is compact, so that $\R^n/L$ is compact.

  For the other direction we refer to \cite[Theorems 6.1 and
  6.6]{SteTall} (note that the definition of ``lattices'' from
  \cite{SteTall} allows $L$ to have rank $< n$). 
\end{proof}

\begin{rmk}
  \label{rmk:lattice-2-opposed-conditions}
  The second condition in Proposition~\ref{prop:eucl-lattice-topol}
  indicates that $L$ is relatively small compared to its ambient 
  space $\R^n$. On the other hand, the first condition shows that $L$
  still ``captures'' much of $\R^n$. These two opposite aspects also
  appear for the conditions given in
  Remark~\ref{rmk:eucl-lattice-equiv-def}.
\end{rmk}

\begin{rmk}
  \label{rmk:Eucl-lattice-torus}
  Consider the special case $n = 1$ and $L = \Z \subset \R$. Then the
  quotient $\R/\Z$ is homeomorphic to the circle $S^1$, which is compact. 
  More generally, for a lattice $L \subset \R^n$ the quotient $\R^n/L$
  is homeomorphic to the $n$-torus $S^1 \times \cdots \times S^1$.
  See \cite[Theorem 6.4]{SteTall}.
\end{rmk}

\section{The space of Euclidean lattices}

For some fixed $n$, let $X$ denote the set of lattices in $\R^n$. 
The group $\GL_n(\R)$ acts on $X$ on the left: any $g \in \GL_n(\R)$
transforms a lattice $L$ into a lattice $gL$.
Moreover, this action is transitive as any lattice $L$ can be written $L
= g L_0$ for some $g \in \GL_n(\R)$ and $L_0 = \Z^n$. The stabilizer of
$L_0$ is the subgroup $\GL_n(\Z)$ (matrices with entries in $\Z$ and
determinant $\pm 1$). This gives an identification 
\begin{align}
  X &= \GL_n(\R) / \GL_n(\Z).
  \label{eq:X-GL}
\end{align}
Using this identification we can easily put a topology on $X$, namely
the quotient topology, where the topology on $\GL_n(\R)$ is the usual
one -- induced as a subset of the real vector space $\mathrm{Mat}(n \times n,
\R)$. 

\begin{prop}
  Both functions $\covol(\cdot)$ and $\syst(\cdot)$ are continuous on
  the space $X$.
\end{prop}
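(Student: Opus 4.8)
The plan is to exploit the defining property of the quotient topology on $X = \GL_n(\R)/\GL_n(\Z)$: by Remark~\ref{rmk:open-in-quotient}, a map $f \colon X \to \R$ is continuous if and only if the composition $f \circ \pi$ with the projection $\pi \colon \GL_n(\R) \to X$ is continuous (for open $V \subset \R$ one has $\pi^{-1}(f^{-1}(V)) = (f\circ\pi)^{-1}(V)$, so $f^{-1}(V)$ is open in $X$ exactly when $(f\circ\pi)^{-1}(V)$ is open in $\GL_n(\R)$). So in each case it suffices to produce a continuous function on $\GL_n(\R)$ that is invariant under right multiplication by $\GL_n(\Z)$ and represents the function in question under the identification \eqref{eq:X-GL}.

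For the covolume this is essentially immediate: by \eqref{eq:Gramm} we have $\covol(g\L) = |\det(g)|$, and $g \mapsto |\det(g)|$ is continuous on $\GL_n(\R) \subset \mathrm{Mat}(n \times n, \R)$. It is right-$\GL_n(\Z)$-invariant because every $\gamma \in \GL_n(\Z)$ has $|\det(\gamma)| = 1$, so it descends to $\covol$ on $X$.

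For the systole, set $\sigma(g) = \syst(g\L) = \min_{v \in \Z^n \setminus \{0\}} \norm{g v}$. Since each $\gamma \in \GL_n(\Z)$ permutes $\Z^n \setminus \{0\}$, the function $\sigma$ is right-$\GL_n(\Z)$-invariant, so everything reduces to showing that $\sigma \colon \GL_n(\R) \to \R$ is continuous. Here the main obstacle is that $\sigma$ is an infimum of the infinitely many continuous functions $g \mapsto \norm{g v}$, which only yields upper semicontinuity for free; the key point is that near a given $g_0$ the minimum is attained over a fixed finite set of vectors. Concretely, fix $g_0$ and put $m = \sigma(g_0) > 0$. Since $g \mapsto g^{-1}$ is continuous on $\GL_n(\R)$, there is a neighborhood $U$ of $g_0$ and a constant $c > 0$ with $\norm{g v} \ge c\norm{v}$ for all $g \in U$ and $v \in \R^n$ (bound the operator norm of $g^{-1}$ on $U$ and use $\norm{v} = \norm{g^{-1}(g v)}$). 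Let $v_0 \in \Z^n \setminus \{0\}$ realize $\sigma(g_0)$; after shrinking $U$ we may assume $\norm{g v_0} \le 2m$ for $g \in U$. Then for $g \in U$ the minimum $\sigma(g)$ is attained at some $v$ with $c\norm{v} \le \norm{g v} \le \norm{g v_0} \le 2m$, that is $\norm{v} \le 2m/c$. Hence on $U$ we have $\sigma(g) = \min_{v \in F} \norm{g v}$, where $F = \{v \in \Z^n \setminus \{0\} : \norm{v} \le 2m/c\}$ is finite, nonempty (it contains $v_0$) and independent of $g \in U$; being a minimum of finitely many continuous functions, $\sigma$ is continuous on $U$, hence continuous at $g_0$, and therefore descends to a continuous function on $X$.
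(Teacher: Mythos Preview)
Your proof is correct and follows essentially the same approach as the paper: lift the function to $\GL_n(\R)$, check continuity there, and descend to $X$. The only differences are that you invoke the universal property of the quotient topology (Remark~\ref{rmk:open-in-quotient}) where the paper uses openness of $\pi$ (Lemma~\ref{lemma:proj-open}), and you supply a complete argument for the continuity of $g \mapsto \syst(g\L)$ on $\GL_n(\R)$, which the paper simply asserts without proof.
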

\begin{proof}
 Denoting by $\pi: \GL_n(\R) \to X$ the projection map, we have that the 
 map $f = \covol \circ \pi$ is given by $f(g) = |\det(g)|$. It is obviously
 continuous on $\GL_n(\R)$. For $U \subset \R$ an open set, the preimage 
 $f^{-1}(U)$ is thus open. But $\pi$ is open
 (Lemma~\ref{lemma:proj-open}), 
 so that $\pi(f^{-1}(U)) = \covol^{-1}(U)$ also is open; this proves 
 the continuity of the covolume on $X$. 

 The proof for the systole follows the same argument, noting that $f(g) = \syst(g L_0)$ 
 is continuous on $\GL_n(\R)$.
\end{proof}

Our goal is to prove Mahler's compactness theorem (below), which will 
be an important ingredient later during the course. For this we need the following fact 
about the geometry of lattices.

\begin{lemma}
  \label{lemma:bounded-basis}
  Let $a > 1$. There exists a constant $C(n,a)$ such that any lattice $L \subset \R^n$
  with $\syst(L) > 1/a$ and $\covol(L) < a$ has
  a basis $x_1, \dots, x_n$ with $\norm{x_i} \le C(n,a)$ for $i=1,\dots,n$.
\end{lemma}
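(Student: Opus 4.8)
The plan is to construct a basis of $L$ layer by layer, by the classical reduction procedure: pick a shortest vector, project onto its orthogonal complement, iterate, and control sizes at each stage with the Hermite--Minkowski bound of Proposition~\ref{prop:Herm-Mink}. Concretely, I would build $x_1,\dots,x_n\in L$ so that for every $k$ the set $\{x_1,\dots,x_k\}$ is a $\Z$-basis of $L\cap V_k$, where $V_k=\R x_1\oplus\cdots\oplus\R x_k$; for $k=n$ this says precisely that $x_1,\dots,x_n$ is a basis of $L$.

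For the base case, let $x_1$ realise $\syst(L)$; a shortest nonzero lattice vector is primitive (otherwise $\tfrac1m x_1$ would be shorter), so $L\cap\R x_1=\Z x_1$, and Proposition~\ref{prop:Herm-Mink} gives $\norm{x_1}=\syst(L)<2(a/\nu_n)^{1/n}$. For the inductive step, suppose $x_1,\dots,x_k$ have been chosen ($k<n$), let $W:=V_k^{\perp}$ and let $p\colon\R^n\to W$ be the orthogonal projection. Since $L\cap V_k=\Z x_1\oplus\cdots\oplus\Z x_k$ is a full lattice in $V_k$, the kernel of $p|_L$ is exactly $L\cap V_k$, and $\overline L:=p(L)$ is a full, discrete lattice in $W\cong\R^{n-k}$ (discreteness: a sequence in $L$ whose projections tend to $0$ may be shifted by elements of $L\cap V_k$ into a fixed bounded set, hence is eventually constant by Lemma~\ref{lemma:L-discrete}). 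I then take $\overline x_{k+1}$ to be a shortest nonzero vector of $\overline L$ and lift it to $x_{k+1}\in L$; since $p^{-1}(\overline x_{k+1})\cap L$ is a coset of $L\cap V_k$, I may choose the lift with $V_k$-component in the centred parallelotope $\{\sum a_i x_i : a_i\in[-\tfrac12,\tfrac12]\}$, whence $\norm{x_{k+1}}\le\tfrac12(\norm{x_1}+\cdots+\norm{x_k})+\syst(\overline L)$. Finally $\overline x_{k+1}$ is primitive in $\overline L$ (again as a shortest vector), and a one-line diagram chase through $p$ shows that $\{x_1,\dots,x_{k+1}\}$ is a $\Z$-basis of $L\cap V_{k+1}$, so the induction continues.

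The crux is to bound $\syst(\overline L)$ by a constant depending only on $n$ and $a$; granting this, the displayed recursion, solved and then maximised over $k=1,\dots,n$, produces the constant $C(n,a)$. Here I would use that covolume is multiplicative along the flag, $\covol(L)=\covol(L\cap V_k)\cdot\covol(\overline L)$, which follows by choosing a $\Z$-basis of $L$ adapted to the (primitive) sublattice $L\cap V_k$ and computing the determinant in an orthonormal basis adapted to $V_k\oplus W$. Applying Proposition~\ref{prop:Herm-Mink} to $L\cap V_k$ inside $V_k\cong\R^k$, and using $\syst(L\cap V_k)\ge\syst(L)>1/a$, yields the lower bound $\covol(L\cap V_k)\ge\nu_k/(2a)^k$, hence $\covol(\overline L)<a\,(2a)^k/\nu_k$. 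Applying Proposition~\ref{prop:Herm-Mink} a second time, now to $\overline L$ inside $\R^{n-k}$, turns this into $\syst(\overline L)<2\bigl(a(2a)^k/(\nu_k\nu_{n-k})\bigr)^{1/(n-k)}$, a bound in terms of $n,a,k$ only.

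The only genuine obstacle is this last point. A naive projection argument fails because orthogonal projection can shrink lattice vectors arbitrarily, so one cannot appeal to a false inequality such as $\syst(\overline L)\ge\syst(L)$; it is the combination of covolume multiplicativity with two applications of Hermite--Minkowski — once to push the covolume of the ``small'' layer $L\cap V_k$ away from $0$, once to convert the resulting covolume bound on $\overline L$ back into a systole bound — that repairs this. Along the way I should also verify the routine but necessary point that $L\cap V_k$ and $\overline L$ are genuine full-rank Euclidean lattices (in $V_k$, resp. $W$, with the induced scalar products), so that Proposition~\ref{prop:Herm-Mink} really applies to them.
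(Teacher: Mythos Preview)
Your argument is correct and follows the same overall architecture as the paper's proof: pick a shortest vector, project orthogonally, control the projected lattice, and lift a basis back with bounded distortion. The paper packages this as an induction on $n$ (project along a single shortest vector $v$, apply the inductive hypothesis to $L'=p(L)\subset v^\perp$), whereas you unroll the same recursion into an explicit flag $V_1\subset V_2\subset\cdots$.

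The one substantive difference is how the projected lattice is controlled. The paper uses the geometric fact that projecting along a \emph{shortest} vector cannot shrink the systole by more than a factor $\tfrac{\sqrt{3}}{2}$, i.e.\ $\syst(L')\ge\tfrac{\sqrt{3}}{2}\syst(L)$ (stated as an exercise), and this lower bound is exactly what is needed to feed $L'$ back into the inductive hypothesis. You sidestep this lemma entirely: you never need a lower bound on $\syst(\overline L)$, only an upper bound, and you obtain it by first bounding $\covol(L\cap V_k)$ away from zero via Hermite--Minkowski and then pushing this through covolume multiplicativity and a second application of Hermite--Minkowski. So your remark that ``one cannot appeal to $\syst(\overline L)\ge\syst(L)$'' is slightly too pessimistic---with the $\tfrac{\sqrt{3}}{2}$ factor it \emph{is} available, and that is precisely the paper's route---but your covolume workaround is a perfectly valid and self-contained alternative that trades one nontrivial geometric lemma for two invocations of Proposition~\ref{prop:Herm-Mink}.
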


\begin{proof}
   The proof proceeds by induction. If $n=1$, the result is clear for $C(1,a) = a$. 
   Suppose the result correct for $n-1$, and let $L \subset \R^n$ be a lattice with 
   $\syst(L) > 1/ a$ and $\covol(L) < a$. Let us choose a nonzero $v \in L$ of minimal 
   length, i.e., with $\norm{v} = \syst(L)$. We consider $V = v^\perp$, the real subspace 
   orthogonal to $v$, with the orthogonal projection $p: \R^n \to V$. The projection
   $L' = p(L)$ is a lattice of $V$ with $\syst(L') \ge \frac{\sqrt{3}}{2} \syst(L)$ (exercise). 
   Thus, $\syst(L') > \frac{\sqrt{3}}{2a}$.
   Moreover, one has $\covol(L) = \norm{v} \covol(L')$ and so $\covol(L') < a^2$.

   By the induction assumption, there exists a basis $x_1', \dots, x_{n-1}'$ of $L'$ such that
   $\norm{x_i'} \le C(n-1, \frac{2}{\sqrt{3}} a^2)$. 
   By definition of $L'$, any $x_i'$ is obtained as the projection $p(x_i)$ for some $x_i \in L$. 
   Moreover, by adding an integral multiple of $v$, we can choose $x_i$ such that its orthogonal
   projection on the line $\R v$ has norm less than $\norm{v}$.
   But by Proposition~\ref{prop:Herm-Mink}, the systole $\norm{v}$ can be bounded by some constant $D(n,a)$ that 
   depends only on $n$ and $a$.  By the triangular inequality, we have
   $\norm{x_i} \le D(n,a) + C(n-1, \frac{2}{\sqrt{3}}a^2)$, and we choose this
   upper bound to be the constant $C(n,a)$. 

   We show that for $x_n = v$, the vectors $x_1,\dots, x_n$ form a
   basis of $L$. The linear independence is clear, and it remains to
   show that these vectors generates $L$ over $\Z$. Let $x \in L$, and
   write $p(x) = \sum_{i=1}^{n-1} \lambda_i x_i'$, with $\lambda_i \in
   \Z$. Then for $y = x - \sum_{i=1}^{n-1} \lambda_i x_i$  we have $p(y)
   = 0$, and thus $y$ is contained in $L \cap \R x_n$. But since
   $\syst(L) = \norm{x_n}$, one has necessarily $y = \lambda_n x_n$ for
   some $\lambda_n \in \Z$. This gives $x = \sum_{i=1}^n \lambda_i x_i$,
   with integer coefficients.
\end{proof}

Finally we state and prove the main theorem of this chapter. Recall that a subset $A \subset X$ is 
called \emph{relatively compact} if its closure $\overline{A}$ is compact.

\begin{theorem}[Mahler's compactness criterion]
  \label{thm:Mahler}
  A subset $A \subset X$ is relatively compact if and only if both
  $\covol(\cdot)$ and $\syst(\cdot)^{-1}$ are bounded on~$A$.
\end{theorem}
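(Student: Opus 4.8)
The plan is to prove both directions using the identification $X = \GL_n(\R)/\GL_n(\Z)$ together with the geometric input from Lemma~\ref{lemma:bounded-basis}. For the easy direction, suppose $A$ is relatively compact, so $\overline{A}$ is compact. Since $\covol(\cdot)$ and $\syst(\cdot)$ are continuous on $X$ (proven above), they attain a maximum and a minimum on $\overline{A}$; the minimum of $\syst(\cdot)$ is positive because the systole is always strictly positive. Hence $\covol(\cdot)$ is bounded above and $\syst(\cdot)$ is bounded below by a positive constant on $\overline{A} \supset A$, which is exactly the assertion that $\covol(\cdot)$ and $\syst(\cdot)^{-1}$ are bounded on $A$.

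For the interesting direction, assume $\covol(\cdot) < a$ and $\syst(\cdot) > 1/a$ on $A$, for some $a > 1$. By Lemma~\ref{lemma:bounded-basis}, every $L \in A$ has a $\Z$-basis $x_1(L), \dots, x_n(L)$ with $\norm{x_i(L)} \le C(n,a)$. Writing $L = g_L L_0$ where $g_L \in \GL_n(\R)$ is the matrix whose columns are these basis vectors, we see that all the $g_L$ lie in the set $K$ of matrices in $\mathrm{Mat}(n\times n, \R)$ all of whose columns have norm $\le C(n,a)$, which is a bounded, hence relatively compact, subset of $\mathrm{Mat}(n\times n,\R)$. Actually I want the image to lie in $\GL_n(\R)$ and to be compact there; the cleaner route is to take $\overline{K}$ inside $\mathrm{Mat}(n\times n,\R)$ and intersect with a constraint that keeps determinants away from $0$: since $|\det(g_L)| = \covol(L)$ lies in the interval $[\syst(L)^n \nu_n / \, ?, a]$ — more simply, since each $g_L$ has a column of norm $\ge \syst(L) > 1/a$ and, using $\covol(L) > (\syst(L)/2)^n \nu_n$ rearranged from Proposition~\ref{prop:Herm-Mink}, the determinant is bounded below away from $0$ — the set $\{g_L : L \in A\}$ has compact closure $\widetilde K$ contained in $\GL_n(\R)$.

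Now let $\pi : \GL_n(\R) \to X$ be the projection. We have $A \subset \pi(\{g_L : L \in A\}) \subset \pi(\widetilde K)$. Since $\widetilde K$ is compact and $\pi$ is continuous, $\pi(\widetilde K)$ is compact; in particular it is closed if $X$ is Hausdorff, so $\overline{A} \subset \pi(\widetilde K)$ is a closed subset of a compact set, hence compact. (If one prefers to avoid invoking Hausdorffness of $X$, one can instead note directly that $\overline{A}$, being contained in the compact set $\pi(\widetilde K)$, is compact: a closed subset of a compact space is compact regardless of separation axioms, and $\overline{A}$ is closed in $X$ hence closed in $\pi(\widetilde K)$.) Therefore $A$ is relatively compact.

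The main obstacle is the bookkeeping in the reverse direction: one must upgrade the norm bound on basis vectors supplied by Lemma~\ref{lemma:bounded-basis} into a genuinely \emph{compact} (not merely bounded) family of matrices inside $\GL_n(\R)$, which requires a lower bound on $|\det(g_L)|$ so that the closure does not touch the non-invertible matrices. That lower bound comes from combining the hypothesis $\covol(L) < a$ with the reverse estimate $\covol(L) \ge (\syst(L)/2)^n \nu_n$ extracted from Proposition~\ref{prop:Herm-Mink}, together with $\syst(L) > 1/a$. Once the family of representatives is known to be relatively compact in $\GL_n(\R)$, continuity and openness of $\pi$ finish the argument immediately.
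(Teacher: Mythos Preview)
Your proof is correct and follows essentially the same route as the paper: continuity of $\covol$ and $\syst$ for the forward direction, and for the converse, lifting each $L$ to a matrix $g_L$ with bounded columns via Lemma~\ref{lemma:bounded-basis}, using Proposition~\ref{prop:Herm-Mink} to bound $|\det(g_L)|$ away from zero so the closure stays inside $\GL_n(\R)$, and then projecting the resulting compact set back to $X$. The only cosmetic difference is that the paper first extends the boundedness hypotheses to $\overline{A}$ by continuity and applies the lemma there, whereas you apply it on $A$ and take closures at the end; both versions are fine.
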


\begin{proof}
  Let us write $f(L) = \syst(L)^{-1}$. If $A$ is relatively compact,
  then both $f(\overline{A})$ and $\covol(\overline{A})$ are compact
  subset of $\R$, so that $f(A)$ and $\covol(A)$ are bounded. 

  For the other direction, let us suppose $f$ and $\covol$ bounded on
  $A$. Then $f$ and $\covol$ are also bounded on the closure $\overline{A}$.
  By Lemma \ref{lemma:bounded-basis}, for each $L \in \overline{A}$ we can choose 
  a basis $x_1,\dots, x_n$ with the norms $\norm{x_i}$ bounded by some constant $C$ 
  (which does not depend on $L$). Denoting $g_L \in \GL_n(\R)$ the matrix with $x_1,\dots, x_n$ 
  as column vectors, we have $\pi(g_L) = L$. Let $B = \left\{ g_L \in \GL_n(\R) \, | \, L \in \overline{A} \right\}$, 
  so that $\pi(B) = \overline{A}$. From Proposition~\ref{prop:Herm-Mink} and the hypothesis on the systole,
  we see that $|\det(g_L)| = \covol(L)$ is bounded away from $0$ on $B$. Thus, the closure $\overline{B}$
  is included in $\GL_n(\R)$. Since $B$ is bounded, $\overline{B}$ is compact. We obtain that
  the closed set $\overline{A}$ is contained in the compact set $\pi(\overline{B})$, 
  which implies that $A$ is relatively compact.
\end{proof}

\begin{rmk}
  \label{rmk:compact-Hausdorff}
 For some authors, and in the French terminology, the definition of 
 ``compact'' for a space $X$ requires that $X$ is Hausdorff (as opposed
 to the term ``quasicompact'' for $X$ that has the Heine-Borel property
 but does not need to  be Hausdorff). 
 We have not worried about this, but we will show in the next chapter
 that the quotients we have considered (namely $\R^n/L$ and
 $\GL_n(\R)/\GL_n(\Z)$) are indeed Hausdorff. See also
 Remark~\ref{rmk:Eucl-lattice-torus}.
\end{rmk}


\chapter{Lattices in topological groups}
\label{ch:lattices-topol-gps}

In this chapter we define the notion of a lattice in a 
topological group $G$. This will generalized the notion of Euclidean
lattice seen in the previous chapter (case $G = \R^n$). 

\section{Discrete subgroups and quotients}

In this section we will suppose that the topology on the group $G$ comes
from a metric. This assumption will simplify the proofs of the results
below (they still hold under weaker assumptions).  Note that $G = \R^n$
and $G = \GL_n(\R)$ are metric space -- and all topological groups that
we are going to consider will be metric as well.  We denote by $\Gamma
\subset G$ some discrete subgroup. 

\begin{lemma}
  \label{lemma:discrete-implies-closed}
  The subset $\Gamma$ is closed in $G$. 
\end{lemma}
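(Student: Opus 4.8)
The plan is to show that the complement $G \setminus \Gamma$ is open, or equivalently that every point of the closure $\overline{\Gamma}$ already lies in $\Gamma$. Since the topology on $G$ comes from a metric $d$, I can work with sequences: if $x \in \overline{\Gamma}$, there is a sequence $(\gamma_k)$ in $\Gamma$ with $\gamma_k \to x$. The goal is to conclude $x \in \Gamma$.

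First I would use that $\Gamma$ is discrete to extract a positive separation constant at the identity. Discreteness means $\{e\}$ is open in $\Gamma$, so there is some $\varepsilon > 0$ such that the ball $B_\varepsilon(e)$ meets $\Gamma$ only in $e$. Next I would exploit the group structure together with continuity of the map $f(x,y) = xy^{-1}$ (equivalently, continuity of multiplication and inversion, as recorded in the remark after Definition~\ref{def:top-group}): for the convergent sequence $(\gamma_k)$, the differences $\gamma_k \gamma_l^{-1}$ tend to $xx^{-1} = e$ as $k, l \to \infty$, so $(\gamma_k)$ is what one might call a Cauchy sequence in $\Gamma$ with respect to the group structure. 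Concretely, for $k, l$ large enough we have $\gamma_k \gamma_l^{-1} \in B_\varepsilon(e)$, hence $\gamma_k \gamma_l^{-1} = e$, i.e.\ $\gamma_k = \gamma_l$. So the sequence is eventually constant, equal to some fixed $\gamma \in \Gamma$, and therefore $x = \gamma \in \Gamma$.

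The one point requiring a little care — and the place I'd expect the only real friction — is passing from ``$\gamma_k \to x$'' to ``$\gamma_k \gamma_l^{-1} \to e$ as $k,l \to \infty$ jointly.'' This is where one genuinely uses that multiplication is continuous (not merely separately continuous, or one uses the metric directly): the map $(x,y) \mapsto xy^{-1}$ is continuous at $(x,x)$, so given the neighborhood $B_\varepsilon(e)$ of $e = xx^{-1}$ there is a neighborhood $U$ of $x$ with $U U^{-1} \subset B_\varepsilon(e)$; then for $k,l$ large enough both $\gamma_k$ and $\gamma_l$ lie in $U$, giving $\gamma_k \gamma_l^{-1} \in B_\varepsilon(e)$. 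From there the discreteness kicks in as above. Everything else is routine, so the proof is short; the metric hypothesis on $G$ is convenient for phrasing things sequentially but, as the text already notes, is not essential.
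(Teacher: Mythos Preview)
Your proof is correct and follows essentially the same approach as the paper: take a sequence $(\gamma_k)$ in $\Gamma$ converging to some $g \in G$, use continuity of $(x,y)\mapsto xy^{-1}$ to see that products $\gamma_k\gamma_l^{-1}$ (the paper uses the consecutive case $\gamma_n\gamma_{n+1}^{-1}$) converge to $e$, and then invoke discreteness to conclude the sequence is eventually constant. Your write-up is a bit more explicit about the joint continuity step, but the argument is the same.
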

\begin{proof}
  Since $G$ is a metric space, a subset $A \subset G$ is closed if any sequence in
  $A$ that converges in $G$ has its limit in $A$. Let $(\gamma_n)$ a
  sequence in $\Gamma$ converging to an element $g \in G$. It follows
  that $\mu_n = \gamma_n \gamma_{n+1}^{-1}$ converges to  $1 \in \Gamma$.
  But $\Gamma$ is discrete, so that $\mu_n$ eventually stabilizes to $1$.
  Hence $\gamma_n = \gamma_{n+1}$ for $n$ large enough, and the limit
  certainly belongs to $\Gamma$.
\end{proof}

The group structure on $\Gamma$ is essential in this proof, as the
following counterexample shows.

\begin{example}
  The subset $\left\{ \frac{1}{n} \; |\; n \in \N_{>0} \right\}$ is
  discrete, but not closed in $\R$.
\end{example}

Since $G$ is a metric space, it is necessarily Hausdorff. The previous
lemma then implies the following. 
\begin{cor}
  \label{cor:quotient-Hausdorff}
  The quotient space $G/\Gamma$ is Hausdorff.
\end{cor}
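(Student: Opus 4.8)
The plan is to deduce the corollary from the general topological fact that a quotient $G/H$ of a topological group by a subgroup $H$ is Hausdorff if and only if $H$ is closed in $G$; since Lemma~\ref{lemma:discrete-implies-closed} tells us $\Gamma$ is closed, and $G$ being metric is in particular Hausdorff, we would be done. So the real content is to prove that $\Gamma$ closed implies $G/\Gamma$ Hausdorff. I would do this directly rather than invoke an outside reference, since the argument is short and self-contained.

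First I would recall the standard criterion: a space $Y$ is Hausdorff if and only if the diagonal $\Delta \subset Y \times Y$ is closed. Applying this to $Y = G/\Gamma$, I would instead argue more concretely with the metric. Take two distinct points $\pi(a), \pi(b) \in G/\Gamma$, so that $a\Gamma \neq b\Gamma$, equivalently $b^{-1}a \notin \Gamma$. Since $\Gamma$ is closed and $b^{-1}a \notin \Gamma$, there is an open ball around $b^{-1}a$ disjoint from $\Gamma$; using continuity of multiplication and inversion, I would translate this to find an $\varepsilon > 0$ such that the ball $B_\varepsilon(a)$ and the ball $B_\varepsilon(b)$ have the property that no element of $B_\varepsilon(a)$ lies in the same $\Gamma$-coset as any element of $B_\varepsilon(b)$ — concretely, $B_\varepsilon(b)^{-1} B_\varepsilon(a)$ stays inside the $\Gamma$-free ball around $b^{-1}a$, which can be arranged by continuity of $(x,y)\mapsto y^{-1}x$ at $(a,b)$. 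Then $\pi(B_\varepsilon(a))$ and $\pi(B_\varepsilon(b))$ are open in $G/\Gamma$ (by Lemma~\ref{lemma:proj-open}, $\pi$ is open) and disjoint, separating $\pi(a)$ from $\pi(b)$.

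The main obstacle is the bookkeeping in the middle step: making sure the saturation of the two chosen neighbourhoods, i.e.\ $B_\varepsilon(a)\Gamma$ and $B_\varepsilon(b)\Gamma$, are genuinely disjoint, not merely that the balls themselves are. This is exactly where the group structure and the closedness of $\Gamma$ are used, and it is the only place a careless argument could go wrong. Once that is set up correctly the rest is immediate from openness of $\pi$.

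Alternatively, and more economically given what has already been proved in the excerpt, I would simply cite the principle that $G/\Gamma$ is Hausdorff whenever $\Gamma$ is closed (this is a standard fact about topological groups, e.g.\ \cite[Theorems 6.1 and 6.6]{SteTall} in the abelian case, with the same proof in general) and combine it with Lemma~\ref{lemma:discrete-implies-closed}; the direct metric argument above is what lies behind such a citation. I expect the author's proof to take this short route.
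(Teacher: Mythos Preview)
Your proof is correct, and it uses the same ingredients as the paper's: closedness of $\Gamma$ (Lemma~\ref{lemma:discrete-implies-closed}), continuity of the group operations, and openness of $\pi$ (Lemma~\ref{lemma:proj-open}). The difference is only in packaging. You mention the diagonal criterion and then set it aside in favour of a direct metric construction of separating neighbourhoods; the paper does the opposite and actually runs the diagonal argument. It shows that $R = \{(g,h)\mid gh^{-1}\in\Gamma\}$ is closed in $G\times G$ (because $\Gamma$ is closed and the group operations are continuous), observes that $\pi\times\pi$ is open (because $\pi$ is), and concludes that the image of the open complement $(G\times G)\setminus R$ is open --- this image being precisely the complement of the diagonal in $G/\Gamma\times G/\Gamma$.

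Your direct approach has the advantage of being self-contained without invoking the diagonal characterisation, and it makes explicit the one delicate point (disjointness of the \emph{saturations} $B_\varepsilon(a)\Gamma$ and $B_\varepsilon(b)\Gamma$, not just of the balls). The paper's diagonal argument is slightly slicker and avoids the metric entirely, so it would go through verbatim under weaker hypotheses on $G$. Either is perfectly acceptable here.
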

\begin{proof}
  We use the following general fact: a topological space is Hausdorff 
  if and only the diagonal $\left\{ (x, x) \;|\; x \in X \right\}$ 
  is closed in $X \times X$; equivalently $\left\{ (x, y) \in X
  \times X \;|\; x \neq y \right\}$ is open.
  Consider the product map $ \phi = \pi\times \pi: G \times G \to G/\Gamma
  \times G/\Gamma$; it is continuous with respect to the product
  topologies. Moreover, since $\pi$ is open, the map $\phi$ is open as well.
  Since $\Gamma$ is closed and $G$ is a topological group, the subset
  $R = \left\{ (g, h) \;|\; g h^{-1} \in \Gamma \right\}$ is closed in
  $G \times G$.  Then $\phi( (G\times G) \setminus R)$ is open in $G/\Gamma
  \times G/\Gamma$. But this subset is exactly the set of pairs
  $(g\Gamma, h\Gamma)$ with $g \Gamma \neq h\Gamma$. Thus $G/\Gamma$ is
  Hausdorff.
\end{proof}

\begin{lemma}
  \label{lemma:discont-action}
  There exists an open neighbourhood $U \subset G$ of $1$ such that
  $U \gamma \cap U = \emptyset$ for all $\gamma \in \Gamma \setminus
  \left\{ 1 \right\}$.
\end{lemma}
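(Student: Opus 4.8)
The statement to prove is Lemma~\ref{lemma:discont-action}: for a discrete subgroup $\Gamma \subset G$, there is an open neighbourhood $U$ of $1$ with $U\gamma \cap U = \emptyset$ for every $\gamma \in \Gamma \setminus \{1\}$. The plan is to exploit discreteness of $\Gamma$ to isolate $1$, then use continuity of the group operation to shrink a neighbourhood of $1$ enough that translating it by any nontrivial $\gamma$ moves it off itself.

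First I would use that $\Gamma$ is discrete to pick an open set $W \subset G$ with $W \cap \Gamma = \{1\}$; equivalently, since $G$ is metric, a ball $B_r(1)$ of some radius $r > 0$ containing no element of $\Gamma$ other than $1$. Next I would invoke continuity of the map $f(x,y) = xy^{-1}$ at $(1,1)$ (guaranteed by Definition~\ref{def:top-group}): since $f(1,1) = 1 \in W$, there is an open neighbourhood $U$ of $1$ with $U U^{-1} \subset W$; concretely one may take $U = B_{r/2}(1)$, and then for $a, b \in U$ one has $d(ab^{-1}, 1) \le d(a,1) + d(1, b^{-1})$ — but $d(1,b^{-1}) = d(b,1)$ by left-invariance is not automatic in a general metric group, so I would instead argue purely topologically: by continuity of $(x,y) \mapsto xy^{-1}$ choose $U$ open, $1 \in U$, with $\{xy^{-1} : x,y \in U\} \subset W$.

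Then I would verify the conclusion directly. Suppose for contradiction that $U\gamma \cap U \ne \emptyset$ for some $\gamma \in \Gamma$. Pick $u_1\gamma = u_2$ with $u_1, u_2 \in U$. Then $\gamma = u_1^{-1} u_2 \in U^{-1}U \subset W$ (after adjusting the previous step to control $U^{-1}U$ rather than $UU^{-1}$ — symmetrically one simply shrinks $U$ so that both $U^{-1}U \subset W$ and $UU^{-1}\subset W$, or replaces $U$ by $U \cap U^{-1}$). Hence $\gamma \in W \cap \Gamma = \{1\}$, so $\gamma = 1$. This is the desired contradiction, completing the proof.

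The only mild subtlety — and the place worth being careful — is the passage from ``$f$ continuous at $(1,1)$'' to the existence of a single symmetric-enough $U$: one must make sure the neighbourhood controls products of the form $u_1^{-1}u_2$ with both factors in $U$, which is why I would choose $U$ via continuity and, if needed, replace it by $U \cap U^{-1}$ (still open, still containing $1$, since inversion is a homeomorphism). Everything else is a routine contradiction argument, and no genuine obstacle arises.
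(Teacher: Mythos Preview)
Your argument is correct. You construct $U$ directly: pick an open $W$ with $W \cap \Gamma = \{1\}$ by discreteness, then use continuity of the map $(x,y) \mapsto x^{-1}y$ at $(1,1)$ (or of $(x,y)\mapsto xy^{-1}$, after symmetrizing $U$) to obtain $U$ with $U^{-1}U \subset W$, and conclude that any $\gamma$ with $U\gamma \cap U \neq \emptyset$ must lie in $W \cap \Gamma = \{1\}$. The only delicate point is the one you already flagged---making sure the neighbourhood controls $u_1^{-1}u_2$ rather than $u_1 u_2^{-1}$---and your fix (replace $U$ by $U \cap U^{-1}$, or appeal to continuity of $(x,y)\mapsto x^{-1}y$ directly) handles it.

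The paper takes a different route, arguing by contradiction with sequences and leaning on the standing metric hypothesis. If no such $U$ exists, then for every $n$ the ball $B_{1/n}(1)$ meets $B_{1/n}(1)\gamma_n$ for some $\gamma_n \neq 1$; choosing $g_n$ in the intersection gives $g_n \to 1$ and $g_n\gamma_n^{-1} \to 1$, hence $\gamma_n \to 1$, contradicting discreteness. Your direct construction is both cleaner and more general: it works in any topological group, whereas the paper's argument is tailored to the metric setting announced at the start of the section. The sequential proof, on the other hand, has the pedagogical advantage of matching the style of the neighbouring lemmas (e.g.\ Lemma~\ref{lemma:discrete-implies-closed}), which also exploit sequences and the metric.
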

\begin{proof}
  Suppose that no such neighbourhood $U$ exists. For $n \in \N_{>0}$,
  let $U_n = B_{1/n}(1)$ be the open ball of radius $1/n$ around the
  identity $1 \in G$. Then for each $n$ there exist $\gamma_n \in \Gamma
  \setminus \left\{ 1 \right\}$ such that the intersection
  $U_n \gamma_n \cap U_n$ is nonempty. Let $g_n$ be an element in this
  intersection.
  Then $g_n$ converges to $1$ when $n$ goes to infinity, and
  $g_n \gamma_n^{-1} \to 1$ as well. 
  This is only possible if $\gamma_n \to 1$, which contradicts the
  fact that $\Gamma$ is discrete.
\end{proof}

\begin{cor}
  \label{cor:proj-local-homeo}
  The projection map $\pi: G \to G/\Gamma$ is a local homeomorphism,
  that is, each $g \in G$ has an open neighbourhood $U$ such that
  $\pi|_U: U \to \pi(U)$ is a homeomorphism. 
\end{cor}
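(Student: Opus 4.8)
The plan is to exploit the previous two results: Corollary~\ref{cor:quotient-Hausdorff} guarantees $G/\Gamma$ is Hausdorff, and Lemma~\ref{lemma:discont-action} provides an open neighbourhood $U$ of $1$ with $U\gamma \cap U = \emptyset$ for every $\gamma \in \Gamma \setminus \{1\}$. First I would fix an arbitrary $g \in G$ and use left-translation by $g$ (which is a homeomorphism of $G$) to transport the neighbourhood $U$ of $1$ to the neighbourhood $V = gU$ of $g$; the translation-invariance of the obstruction means $V\gamma \cap V = \emptyset$ for all $\gamma \neq 1$ as well. Since $\pi$ is continuous and open (Lemma~\ref{lemma:proj-open}), the restriction $\pi|_V : V \to \pi(V)$ is automatically a continuous open surjection onto the open set $\pi(V)$, so the only thing left to check is injectivity.

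For injectivity: if $v_1, v_2 \in V$ satisfy $\pi(v_1) = \pi(v_2)$, then $v_1 = v_2\gamma$ for some $\gamma \in \Gamma$; but $v_1 \in V$ and $v_2\gamma \in V\gamma$, so $V\gamma \cap V \neq \emptyset$, forcing $\gamma = 1$ and hence $v_1 = v_2$. Thus $\pi|_V$ is a continuous open bijection onto $\pi(V)$, i.e. a homeomorphism, and since $g$ was arbitrary this establishes that $\pi$ is a local homeomorphism.

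I do not anticipate a serious obstacle here: the content is entirely packaged in Lemma~\ref{lemma:discont-action}, and the argument is the standard ``discontinuous action $\Rightarrow$ quotient map is a covering (in particular local homeomorphism)'' reasoning. The one point requiring a moment's care is the observation that the key property of $U$ survives left-translation to any point $g$, which follows because group multiplication is a homeomorphism and $(gU)\gamma \cap gU \neq \emptyset$ is equivalent to $g(U\gamma \cap U) \neq \emptyset$. (The Hausdorffness from Corollary~\ref{cor:quotient-Hausdorff} is not strictly needed for the bare statement but makes the local homeomorphism picture well-behaved.)
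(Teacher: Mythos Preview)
Your proof is correct and follows essentially the same approach as the paper: take $U$ from Lemma~\ref{lemma:discont-action}, translate to $V=gU$, and use openness of $\pi$ together with the injectivity forced by $V\gamma\cap V=\emptyset$. You spell out the translation invariance and the injectivity step in slightly more detail than the paper does, but the argument is the same.
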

\begin{proof}
  Let $g \in G$ and let $U$ be an open neighbourhood of $1$ as in
  Lemma~\ref{lemma:discont-action}. Then $\pi: gU \to \pi(gU)$ is   
  bijective: the surjectivity is obvious and the injectivity follows
  from the particular choice of $U$. But since $\pi$ is open, this means
  that the restriction $\pi|_{gU}$ is a homeomorphism.
\end{proof}

\section{Locally compact groups and measure}


To define the notion of lattice in complete generality (in the next
section), we need a measure on the topological group $G$ that is invariant. 
We study the existence of such a measure in this section. 

\begin{definition}[Locally compact group]
  A \emph{locally compact group} $G$ is a topological group that is
  Hausdorff and locally compact: every $g \in G$ has a compact
  neighbourhood. 
\end{definition}

\begin{example}
  $\R^n$ and $\GL_n(\R)$ are locally compact. Any discrete group is
  locally compact. 
\end{example}

Let us consider  the $\sigma$-algebra $\mathcal{B}$ of \emph{Borel
sets}, that is, $\mathcal{B}$ is the $\sigma$-algebra generated by the
open subsets of $G$. A measure $\mu$ on the measure space $(G,
\mathcal{B})$ is called \emph{(left) invariant} if for any $g \in G$ and $A \in
\mathcal{B}$, one has:
\begin{align}
  \mu(g A) = \mu(A). 
  \label{eq:left-invariant}
\end{align}
A measure $\mu$ on $G$ will be called \emph{regular} if for any $A \in
\mathcal{B}$ and any $V \subset G$ open, one has:
\begin{align}
  \mu(A) &= \inf\left\{ \mu(U) \,|\, A \subset U, U \mathrm{ open } 
\right\} ; \\
\mu(V) &= \sup \left\{ \mu(K) \,|\, K \subset V, K \mathrm{ compact }
\right\}.
\end{align}

\begin{theorem}[Haar's theorem]
  \label{thm:Haar}
  Let $G$ be a locally compact group. There exists on $(G, \mathcal{B})$ 
  a nonzero regular left invariant measure $\mu$ that is finite on compact
  subset: $\mu(K) < \infty$ for any compact $K \in \mathcal{B}$.
  Moreover, up to a scalar this measure is unique: if $\mu'$ is another
  such measure, then $\mu' = c \mu$ for some $c \in \R_{>0}$.
\end{theorem}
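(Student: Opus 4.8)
The plan is to construct the Haar measure via the classical Haar covering/content argument, then promote the content to a Borel measure and verify invariance, regularity, finiteness on compacts, and uniqueness. First I would fix a nonempty open set $U_0$ with compact closure (which exists by local compactness) to serve as a normalizing ``unit''. For any nonempty open set $V$ and any compact set $K$, one can cover $K$ by finitely many left translates $g_iV$ (by compactness), and I write $(K:V)$ for the minimal number of such translates needed. The key observation is that $(K:V)/(U_0:V)$ is bounded above and below independently of $V$ (submultiplicativity and the bounds $(K:V)\le (K:U_0)(U_0:V)$), so one defines, for each nonempty open $V$, a function $\lambda_V$ on compact sets by $\lambda_V(K)=(K:V)/(U_0:V)$. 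These functions are left-invariant, monotone, subadditive, and superadditive on sets that are ``separated'' at scale $V$; letting $V$ shrink to $\{1\}$ one extracts (via a Tychonoff/net-compactness argument on the product of the compact intervals in which each $\lambda_V(K)$ lives) a limit functional $\lambda$ on compact sets that is finitely additive on disjoint compacts, monotone, left-invariant, and normalized by $\lambda(\overline{U_0})>0$, hence nonzero.

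Next I would extend $\lambda$ from compact sets to a content on all subsets by the usual inner/outer procedure: set $\mu^*(V)=\sup\{\lambda(K):K\subset V,\ K\text{ compact}\}$ for $V$ open, then $\mu^*(A)=\inf\{\mu^*(V):A\subset V,\ V\text{ open}\}$ for arbitrary $A$. A standard (but somewhat lengthy) verification shows $\mu^*$ is an outer measure, that all Borel sets are $\mu^*$-measurable, and that the restriction $\mu=\mu^*|_{\mathcal B}$ is a left-invariant regular Borel measure finite on compact sets; left-invariance and regularity are inherited directly from the corresponding properties of $\lambda$ and the definition of $\mu^*$, and $\mu(\overline{U_0})\ge\lambda(\overline{U_0})>0$ gives nontriviality.

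For uniqueness, suppose $\mu'$ is another measure with the stated properties. The standard approach is to use the averaging/convolution trick: for suitable nonzero continuous compactly supported nonnegative functions $f,g$ one forms the double integral $\int\!\int f(x)g(xy)\,d\mu(y)\,d\mu'(x)$ and evaluates it in two orders using left-invariance and Fubini, obtaining $\frac{\int f\,d\mu}{\int g\,d\mu}$ times a quantity symmetric in the roles of $\mu,\mu'$; comparing shows $\int f\,d\mu/\int f\,d\mu'$ is independent of $f$, hence $\mu'=c\mu$ for the constant $c$ equal to this common ratio. Regularity is what lets us pass between the measure-theoretic and the $C_c(G)$ pictures. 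I expect the main obstacle to be the \emph{existence} half — specifically, showing the net $(\lambda_V)$ has a limit with enough additivity to extend to a genuine Borel measure (the passage from finitely additive content on compacts to countably additive Borel measure, including proving Borel sets are Carathéodory-measurable). The uniqueness argument, by contrast, is short once one has regularity in hand; since the full construction is long and standard, in the write-up I would sketch the content argument and refer to a standard source (e.g.\ the treatment in Bourbaki or in Folland's \emph{Real Analysis}) for the routine verifications.
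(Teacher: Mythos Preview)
The paper does not prove Haar's theorem: it is stated without proof and immediately followed by the definition of Haar measure and a few examples, which is standard practice in lecture notes that simply need the result as background. So there is no ``paper's own proof'' to compare against.

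Your sketch is the classical Haar--Weil construction and is essentially correct as an outline. A few small remarks. In the existence half, the compactness argument extracting a limit of the $\lambda_V$ is usually phrased not as ``letting $V$ shrink to $\{1\}$'' (which presupposes a countable base at the identity) but as taking a cluster point in the compact product $\prod_K [0,(K:U_0)]$ over all compact $K$, indexed by the net of neighbourhoods of $1$; this works without any countability hypothesis on $G$. In the uniqueness half, your double-integral idea is right in spirit, but the substitution you need is not quite ``$g(xy)$'': the clean version takes $f,g\in C_c(G)$ and computes $\iint f(x)g(yx)\,d\mu(x)\,d\mu'(y)$, uses left-invariance of $\mu$ to replace $x$ by $y^{-1}x$, and then Fubini to separate the integrals; one also needs a word on why Fubini applies (both measures are $\sigma$-finite on the support, by local compactness and finiteness on compacts). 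Your plan to sketch and then cite Folland or Bourbaki for the Carath\'eodory verifications is exactly what is appropriate here.
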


\begin{definition}[Haar measure]
  The measure $\mu$ given by the preceding theorem is called a
  \emph{Haar measure} on $G$.
\end{definition}

\begin{example}
  The Lebesgue measure $\lambda$ is a Haar measure on $\R^n$. It is
  normalized by $\lambda([0,1]^n) = 1$.
\end{example}

\begin{example}
  The group $\R^\times = \GL_1(\R)$ is locally compact. A Haar measure
  $\mu$ on $\R^\times$ is given by:
  \begin{align*}
    \mu(E) &= \int_E \frac{1}{|x|} d\lambda(x) \,. 
  \end{align*}
\end{example}

This last example generalizes to the following.

\begin{example}
  Let $G = \GL_n(\R)$, seen as a subset of $\R^{n\times n}$ and let
  $\lambda$ be the Lebesgue measure on the latter. Then a Haar measure
  $\mu$ on $G$ is given by: 
  \begin{align*}
    \mu(E) &= \int_E \frac{1}{|\det(g)|^n} d\lambda(g) \,. 
  \end{align*}
\end{example}

\section{Lattices in locally compact groups}

Proposition \ref{prop:eucl-lattice-topol}, which gives a topological
characterization of Euclidean lattices, suggests to define a lattice in
a topological group $G$ as a discrete subgroup $\Gamma \subset G$ with
$G/\Gamma$ compact. But another important feature of any Euclidean 
lattice $L \subset \R^n$ is the existence of a fundamental domain $\F
\subset \R^n$ of finite volume and such that $L + \F  = \R^n$
(equivalently: the projection $\pi: \F \to \R^n/L$ is surjective).
It turns out that this viewpoint is more convenient to extend
to notion of lattice to more general groups.

\begin{definition}[Lattice]
  \label{def:lattice}
  Let $G$ be a locally compact group, with Haar measure $\mu$.
  A discrete subgroup $\Gamma \subset G$ is called a \emph{lattice} 
  if there exists a measurable set $\F \subset G$ such that 
  $\F \Gamma  = G$ and $\mu(\F) < \infty$. In this case we say that 
  $\Gamma$ has \emph{finite covolume}.
\end{definition}

\begin{rmk}
  \label{lattice-indep-Haar}
  By uniqueness of the Haar measure (cf. Theorem~\ref{thm:Haar}), this
  definition does not depend on the choice of $\mu$. 
\end{rmk}

\begin{rmk}
  Let $\pi: G \to G/\Gamma$ be the projection map. The condition $\F \,
  \Gamma = G$ is equivalent to say that $\pi(\F) = G/\Gamma$. 
\end{rmk}

\begin{example}
  \label{ex:Euclidean-lattice}
  A subgroup $\Gamma \subset \R^n$ is a lattice if and only if it is a
  Euclidean lattice in the sense of Defintion~\ref{def:eucl-lattice}. 
  The measurable set $\F$ can be chosen to be a fundamental domain
  determined by the choice of a basis of $\Gamma$.
\end{example}

\begin{rmk}
  We will not use this notion in this course, but a \emph{fundamental
  domain} for $\Gamma$ in $G$ could be defined as a measurable set
  $\Gamma \subset G$ such that $\F \Gamma= G$ and which is minimal for this
  property. More precisely, the covering $G = \bigcup_{\gamma \in
  \Gamma} \F \gamma$ should be a disjoint union.
\end{rmk}

The compactness of the quotient $G/\Gamma$ implies a finite covolume for
$\Gamma \subset G$, as the following proposition shows.

\begin{prop}
  \label{prop:cocompact-implies-lattice}
  If $\Gamma \subset G$ is a discrete subgroup such that $G/\Gamma$ is
  compact, then $\Gamma$ is a lattice.
\end{prop}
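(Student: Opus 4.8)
The plan is to construct a finite-measure set $\F \subset G$ with $\F\Gamma = G$ by exploiting the compactness of $G/\Gamma$ together with the local homeomorphism property of the projection $\pi: G \to G/\Gamma$ (Corollary~\ref{cor:proj-local-homeo}). First I would invoke Corollary~\ref{cor:proj-local-homeo}: every point $g \in G$ has an open neighbourhood $U_g$ such that $\pi|_{U_g}$ is a homeomorphism onto its image. Shrinking if necessary, I may assume each $U_g$ has compact closure, using local compactness of $G$. Then $\{\pi(U_g)\}_{g \in G}$ is an open cover of $G/\Gamma$, so by compactness finitely many of them, say $\pi(U_{g_1}), \dots, \pi(U_{g_k})$, already cover $G/\Gamma$.

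Next I would set $\F = \bigcup_{i=1}^k U_{g_i}$. Since $\pi(\F) \supseteq \bigcup_i \pi(U_{g_i}) = G/\Gamma$, we get $\F\Gamma = G$, which is the first required property. For the second property, $\F$ is a finite union of sets $U_{g_i}$, each contained in the compact set $\overline{U_{g_i}}$; since the Haar measure $\mu$ is finite on compact sets (Theorem~\ref{thm:Haar}), each $\mu(U_{g_i}) \le \mu(\overline{U_{g_i}}) < \infty$, and hence $\mu(\F) \le \sum_{i=1}^k \mu(U_{g_i}) < \infty$. One should also note that $\F$, being open, is Borel measurable, so it is a legitimate choice in Definition~\ref{def:lattice}. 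This shows $\Gamma$ is a lattice.

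The main obstacle — really the only subtlety — is making sure the neighbourhoods can be taken with compact closure so that the Haar-finiteness applies; this follows from local compactness, but in a general topological group one should check that an open neighbourhood with compact closure exists at each point, which is exactly the content of local compactness once one knows (from the Hausdorff assumption) that compact sets are closed, so one can intersect a compact neighbourhood with the interior to get a relatively compact open neighbourhood, and then intersect further with the $U_g$ coming from Corollary~\ref{cor:proj-local-homeo}. Everything else is a routine finite-cover argument, and no delicate estimate is needed.
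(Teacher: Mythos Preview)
Your proof is correct and follows essentially the same route as the paper's: choose relatively compact open neighbourhoods in $G$, push them down via the open map $\pi$ to get an open cover of $G/\Gamma$, extract a finite subcover by compactness, and take $\F$ to be the union of the corresponding neighbourhoods upstairs. The only difference is that you invoke Corollary~\ref{cor:proj-local-homeo} to make $\pi|_{U_g}$ a homeomorphism, whereas the paper uses only that $\pi$ is open (Lemma~\ref{lemma:proj-open}); the local-homeomorphism property is not actually needed here, since all you use about the $\pi(U_g)$ is that they are open and cover $G/\Gamma$.
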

\begin{proof}
  For each $y \in G/\Gamma$ let us choose $x \in G$ with $\pi(x) = y$.
  We write $U_y \subset G$ for an open neighbourhood of $x$, and (since
  $G$ is locally compact) we can choose $U_y$ to be contained in a compact subset 
  and thus to have finite measure. Since $\pi$ is open, the collection of
  subsets $\pi(U_y)$ (when $y$ runs over all elements in $G/\Gamma$) in an open 
  cover of $G$. 
  By compactness of $G/\Gamma$ there exists a finite subcover
  $\pi(U_{y_1}), \dots, \pi(U_{y_n})$ that suffices to cover all $G$. 
  Then $\F = U_{y_1} \cup \dots \cup U_{y_n}$ is a measurable set with
  $\mu(\F) < \infty$, and $\pi(\F) = G/\Gamma$. 
\end{proof}

\begin{definition}[Uniform lattice]
  A lattice $\Gamma \subset G$ is called \emph{uniform} (or
  \emph{cocompact}) if the quotient $G/\Gamma$ is compact. 
\end{definition}

\begin{example}
  For $n>1$ the discrete group $\SL_n(\Z)$ is a lattice in $\SL_n(\R)$, and
  we will see latter in this course that it is {\bf not} uniform. 
\end{example}

\chapter{Algebraic and arithmetic groups}

The examples $\Z^n \subset \R^n$ or $\GL_n(\Z) \subset \GL_n(\R)$ motivates the
idea of ``taking integral points'' in a group in order to produce discrete
subgroups. However, for a general topological group $G$ there is no such notion
of ``integral points''. We need to consider more structure on $G$ if we want to
make sense of this construction. To obtain the needed structure we 
wil have to study a bit of algebraic geometry.

\section{Algebraic sets}

In this chapter $k$ denotes a field of characteristic zero, and $K =
\overline{k}$  an algebraic closure of $k$, with inclusion 
$k \subset K$. For any $k$-vector space $\V_k$ and any field extension
$\ell/k$, we  write  $\V_\ell = \ell \otimes_k \V_k$. Then the map $v
\mapsto 1 \otimes v$ defines a canonical inclusion $\V_k \subset \V_\ell$.
We will also denote $\V = \V_K$ (if the field $k$ is clear from the
context).

\subsubsection{Polynomial functions}
Let $\V_k$ be a $k$-vector space of finite dimension $n$. A map $f : \V_k \to k$ is
called \emph{polynomial} if it can be written as a polynomial expression
(with coefficients in $k$) in terms of elements of the dual $\V_k^*$. 
We denote by $k[\V]$ the set of polynomial maps on $\V_k$. This notation
indicates that we may see these maps as functions on $\V = \V_K$. More
generally, for a field extension $\ell / k$, we write $\ell[\V] = \ell \otimes_k
k[\V]$ and see it as the ring of polynomial functions on $\V_L$, where $L =
\cell$ is the algebraic closure.

\begin{rmk}
        \label{rmk:ring-polyn-concrete}
        This can be made more concrete by
        choosing a basis $x_1,\dots, x_n \in \V_k$. If $T_1,\dots, T_n
        \in \V_k^*$
        is its dual basis, then $k[\V]$ can be identified with the ring of
        polynomials $k[T_1, \dots, T_n]$, and then $\ell[\V] =
        \ell[T_1,\dots,T_n]$.
\end{rmk}

\subsubsection{Algebraic sets}
Any subset  $S \subset K[\V]$ defines an \emph{algebraic set} $\V(S) \subset \V$,
given by
  \begin{align}
    \V(S) = \left\{ v \in \V \, | \, f(v) = 0 \quad \forall\, f \in S  \right\}.
  \end{align}
In particular, $\V = \V(\{0\})$ is an algebraic set.

\begin{rmk}
        Note that taking zeros reverses the inclusion: if $S' \subset S$, then
        $\V(S) \subset \V(S')$.
\end{rmk}

It is not difficult to show that the algebraic sets of $\V$ form the
collection of closed subsets of a topology on $\V$, called the
\emph{Zariski topology}. This justifies the following definition.

\begin{definition}
  A subset $X \subset \V$ is called \emph{$k$-closed} if $X = \V(S)$
  for some $S \subset k[\V]$. 
\end{definition}

\begin{rmk}
       The $k$-closed subsets also define a topology on $\V$, called the
       \emph{$k$-Zariski topology}. 
\end{rmk}

\begin{rmk}
        The Zariski topology (and the $k$-Zariski topology) is {\bf not} Hausdorff.
\end{rmk}

\begin{definition}
        On any algebraic set $X \subset \V$, we call the \emph{Zariski topology} the
        topology induced by the Zariski topology on $\V$.
\end{definition}

\subsubsection{Rational points}


Let $\ell/k$ be a field extension such that $\ell \subset K$.
If $X = \V(S)$ is a $k$-closed set, 
we denote by $X(\ell)$ the set $\V_\ell \cap X$, and call it the set of
{\em $\ell$-rational points} of $X$. In particular, $X = X(K)$. And if
$X = \V$, then $X(\ell) = \V_\ell$.


Suppose that $k = \R$ (so that $K = \C$). Then $\V_\R$ (and $\V = \V_\C$), as a
real (resp.\ complex) vector space, has a natural metric topology. We usually refer to
it as the \emph{Hausdorff topology}, in order to distinguish it from the Zariski
topology on $\V$.

\begin{prop}
        \label{prop:R-points-closed}
        If $X = \V(S) $ is $\R$-closed, then $X(\R)$ is closed
        in $\V_\R$ with respect to the Hausdorff topology. Similarly, for $X$ as
        a subset of the $\C$-vector space $\V_\C$. 
\end{prop}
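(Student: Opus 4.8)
The plan is to observe that $X(\R)$ is simply the common zero locus, inside $\V_\R$, of a family of polynomial functions $f \in S$, and that each such $f$, when restricted to $\V_\R$, is a continuous map $\V_\R \to \R$ with respect to the Hausdorff topology. Granting this, $X(\R) = \bigcap_{f \in S} f^{-1}(\{0\}) \cap \V_\R$ is an intersection of preimages of the closed set $\{0\}$ under continuous maps, hence closed.

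First I would fix a basis $x_1, \dots, x_n$ of $\V_k$ (here $k = \R$), so that by Remark~\ref{rmk:ring-polyn-concrete} the ring $\R[\V]$ is identified with $\R[T_1, \dots, T_n]$ and $\C[\V]$ with $\C[T_1, \dots, T_n]$; under this identification $\V_\R \cong \R^n$ and $\V_\C \cong \C^n$ carry their usual metric (Hausdorff) topologies. Since $X$ is $\R$-closed we may write $X = \V(S)$ with $S \subset \R[\V] = \R[T_1, \dots, T_n]$. For each $f \in S$, the evaluation map $v \mapsto f(v)$ on $\V_\R \cong \R^n$ is a genuine polynomial function in the real coordinates $T_1, \dots, T_n$, hence continuous for the Hausdorff topology; likewise $v \mapsto f(v)$ on $\V_\C \cong \C^n$ is continuous, since a polynomial with complex coefficients is continuous in the complex (equivalently, the underlying real) topology. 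Because $\{0\}$ is closed in $\R$ (resp.\ in $\C$), each set $\{v : f(v) = 0\}$ is closed in $\V_\R$ (resp.\ in $\V_\C$), and therefore
\begin{align*}
  X(\R) &= \V_\R \cap \V(S) = \bigcap_{f \in S} \left\{ v \in \V_\R : f(v) = 0 \right\}
\end{align*}
is closed in $\V_\R$ as an intersection of closed sets. The same computation with $\C$ in place of $\R$ gives that $X$, viewed inside $\V_\C$, is closed in the Hausdorff topology.

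There is essentially no obstacle here: the only point requiring a word of care is the logical structure, namely that $X(\R) = \V_\R \cap X$ is cut out in $\V_\R$ by the \emph{same} equations $f \in S$ that cut out $X$ in $\V$, which is immediate from the definitions of $\V(S)$ and of the $\R$-rational points. The continuity of polynomial functions for the metric topology is standard and I would simply invoke it. If one wishes to avoid choosing a basis, one can instead note that every $f \in k[\V]$ is, by definition, a polynomial expression in elements of $\V_k^*$, each of which is $\R$-linear hence continuous on $\V_\R$; sums and products of continuous maps are continuous, so $f$ is continuous on $\V_\R$, and the argument proceeds as above.
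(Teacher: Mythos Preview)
Your argument is correct and is exactly the paper's approach: the proof there is the one-line observation that each $f \in S$ is polynomial, hence continuous on $\V_\R$ (resp.\ $\V_\C$), so $X(\R)$ (resp.\ $X$) is an intersection of closed preimages. You have simply spelled out the details more carefully.
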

\begin{proof}
        This follows directly from the fact that any $f \in S$ is polynomial and
        thus continuous as a real (or complex) function.
\end{proof}

\begin{example}
  \label{eg:circle-hyperb}
        For $\V = \C^2$ one has $\C[\V] \cong \C[x,y]$. 
        Then for $f = x^2 + y^2 -1$ and $X = \V(\left\{ f \right\})$, we have
        that $X(\R)$ corresponds to  the unit circle in $\R^2$. 
        A small modification also provides a noncompact example: for $Y = \V(x^2 -
        y^2 - 1)$ we have that $Y(\R)$ is noncompact. 
\end{example}

\subsubsection{Regular functions and ideals}
For a $k$-closed subset $X \subset \V$, we denote by $I(X) \subset K[\V]$ the 
ideal defined by
\begin{align}
        I(X) = \left\{ f \in K[\V] \;|\; f(v) = 0 \quad \forall \, v \in X  \right\}. 
        \label{eq:ideal-IX}
\end{align}
Using this ideal, we define the \emph{ring of regular functions} on $X$ to be
the $K$-algebra $K[X] = K[\V]/I(X)$. The elements in $K[X]$ can indeed be
identified as (polynomials) functions on $X$. We also define:
\begin{align}
        I_k(X) &= k[\V] \cap I(X) \mbox{ and }\\ k[X] &= k[\V]/I_k(X). 
        \label{eq:kX}
\end{align}

\begin{rmk}
        \label{rmk:defined-over-k$}
Under our assumption that $\mathrm{char}(k) = 0$ (or more generally if $k$ is
\emph{perfect}), for any $k$-closed subset $X \subset \V$ the ideal $I_k(X)$ generates
$I(X)$ over $K$ (one says that $X$ is  \emph{defined over $k$}; for $k$ non-perfect
this is a stronger condition than being $k$-closed). Then one has $K \otimes_k
k[X] \cong K[X]$. 
\end{rmk}

\begin{rmk}
        \label{rmk:f-on-k-points}
        If $f \in I_k(X)$ then it certainly vanishes on $X(k)$. But the converse
        is not true: in general $I_k(X)$ is smaller than the set of functions $f
        \in k[\V]$ vanishing on $X(k)$. 
\end{rmk}

Let $X = \V(S)$ be an algebraic set. It is easy to check that $\V(S) =
\V(\sqrt{(S)})$, where $\sqrt{(S)}$ is the {\em radical} of the ideal $(S)$ generated
by $S$, which is  defined by 
\begin{align}
        \sqrt{(S)} = \left\{ f \in K[\V] \;|\; f^j \in (S) \mbox{ for some
        } j  \in \N_{>0} \right\}.
\end{align}
In other words, $\sqrt{(S)} \subset I(X)$. The opposite inclusion holds. For a
proof, see for instance \cite[\S 1.1]{Humph75}.

\begin{theorem}[Hilbert's Nullstellensatz]
        For $X = \V(S)$, one has $I(X) = \sqrt{(S)}$.
        \label{thm:nullstellen}
\end{theorem}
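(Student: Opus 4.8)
The inclusion $\sqrt{(S)} \subset I(X)$ having already been noted, the plan is to prove the reverse inclusion $I(X) \subset \sqrt{(S)}$, and the standard route passes through the \emph{weak} form of the statement: every proper ideal $\mathfrak{a} \subsetneq K[\V]$ has nonempty zero set $\V(\mathfrak{a})$, equivalently every maximal ideal of $K[T_1,\dots,T_n] \cong K[\V]$ (Remark~\ref{rmk:ring-polyn-concrete}) has the form $(T_1 - a_1, \dots, T_n - a_n)$ with the $a_i \in K$. Granting the weak form, I would conclude by the trick of Rabinowitsch. Let $f \in I(X)$. Since $K[\V]$ is Noetherian I may assume $S = \{f_1,\dots,f_m\}$ is finite; I introduce an extra coordinate $T_0$ and consider the ideal $\mathfrak{b} \subset K[T_0,T_1,\dots,T_n]$ generated by $f_1,\dots,f_m$ and $1 - T_0 f$. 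It has empty zero set: at a common zero the vanishing of $f_1,\dots,f_m$ forces the last $n$ coordinates to give a point of $X$, where $f$ vanishes, so $1 - T_0 f = 1 \ne 0$ there, a contradiction. By the weak form $\mathfrak{b}$ is the unit ideal, so $1 = \sum_i g_i f_i + h\,(1 - T_0 f)$ for suitable polynomials $g_i, h$; substituting $T_0 = 1/f$ in the fraction field and clearing denominators by a large power $f^N$ yields $f^N \in (S)$, i.e.\ $f \in \sqrt{(S)}$.

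It remains to prove the weak form, which is where the actual content sits. I would deduce it from \emph{Zariski's lemma}: a field $L$ that is finitely generated as a $K$-algebra is a finite (hence algebraic) extension of $K$. Applying this to $L = K[T_1,\dots,T_n]/\mathfrak{m}$ for a maximal ideal $\mathfrak{m}$, and using that $K = \overline{k}$ is algebraically closed, gives $L = K$, so the image $a_i$ of each $T_i$ lies in $K$; then $\mathfrak{m}$ contains, hence equals, the maximal ideal $(T_1 - a_1, \dots, T_n - a_n)$. Zariski's lemma itself I would obtain from \emph{Noether normalization}: the finitely generated $K$-algebra $L$ contains a polynomial subring $K[z_1,\dots,z_d]$ over which it is a finitely generated module, in particular integral. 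Since $L$ is a field integral over $K[z_1,\dots,z_d]$, the latter is itself a field (an elementary integrality argument), and a polynomial ring is a field only when $d = 0$; hence $L$ is a finite $K$-module, i.e.\ a finite extension of $K$.

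The main obstacle is therefore concentrated entirely in this commutative-algebra input: Noether normalization (or an equivalent, such as the Artin--Tate lemma), together with the elementary integrality fact it relies on. One cannot shortcut this via the usual uncountability argument, since here $K = \overline{k}$ may well be countable (e.g.\ $\overline{\Q}$). Everything else --- the Hilbert basis theorem, the Rabinowitsch substitution, and the identification of maximal ideals with points --- is formal, so in the notes it would be reasonable to cite the commutative algebra (as in \cite[\S 1.1]{Humph75}) and spell out only the Rabinowitsch reduction in detail.
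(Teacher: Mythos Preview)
Your outline is correct and follows the standard route: weak Nullstellensatz via Zariski's lemma (obtained from Noether normalization), then the Rabinowitsch trick to upgrade to the strong form. The paper, however, does not prove the theorem at all: it notes the easy inclusion $\sqrt{(S)} \subset I(X)$ and then simply cites \cite[\S 1.1]{Humph75} for the reverse inclusion, with no argument given. So rather than taking a different approach, you have supplied a full proof where the paper offers none; indeed, your final paragraph already anticipates exactly the choice the paper makes, namely to outsource the commutative-algebra core to Humphreys.
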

\begin{rmk}
        The fact that $K$ is algebraically closed is important here.
\end{rmk}

\begin{cor}
        \label{cor:closed-from-ideals}
        For $X \subset \V$ closed, one has $X = \V(I(X))$.
\end{cor}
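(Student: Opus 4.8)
The plan is to prove the two inclusions $X \subseteq \V(I(X))$ and $\V(I(X)) \subseteq X$ separately; neither direction requires the full strength of the Nullstellensatz, but the theorem gives a clean alternative packaging that I will also indicate. First I would dispose of the inclusion $X \subseteq \V(I(X))$, which is tautological: by the very definition \eqref{eq:ideal-IX} of $I(X)$, every $f \in I(X)$ vanishes at every point of $X$, so each $v \in X$ satisfies $f(v) = 0$ for all $f \in I(X)$, i.e.\ $v \in \V(I(X))$.

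For the reverse inclusion I would use the hypothesis that $X$ is closed, which means $X = \V(S)$ for some subset $S \subseteq K[\V]$. Since every element of $S$ vanishes on $\V(S) = X$, we have $S \subseteq I(X)$. Now apply the inclusion-reversing property of the operator $\V(-)$ (the remark following the definition of algebraic sets): from $S \subseteq I(X)$ we get $\V(I(X)) \subseteq \V(S) = X$. Together with the first paragraph this yields $X = \V(I(X))$.

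There is essentially no obstacle here; the only points requiring a little care are that ``closed'' must be read in the Zariski sense, so that the presentation $X = \V(S)$ is actually available, and that the inclusion-reversing property is invoked in the correct direction. If one prefers to exhibit where Theorem~\ref{thm:nullstellen} is used, the argument can be repackaged as a chain of equalities: writing $X = \V(S)$ and letting $(S)$ denote the ideal generated by $S$, one has $\V(S) = \V\big((S)\big) = \V\big(\sqrt{(S)}\big)$ because a polynomial and its positive powers share the same zero locus, and then the Nullstellensatz identifies $\sqrt{(S)}$ with $I(X)$, giving $X = \V\big(\sqrt{(S)}\big) = \V(I(X))$.
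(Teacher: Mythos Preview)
Your proof is correct. In fact your main argument is slightly more elementary than the paper's: the paper establishes the corollary as a consequence of the Nullstellensatz (Theorem~\ref{thm:nullstellen}), via the chain $X = \V(S) = \V(\sqrt{(S)}) = \V(I(X))$, which is exactly the repackaging you give in your final paragraph. Your primary argument, by contrast, uses only the formal inclusion-reversing property of $\V(-)$ together with $S \subseteq I(X)$, and so avoids the Nullstellensatz altogether. This is worth noting: the equality $X = \V(I(X))$ for closed $X$ is a purely formal fact about the Galois connection between subsets of $\V$ and subsets of $K[\V]$, whereas the Nullstellensatz is needed for the genuinely harder direction $I(\V(J)) = \sqrt{J}$. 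Either approach is fine here, but your direct one isolates exactly what is required.
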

\begin{proof}
        This follows immediately from the easy fact (noted above) that $S$ and
        $\sqrt{(S)}$ have same sets of zeros: $X = \V(S) = \V(\sqrt{(S)})$
\end{proof}

It also makes sense to define $I(A) \subset K[\V]$ -- the set of polynomials
vanishing on $A$ -- for any subset $A \subset \V$, not necessarily closed.  
Then we have the following.

\begin{lemma}
        \label{lemma:Zariski-closure}
        Let $A$ be a subset of $\V$. Then its Zariski closure $\overline{A}$ is
        equal to $\V(I(A))$.
\end{lemma}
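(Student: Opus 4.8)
The plan is to prove the two inclusions $\overline{A} \supseteq \V(I(A))$ and $\overline{A} \subseteq \V(I(A))$ separately, using only the definition of the Zariski topology (closed sets are exactly sets of the form $\V(S)$) together with Corollary~\ref{cor:closed-from-ideals}.

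First I would show that $\V(I(A))$ is a Zariski-closed set containing $A$. It is closed because it has the form $\V(S)$ with $S = I(A)$, and it contains $A$ because every $f \in I(A)$ vanishes on $A$ by the very definition of $I(A)$. Since $\overline{A}$ is by definition the smallest closed set containing $A$, this gives $\overline{A} \subseteq \V(I(A))$.

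For the reverse inclusion, I would use that $\overline{A}$ is closed, so by Corollary~\ref{cor:closed-from-ideals} we have $\overline{A} = \V(I(\overline{A}))$. The key observation is that $A \subseteq \overline{A}$ forces $I(\overline{A}) \subseteq I(A)$: a polynomial vanishing on the larger set $\overline{A}$ certainly vanishes on the subset $A$. Applying the inclusion-reversing property of $\V(\cdot)$ (noted in the remark after the definition of algebraic sets), we get $\V(I(A)) \subseteq \V(I(\overline{A})) = \overline{A}$. Combining the two inclusions yields $\overline{A} = \V(I(A))$.

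There is no real obstacle here; the statement is essentially a formal consequence of the Galois-type correspondence between subsets of $\V$ and ideals of $K[\V]$. The only point requiring the earlier machinery is the identity $\overline{A} = \V(I(\overline{A}))$ for closed sets, which is exactly Corollary~\ref{cor:closed-from-ideals} (itself a consequence of the Nullstellensatz, Theorem~\ref{thm:nullstellen}); everything else is bookkeeping with the definitions of $\V(\cdot)$ and $I(\cdot)$ and the fact that both are inclusion-reversing.
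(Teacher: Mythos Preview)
Your proof is correct and follows essentially the same route as the paper's: both arguments obtain $\overline{A}\subseteq \V(I(A))$ from $A\subseteq \V(I(A))$ together with minimality of the closure, and both obtain the reverse inclusion from $I(\overline{A})\subseteq I(A)$ and Corollary~\ref{cor:closed-from-ideals}. There is nothing to add.
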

\begin{proof}
 By definition, $\overline{A}$ is the smallest algebraic set (with respect to inclusion) 
 in $\V$ that contains $A$. Since $A \subset \V(I(A))$ and the latter is
 algebraic, we have that $\overline{A} \subset \V(I(A))$. For the other
 inclusion, note that $I(\overline{A}) \subset I(A)$, so that $\V(I(A)) \subset
 \V(I(\overline{A})) = \overline{A}$ (using
 Corollary~\ref{cor:closed-from-ideals} for the last equality).
\end{proof}




\subsubsection{Morphisms}

\label{sec:morphisms}

\begin{definition}[Morphism]
        \label{def:morphism}
       A map $\phi: X \to Y$  between two algebraic sets $X \subset \V$ and $Y
       \subset \W$ is called a \emph{morphism} if for any $f \in K[Y]$ one has:
       $f \circ \phi \in K[X]$. If both $X$ and $Y$ are defined over $k$, it is
       called a \emph{$k$-morphism} if $f \in k[Y]$ implies $f \circ \phi \in
       k[X]$.
\end{definition}

\begin{rmk}
       This gives also a natural notion of isomorphism. Two
       algebraic sets $X$ and $Y$ are isomorphic if there exists two morphisms
       $\phi: X \to Y$ and $\psi: Y \to X$ such 
       that $\phi \circ \psi = \mathrm{id}|_X$ and $\psi \circ \phi =
       \mathrm{id}|_Y$. 
       Similarly one defines the notion of $k$-isomorphism.
\end{rmk}

\begin{rmk}
        It follows from the definition that morphisms are continuous with respect to the
        Zariski topology. However, this property does not suffice to characterize
        morphisms between algebraic sets. This indicates that algebraic sets are
        more than sets with a particular topology; what is really more important
        is the sort of functions that one considers on them (here the
        polynomial functions). 
\end{rmk}

\begin{prop}
        Let $\phi$ be a $k$-morphism between the $k$-closed sets $X$ and $Y$.
        Then $\phi$ preserves the $k$-points: $\phi(X(k)) \subset Y(k)$.
\end{prop}
\begin{proof}
  For $Y \subset \W$   $k$-closed, let $T_1, \dots, T_m$ be a basis of $\W_k^*$. 
  Then for each $i = 1,\dots, m$ we have $T_i \circ \phi \in k[X]$, and
  thus for $x \in X(k)$ and $y = \phi(x)$ we have: $T_i(y) \in k$.
  This implies that $y \in \W_k$. 
\end{proof}

\begin{example}
   Let consider again the $\R$-closed subsets $X$ and $Y$ from
   Example~\ref{eg:circle-hyperb}. Then there is no $\R$-isomorphism
   from $X$ and $Y$, as this would contradict the fact that $X(\R)$ and $Y(\R)$ 
   are not homeomorphic with respect to the Hausdorff topology.
  However, these sets become isomorphic over $\C$: there exists a
   $\C$-isomor\-phism $\phi: X \to Y$. It is given by the change of
   variable $y \mapsto i y$.
\end{example}

\subsubsection{Extension of scalars}

Let $\ell/k$ be a field extension. We consider an algebraic closure
$L = \cell$  that contains $K = \overline{k}$. Of course, if
$\ell/k$ is algebraic, then $L = K$. Any $f \in k[\V]$ can be seen
as an element in $\ell[\V] = \ell[\V_L]$ and it follows that any $k$-closed set
$X  = \V_K(S)$ determines an $\ell$-closed subset $\V_L(S)$. This set is
said to obtain by \emph{extension of scalars}, and we use the notation
$X/\ell$ for it. Thus for any extension $\ell/k$, the notation $X(\ell)$
makes sense. 

\begin{rmk}
  \label{rmk:I-X-extends}
  It follows from Corollary~\ref{cor:closed-from-ideals} that $X/\ell =  \V_L(I_K(X))$.
\end{rmk}

\begin{lemma}
  Let $L$ be an algebraic closed field containing $K$, and $X \subset \V_K$
  be an algebraic set (where $K$ is algebraically closed). Then $X(K)$ is Zariski-dense in
  $X(L)$. 
        \label{lemma:Zariski-dense}
\end{lemma}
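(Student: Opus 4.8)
The plan is to show that $X(K)$ is Zariski-dense in $X(L)$ by reducing to the question of which polynomials vanish on $X(K)$. Recall that the Zariski closure of $X(K)$ inside $\V_L$ is, by Lemma~\ref{lemma:Zariski-closure}, equal to $\V_L(I_L(X(K)))$, where $I_L(X(K))$ denotes the ideal in $L[\V]$ of polynomials (with coefficients in $L$) vanishing on $X(K)$. On the other hand, by Remark~\ref{rmk:I-X-extends} we have $X(L) = X/L = \V_L(I_K(X))$. So it suffices to prove that $I_L(X(K)) \subseteq I_K(X) \cdot L[\V]$, i.e.\ that every polynomial with $L$-coefficients vanishing on all of $X(K)$ already lies in the ideal generated by $I_K(X)$; the reverse inclusion is trivial since $I_K(X)$ vanishes on $X = X(K)$.

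The key step is a linear-algebra / descent argument. Fix a basis $(\alpha_i)_{i \in J}$ of $L$ as a $K$-vector space, with $\alpha_{i_0} = 1$ for some index $i_0$. Given $f \in L[\V]$ vanishing on $X(K)$, expand its coefficients in this basis to write $f = \sum_i \alpha_i f_i$ with each $f_i \in K[\V]$, only finitely many nonzero. For any point $v \in X(K)$, the value $f(v) = \sum_i \alpha_i f_i(v)$ is a sum with $f_i(v) \in K$; since the $\alpha_i$ are $K$-linearly independent and $f(v) = 0$, we conclude $f_i(v) = 0$ for every $i$. Hence each $f_i \in I_K(X)$, and therefore $f = \sum_i \alpha_i f_i \in I_K(X) \cdot L[\V]$, as desired. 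Combining with the previous paragraph, $\V_L(I_L(X(K))) = \V_L(I_K(X)) = X(L)$, which is exactly the assertion that the Zariski closure of $X(K)$ in $X(L)$ is all of $X(L)$.

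The only genuine subtlety — and the step I would be most careful about — is making sure the ambient objects are set up so the argument makes sense: one is comparing the Zariski topology of $\V_L$ (an $L$-vector space) with the algebraic set $X(K)$ sitting inside it via the canonical inclusion $\V_K \subset \V_L$, and one needs $K[\V] \subset L[\V]$ to be the inclusion coming from $K \subset L$ together with $L[\V] = L \otimes_K K[\V]$, which is the standing convention in the excerpt. Once that bookkeeping is in place, the proof is the short linear-independence observation above; no appeal to the Nullstellensatz or to perfectness of the fields is needed, since we work directly with the ideals of vanishing polynomials rather than with radicals. (One does implicitly use that $K$ is algebraically closed only insofar as $X(K) = X$ is the "full" algebraic set — but that is part of the hypothesis.)
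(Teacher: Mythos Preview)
Your proof is correct and follows essentially the same route as the paper: reduce via Lemma~\ref{lemma:Zariski-closure} to showing that any $f\in L[\V]$ vanishing on $X(K)$ already vanishes on $X(L)$, then expand $f=\sum_i \alpha_i f_i$ in a $K$-basis $(\alpha_i)$ of $L$ and use $K$-linear independence to force each $f_i\in I_K(X)$. The only cosmetic difference is that you phrase the conclusion as an inclusion of ideals $I_L(X(K))\subseteq I_K(X)\cdot L[\V]$ before passing to zero-sets, whereas the paper argues directly that $f$ vanishes on $X(L)$; the content is the same.
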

\begin{proof}
        From Lemma~\ref{lemma:Zariski-closure} we have that $\overline{X(K)} =
        \V_L(I_L(X(K)))$. We want to show that $\V_L(I_L(X(K))) = X(L)$, the
        inclusion ``$\subset$'' being obvious. To prove ``$\supset$'', we 
        have to show that any $f \in L[\V]$ that vanishes on $X(K)$
  vanishes on the whole $X(L)$.
  Let $\left\{ \alpha_i \right\}$ be a basis for $L$ over $K$.
  Then any $f \in L[\V] = L \otimes_K K[\V]$ can be written as a finite sum
  $f = \sum_i \alpha_i f_i$ with $f_i \in K[\V]$. If $f$ vanishes on
  $X(K)$, it follows that for any $i$: $f_i(v) = 0$ $\forall v \in
  X(K)$. Thus $f_i \in I_K(X)$, and by definition of $X/L$ we obtain
  that every $f_i$ (and thus $f$) vanishes on $X(L)$. 
\end{proof}

To put things in perspective: the main situation we have in mind is
$k=\Q$ and $\ell = \R$. Then, for $\overline{\Q} \subset \C$ the field
of algebraic numbers we have that $X(\overline{\Q})$ is Zariski-dense in
$X(\C)$, for any algebraic set $X$ defined over $\Q$ (or even over
$\overline{\Q}$).

Note that working over algebraically closed fields is important in
Lem\-ma~\ref{lemma:Zariski-dense}, as the following counterexample shows.
\begin{example}
       Let $\V_\Q = \Q$.  Then for $X = \V(x^2 + 1)$ we have $X(\Q) =
       \emptyset$, which is certainly not dense in $X = \left\{ \pm i \right\}$. This example also serves
       as an illustration of Remark~\ref{rmk:f-on-k-points}, since trivially
       every polynomial $f \in k[\V]$ vanishes on $X(\Q)$.
\end{example}

\subsubsection{Principal open subsets}

Let $f \in k[\V]$ be nonzero. We define the \emph{principal open} subset
(defined over $k$) $D(f) \subset \V$ to be the complement 
$\V \setminus \V(\left\{ f \right\})$. 
It is indeed open in the Zariski topology. Such a subset of $\V$ will be
considered in a way similar as an algebraic set over $k$. Again, the
inclusion in $\V$ induces a Zariski topology on $D(f)$.
The ring of regular functions on $D(f)$ is defined to be the $K$-algebra
$K[\V]$ in which we invert $f$:
\begin{align}
        K[D(f)] &=   \left\{ \frac{g}{f^s} \;|\; g \in K[\V], s \in \Z  \right\}.
\end{align}
One defines similarly $k[D(f)]$ by inverting $f$ in $k[\V]$. 
Any closed subset $X \subset D(f)$ also comes equipped with a ring of
regular functions, defined as $K[X] = K[D(f)]/I(X)$, where $I(X)$ is the
ideal in $D(f)$ of functions vanishing on $X$.

\begin{definition}[Affine variety]
        \label{def:principal-open}
       We will called a closed subset $X \subset D(f)$ in some principal  
       open subset $D(f) \subset \V$ (with $f \in K[\V]$)
       an \emph{affine variety}. We say that it is
       \emph{defined over $k$} if it is $k$-closed in $D(f)$ and $f \in k[\V]$. 
\end{definition}

\begin{rmk}
        \label{rmk:algebraic-affine-var}
Note that since $\V = D(c)$ for any nonzero constant polynomial $c \in K[\V]$,
we have that algebraic sets are affine varieties in the sense of this definition.
\end{rmk}

Using the ring of regular functions $K[X]$ (and $k[X]$), one can define
similarly to Definition~\ref{def:morphism} the notion of morphism (resp.\
$k$-morphism) between affine varieties.
\begin{prop}
       Every affine variety over $k$ is isomorphic to an algebraic set over $k$. 
\end{prop}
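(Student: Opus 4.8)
The plan is to exhibit the classical ``graph trick'': given an affine variety $X \subset D(f) \subset \V$ defined over $k$, introduce one extra coordinate whose purpose is to record the value of $1/f$, and realize $X$ as a genuine $k$-closed subset of the enlarged space $\V \times K$. Concretely, let $\W = \V \oplus \V'$ where $\V' = k$ is one-dimensional with coordinate $T$, so $K[\W] = K[\V][T]$. Consider the set
\begin{align*}
  \widetilde{X} = \left\{ (v, t) \in \W \;|\; v \in X, \; t \cdot f(v) = 1 \right\}.
\end{align*}
Since $X$ is $k$-closed in $D(f)$, it is cut out inside $D(f)$ by some ideal $I_k(X) \subset k[D(f)]$; clearing denominators, each generator can be taken of the form $g/f^s$ with $g \in k[\V]$, and on the locus $Tf = 1$ the function $g/f^s$ agrees with $g \cdot T^s \in k[\W]$. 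Thus $\widetilde{X}$ is defined inside $\W$ by the polynomial $Tf - 1$ together with the polynomials $g T^s$, all of which lie in $k[\W]$; hence $\widetilde{X}$ is $k$-closed, i.e.\ an algebraic set over $k$ in the sense of the earlier definitions.

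Next I would write down the two mutually inverse $k$-morphisms. Let $\phi: \widetilde{X} \to X$ be the restriction of the projection $(v,t) \mapsto v$; this is visibly a $k$-morphism since each coordinate function on $\V$ pulls back to a coordinate function on $\W$. In the other direction, define $\psi: X \to \widetilde{X}$ by $v \mapsto (v, 1/f(v))$; this is well-defined because $f$ is invertible on $D(f) \supset X$, and it is a $k$-morphism because the last coordinate $T$ pulls back to $1/f \in k[D(f)] = k[X]$ (restricted to $X$), while the $\V$-coordinates pull back to themselves. One checks directly that $\phi \circ \psi = \mathrm{id}_X$ and $\psi \circ \phi = \mathrm{id}_{\widetilde{X}}$, using that $t = 1/f(v)$ is forced by the equation $tf(v)=1$ on $\widetilde{X}$. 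Therefore $X$ and $\widetilde{X}$ are $k$-isomorphic, and $\widetilde{X}$ is an algebraic set over $k$, which is exactly the claim.

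The one point that needs a little care -- and which I expect to be the main (though still minor) obstacle -- is bookkeeping the passage between $k[D(f)]$ and $k[\W]$: one must verify that every regular function on $X$ (as a subset of $D(f)$) really does correspond, after multiplying by a suitable power of $f$ and substituting $T = 1/f$, to the restriction of a polynomial in $k[\W]$, and conversely that the equation $Tf-1$ together with the pulled-back generators of $I_k(X)$ generate the full ideal of $\widetilde{X}$ in $k[\W]$ (so that $\widetilde{X}$ is genuinely $k$-closed and not merely set-theoretically cut out). This is a routine computation with the localization $k[\V][1/f] \cong k[\W]/(Tf-1)$, an isomorphism of $k$-algebras that identifies $k[X]$ with $k[\widetilde{X}]$ and thereby makes the isomorphism of affine varieties transparent. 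No deep input is required beyond the definitions of morphism and of the ring of regular functions on a principal open subset already set up in the text.
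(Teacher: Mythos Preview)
Your proposal is correct and follows essentially the same approach as the paper: both use the graph trick, enlarging $\V$ by a one-dimensional factor with coordinate $T$, imposing the equation $fT - 1 = 0$, and writing down the mutually inverse maps $v \mapsto (v, 1/f(v))$ and $(v,t) \mapsto v$. The only organizational difference is that the paper first establishes the $k$-isomorphism $D(f) \cong \W(fT-1)$ and then remarks that closed subsets correspond under it, whereas you work directly with a given closed $X \subset D(f)$ and build $\widetilde{X}$ explicitly; your version actually supplies more detail (clearing denominators, the localization $k[\V][1/f] \cong k[\W]/(Tf-1)$) than the paper's sketch does.
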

\begin{proof}
        The idea is the following.
        Let $D(f)$ be principal open in $\V$. Let $\W_k = \V_k \oplus k$, so that
        any element in $\W$ can be represented as a couple $(x, t)$ with $x \in
        \V$ and $t \in K$. We denote by $T$ an element in the dual $k^*$ (thus
        $T(t) = t$). Let $X \subset \W$ be the $k$-closed set $\W(f \cdot T -
        1)$. Then the map $\phi: D(f) \to X$, $x \mapsto (x, \frac{1}{f(x)})$ is a
        $k$-isomorphism, with inverse $\phi^{-1}: (x, t) \mapsto x$.
        It follows that closed subsets in $D(f)$ are isomorphic to closed subset
        in $X$, which are closed in $\W$.
\end{proof}

This proposition shows that we could have defined affine varieties simply as
algebraic sets. However, when dealing with algebraic groups it is natural to
consider principal open subset as well, as we will see in the next section.


\begin{rmk}
        \label{rmk:variety}
        We have given here an ad hoc definition of affine varieties, that fits  
        well the purpose of studying linear algebraic groups. A first goal when 
        studying algebraic geometry is to give a more intrinsic notion of
        ``variety'', which does not limit to affine varieties.   
        Projective varieties are the first examples of nonaffine varieties that
        come to mind. 
\end{rmk}

\section{Algebraic groups}

Let $V_k$ be a vector space over $k$ of dimension $n$, and denote $\W_k =
\End(\V_k)$ its ring of linear endomorphisms. It is a $k$-vector space
of dimension $n^2$, and we can consider 
the ring of regular functions $k[\W]$. The determinant $\det: \W_k \to
k$ is a polynomial function, and thus the \emph{general linear} group  
$\GL(\V)$ is a principal open subset (defined over $k$) in $\W = \W_K$,
namely corresponding to $D(\det) \subset \W$. 
Fixing a basis $x_1,\dots, x_n$ of $\V_k$, we obtain a basis
$T_{11}, \dots, T_{nn} \in \W_k^*$.  With this identification, we 
have 
\begin{align}
  k[\GL(\V)] &= k\left[T_{11},\dots, T_{nn},
    \frac{1}{\det(T_{ij})}\right].
  \label{eq:regul-funct-GL}
\end{align}

\begin{definition}[Algebraic group]
  A subgroup $\G \subset \GL(\V)$ that is $k$-closed (as a subset of
  the principal open set $\GL(\V)$)  is called a \emph{(linear) algebraic group}
  over $k$, or simply a \emph{$k$-group}.
\end{definition}

\begin{example}
  Trivially, the general linear group $\GL(\V)$ is an algebraic
  $k$-group. When $\V_k = k$ has dimension one, we write $\Gm$ (or
  $\Gm/k$ if we want to emphasize the field of definition) for
  $\GL(\V)$, and we call it the \emph{multiplicative group} (over $k$).  
\end{example}

\begin{rmk}
        \label{rmk:k-points-GL}
        Note that the $k$-points  $\GL(\V)(k)$ are given by $\GL(\V_k)$.
\end{rmk}

\begin{example}
  The \emph{special linear} group $\SL(\V) = \W(\det - 1)$ is a
  $k$-group.
\end{example}

\begin{example}
  Let $x \in \GL(\V_k)$. The subset of automorphisms of $\V$ commuting 
  with $x$:
  \begin{align*}
    \left\{ g \in \GL(\V) \; |\; gx = xg  \right\},
  \end{align*}
  is an algebraic $k$-group. To see that it is algebraic, one can choose
  a basis $T_{11}, \dots, T_{nn}$ for $\End(\V_k)$. The matrix multiplication
  by $x$ is given coordinate-wise by polynomial function with
  coefficients in $k$.  
\end{example}

An important feature of any $k$-group $\G$ is that for any field extension
$\ell/k$, the rational points $\G(\ell)$ form a group. This can be
easily check on each of the examples above. 
For the proof in general, we first note the following lemma.

\begin{lemma}
  Let $A \subset \GL(\V)$ be a subgroup (not necessarily closed). Then
  its Zariski closure $\overline{A} \subset \GL(\V)$ is a subgroup (in
  particular it is an algebraic group).
  \label{lemma:closure-is-group}
\end{lemma}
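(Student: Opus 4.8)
The plan is to use the continuity of the group operations together with the fact that Zariski closure commutes nicely with the relevant maps. Write $m\colon \GL(\V)\times\GL(\V)\to\GL(\V)$ for the multiplication morphism and $\iota\colon\GL(\V)\to\GL(\V)$ for inversion; both are morphisms of affine varieties, hence continuous for the Zariski topology. I want to show $\overline{A}$ is closed under $m$ and under $\iota$. The key point throughout is the standard fact (which follows from Lemma~\ref{lemma:Zariski-closure}, or directly from continuity of morphisms) that if $\phi$ is continuous and $\phi(A)\subset B$, then $\phi(\overline A)\subset\overline B$.

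First I would handle inversion, which is the easy half: since $\iota$ is a homeomorphism of $\GL(\V)$ (it is its own inverse) and $\iota(A)=A$ because $A$ is a subgroup, applying $\iota$ to $\overline A$ gives $\iota(\overline A)=\overline{\iota(A)}=\overline A$. So $\overline A$ is stable under taking inverses. For multiplication I would proceed in two steps, the usual trick for showing a closure is multiplicatively closed. Fix $a\in A$. Left translation $\mu_a\colon x\mapsto ax$ is a Zariski homeomorphism of $\GL(\V)$, and $\mu_a(A)=A$, so $\mu_a(\overline A)=\overline A$; that is, $a\,\overline A\subset\overline A$ for every $a\in A$. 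Now fix $b\in\overline A$: the set $\{x\in\GL(\V)\mid xb\in\overline A\}$ is the preimage of the closed set $\overline A$ under the continuous map $x\mapsto xb$ (right translation $\rho_b$, again a homeomorphism), hence it is Zariski closed; by the previous sentence it contains $A$, so it contains $\overline A$. Therefore $\overline A\,\overline A\subset\overline A$.

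Combining the two, $\overline A$ is a subgroup of $\GL(\V)$, and being Zariski closed in $\GL(\V)$ it is an algebraic group (over $K$) by definition. I would remark that since $1\in A\subset\overline A$, the identity is present, so $\overline A$ is genuinely a subgroup and not merely a subsemigroup.

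The only real subtlety — the step I would be most careful about — is the passage "$\phi$ continuous, $\phi(A)\subset B$, $B$ closed $\Rightarrow$ $\phi(\overline A)\subset \overline B$," applied in particular to the translations: one must make sure the relevant maps really are morphisms of the affine variety $\GL(\V)$ in the sense of Section~\ref{sec:morphisms}, so that Zariski continuity is available. This is where the earlier observation that $\GL(\V)=D(\det)$ is treated as an affine variety, with regular functions obtained by inverting $\det$, is used: translations and inversion are given coordinatewise by polynomials in the $T_{ij}$ and in $1/\det$, hence are genuine morphisms. Everything else is a formal consequence of $A$ being a subgroup and of closure being monotone and idempotent.
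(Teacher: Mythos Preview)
Your proof is correct and follows essentially the same route as the paper: inversion is handled via the homeomorphism $\iota$ commuting with closure, and multiplication via the two-step argument (first $a\overline{A}\subset\overline{A}$ for $a\in A$ by left translation, then extend to all of $\overline{A}$ using that right translation is continuous). The paper phrases the second step as $\overline{Ay}=\overline{A}\,y\subset\overline{A}$ rather than via the preimage $\rho_b^{-1}(\overline{A})$, but these are the same idea.
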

\begin{proof}
        Trivially, $1 \in \overline{A}$. Let $\phi: \GL(\V) \to \GL(\V)$ be the
        map given by $\phi(x) = x^{-1}$. It is an isomorphism of algebraic
        varieties, and in particular it is a homeomorphism for the Zariski
        topology. Then $\phi(\overline{A}) = \overline{\phi(A)} = \overline{A}$
        (since $\phi(A) = A$), which proves that $\overline{A}$ is closed under
        inversion. One proves similarly that $x \overline{A} = \overline{A}$ for
        any $x \in A$. Put a bit differently: $A y \subset \overline{A}$ for any
        $y \in \overline{A}$. But then $\overline{A y} = \overline{A} y$ must be
        contained in $\overline{A}$.
\end{proof}

\begin{prop}
  Let $\G$ be a $k$-group, and  $\ell/k$ be any field extension. Then
  $\G(\ell)$ is a group. In particular, the extension of scalars
  $\G / \ell$ is again an algebraic group. 
\end{prop}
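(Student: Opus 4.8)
The plan is to reduce the statement about $\G(\ell)$ being a group to the two facts established just above: that the Zariski closure of a subgroup of $\GL(\V)$ is an algebraic group (Lemma~\ref{lemma:closure-is-group}), and that $k$-points of $k$-closed sets behave well under morphisms. The key observation is that $\G$ is, by definition, a $k$-closed subgroup of $\GL(\V)$, hence in particular $\G = \G(K)$ is a subgroup of $\GL(\V)(K) = \GL(\V_K)$; so what we really need is that $\G(\ell) = \V_\ell \cap \G$ is closed under the group operations inherited from $\GL(\V_\ell) = \GL(\V)(\ell)$.

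First I would observe that the multiplication map $m: \GL(\V) \times \GL(\V) \to \GL(\V)$ and the inversion map $\iota: \GL(\V) \to \GL(\V)$ are $k$-morphisms (both are given, in the coordinates $T_{ij}$ of \eqref{eq:regul-funct-GL}, by polynomial expressions in the entries with $\det$ inverted, with coefficients in the prime field $\Q \subset k$). Since $\G$ is a $k$-group, the restrictions $m|_{\G \times \G}$ and $\iota|_{\G}$ land in $\G$; one checks that these restrictions are again $k$-morphisms onto $\G$ (composition of $k$-morphisms is a $k$-morphism, and $\G \times \G$ is $k$-closed in $\GL(\V) \times \GL(\V)$). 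Then, by the proposition on $k$-morphisms preserving $k$-points (the one stating $\phi(X(k)) \subset Y(k)$, applied with the base field $\ell$ in place of $k$, which is legitimate since $\G/\ell$ is $\ell$-closed and all the maps are $\ell$-morphisms), we get $m(\G(\ell) \times \G(\ell)) \subset \G(\ell)$ and $\iota(\G(\ell)) \subset \G(\ell)$. Also $1 \in \G(\ell)$ since $1 \in \G \cap \V_k \subset \G \cap \V_\ell$. Hence $\G(\ell)$ is a subgroup of $\GL(\V_\ell)$.

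For the last sentence, that $\G/\ell$ is again an algebraic group: by definition of extension of scalars, $\G/\ell = \V_L(I_K(\G))$ is $\ell$-closed in $\GL(\V_L)$, and we have just shown that, viewed inside $\GL(\V_L)$, it has a group structure on its $L$-points; but $(\G/\ell)(L) = \G(L)$, so $\G/\ell$ is a $K$-group in the original sense of the definition — a $K$-closed (indeed $\ell$-closed) subgroup of $\GL(\V_L)$.

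The main obstacle I anticipate is a bookkeeping subtlety rather than a deep one: one must make sure that the morphism-preserving-$k$-points proposition is being invoked over the right field, and that the multiplication map really restricts to a morphism $\G \times \G \to \G$ of varieties (not merely a set map) — this requires knowing that $\G \times \G$ carries a sensible variety structure with $k[\G \times \G]$ generated appropriately, which the excerpt has only sketched for algebraic sets and principal opens. If one wants to avoid invoking products of varieties, an alternative is available: fix $g_0 \in \G(\ell)$ and consider left translation $L_{g_0}: \GL(\V) \to \GL(\V)$, $x \mapsto g_0 x$; this is a single-variable $k$-morphism (in fact an $\ell$-morphism, with coefficients now in $\ell$) that maps $\G$ to $\G$ — because $g_0 \in \G \subset \GL(\V_\ell)$ and $\G$ is a group over $K$ — and hence maps $\G(\ell)$ into $\G(\ell)$; combined with the analogous argument for $\iota$, this gives closure under the group operations without ever forming a product. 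I would present this second, cleaner route in the actual proof.
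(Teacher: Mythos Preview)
Your argument has a genuine gap in the case where $\ell$ is not contained in $K = \overline{k}$ (for instance $k = \Q$, $\ell = \R$). In your alternative route you fix $g_0 \in \G(\ell)$ and claim that $L_{g_0}$ maps $\G$ to $\G$ ``because $g_0 \in \G$ and $\G$ is a group over $K$''. But $g_0$ lies in $\G(\ell) \subset \G(L)$ with $L = \overline{\ell}$, not a priori in $\G(K)$; so you are implicitly using that $\G(L)$ is closed under multiplication, which is precisely the statement to be proved over the algebraically closed extension $L$. The same circularity appears in your first route: in order to speak of $m|_{\G/\ell \times \G/\ell}$ as an $\ell$-morphism \emph{into} $\G/\ell$ and then invoke the preservation of $\ell$-points, you must already know $m(\G(L) \times \G(L)) \subset \G(L)$. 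The obstacle you anticipated (products of varieties) is not the real one.

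The paper's proof addresses exactly this missing step. The case $\ell \subset K$ is immediate, since then $\G(\ell) = \GL(\V_\ell) \cap \G(K)$ is an intersection of two groups. For general $\ell$ one reduces to showing that $\G(L)$ is a group, and here the two lemmas you named but never actually used come in: Lemma~\ref{lemma:Zariski-dense} gives that $\G(K)$ is Zariski-dense in $\G(L)$, and since $\G(K)$ is a subgroup of $\GL(\V_L)$, Lemma~\ref{lemma:closure-is-group} forces its closure $\G(L)$ to be a group. Your morphism approach becomes correct once you insert this density argument to promote $m: \G(K)\times\G(K) \to \G(K)$ to $m: \G(L)\times\G(L) \to \G(L)$; after that, your reasoning for $\G(\ell)$ goes through.
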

\begin{proof}
        The result is clear when $\ell \subset K = \overline{k}$, since in this
        case $\G(\ell) = \GL(\V_\ell) \cap \G(K)$.
        In the general case, it suffices to prove that $\G(L)$ is a group for $L
        = \overline{\ell}$ and then to apply the same argument. By
        Lemma~\ref{lemma:Zariski-dense}, $\G(K)$ is Zariski-dense in $\G(L)$
        and thus the result follows directly from
        Lemma~\ref{lemma:closure-is-group}.
 \end{proof}

\begin{cor}
        Let $\G$ be an algebraic $\Q$-group. Then $\G(\R)$ is a locally compact group.
\end{cor}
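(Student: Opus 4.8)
The plan is to exhibit $\G(\R)$ as a closed subgroup of the locally compact group $\GL(\V_\R) = \GL_n(\R)$, and then invoke the fact that a closed subgroup of a locally compact Hausdorff group is again locally compact Hausdorff. First I would recall from the previous proposition that $\G(\R)$ is a group, and that $\G$ being a $\Q$-group means $\G \subset \GL(\V)$ is $\Q$-closed in the principal open set $\GL(\V) \subset \W$; by definition this means $\G = \GL(\V) \cap \V(S)$ for some $S \subset \Q[\W]$ (equivalently $S \subset \R[\W]$ by extension of scalars), so that $\G(\R) = \GL(\V_\R) \cap \V_\R(S)$.

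Next I would equip $\W_\R = \mathrm{Mat}(n \times n, \R)$ with its Hausdorff (metric) topology and observe that $\GL(\V_\R) = \GL_n(\R)$, carrying the subspace topology from $\W_\R$, is a locally compact group — this is stated as an example earlier in the excerpt. The key point is then that $\G(\R)$ is \emph{closed} in $\GL(\V_\R)$ for the Hausdorff topology: indeed $\V_\R(S) = \bigcap_{f \in S} f^{-1}(0)$ is closed in $\W_\R$ because each $f$ is a polynomial, hence continuous, as recorded in Proposition~\ref{prop:R-points-closed}; intersecting with $\GL(\V_\R)$ shows $\G(\R)$ is closed in $\GL(\V_\R)$.

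Finally I would conclude via the general topology fact that a closed subgroup $H$ of a locally compact Hausdorff group $G$ is itself locally compact Hausdorff: Hausdorffness is inherited by any subspace, and local compactness follows because if $K$ is a compact neighbourhood of $1$ in $G$ with $K = \overline{U}$ for $U$ open, then $K \cap H$ is a neighbourhood of $1$ in $H$ and is compact, being a closed subset ($H$ closed, $K$ closed) of the compact set $K$. Applying this with $G = \GL(\V_\R)$ and $H = \G(\R)$ gives the result. The only mildly delicate point is keeping straight which topology is in play — everything here is the Hausdorff (real-analytic) topology, not the Zariski topology, so I would state that explicitly at the outset; beyond that the argument is routine, and I do not expect a real obstacle.
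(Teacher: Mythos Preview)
Your proposal is correct and follows essentially the same approach as the paper: use the previous proposition for the group structure, invoke Proposition~\ref{prop:R-points-closed} to see that $\G(\R)$ is closed in $\GL(\V_\R) \cong \GL_n(\R)$, and conclude that a closed subgroup of a locally compact Hausdorff group is again locally compact Hausdorff. The paper's proof is more terse, but you have simply unpacked the same steps in greater detail.
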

\begin{proof}
 The proposition shows that $\G(\R)$ is indeed a group. As a subset of
 $\End(\V_\R)$ it is Hausdorff, and locally compact since it is closed
 in $\GL(\V_\R) \cong \GL_n(\R)$ (cf. Proposition~\ref{prop:R-points-closed}). 
\end{proof}

\begin{rmk}
        Of course the result remains true for any $\R$-group.
\end{rmk}

\begin{rmk}
  The same argument shows that $\G(\C)$ is also a locally compact group. 
  And so is $\G(\Q_p)$, where $\Q_p$ denotes the field of $p$-adic
  numbers (for $p \in \N$ a prime).
\end{rmk}

\begin{definition}[Morphism of groups]
  \label{def:morphism-group}
  Let $\G \subset \GL(\V)$ and $\H \subset \GL(\W)$ be two algebraic
  $k$-groups. A \emph{$k$-morphism (of groups)} $\phi: \G \to \H$ 
  is a group homomorphism that is also a $k$-morphism of affine varieties. 
\end{definition}

\begin{prop}
        \label{prop:ker-k-group}
        Let $\phi: \G \to \H$ be a $k$-morphism of groups.
        Then the kernel $\ker(\phi)$ is a $k$-group.
\end{prop}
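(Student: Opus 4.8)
The plan is to realize $\ker(\phi)$ as a $k$-closed subset of $\GL(\V)$ and then invoke the fact (proved earlier) that a $k$-closed subgroup of $\GL(\V)$ is an algebraic $k$-group. First I would recall that $\H \subset \GL(\W)$ comes with a fixed basis giving coordinate functions, so the identity element $1_\H \in \GL(\W)$ is cut out inside $\GL(\W)$ by finitely many polynomial equations with coefficients in $k$: namely, writing $T_{ij} \in \W_k^*$ for the coordinate functions on $\End(\W)$, the singleton $\{1_\H\}$ is the $k$-closed set $\V(\{T_{ij} - \delta_{ij}\})$ intersected with $\GL(\W)$. Hence $\{1_\H\}$ is a $k$-closed subset of the affine variety $\GL(\W)$, defined over $k$.

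Next I would use that $\phi: \G \to \H$ is in particular a $k$-morphism of affine varieties, hence continuous for the $k$-Zariski topology and, more to the point, pulls back $k$-regular functions to $k$-regular functions. Therefore the preimage $\phi^{-1}(\{1_\H\})$ of a $k$-closed set is $k$-closed in $\G$: concretely, if $\{1_\H\} = \V(S)$ for some $S \subset k[\H]$ (inside $\GL(\W)$), then $\ker(\phi) = \phi^{-1}(\{1_\H\}) = \V(\{f \circ \phi \mid f \in S\})$, and each $f \circ \phi$ lies in $k[\G]$ by the definition of a $k$-morphism. Since $\G$ is itself $k$-closed in $\GL(\V)$, a $k$-closed subset of $\G$ is also $k$-closed in $\GL(\V)$.

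Finally, $\ker(\phi)$ is a subgroup of $\GL(\V)$ because $\phi$ is a group homomorphism (the kernel of any group homomorphism is a subgroup), and we have just shown it is $k$-closed in $\GL(\V)$. By the definition of algebraic group, a $k$-closed subgroup of $\GL(\V)$ is a $k$-group, so $\ker(\phi)$ is a $k$-group, as claimed.

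The only mildly delicate point — and the one I would be careful to state correctly rather than the real obstacle — is making sure the pullback of a $k$-closed set along a $k$-morphism really is $k$-closed, i.e.\ that $f \circ \phi \in k[\G]$ when $f \in k[\H]$; but this is precisely the content of the definition of $k$-morphism (Definition~\ref{def:morphism}, as adapted to groups in Definition~\ref{def:morphism-group}), so once the setup is phrased correctly the proof is immediate. No genuine obstacle arises; the argument is essentially a formal consequence of the definitions together with the observation that a point of $\GL(\W)$ is a $k$-closed subset.
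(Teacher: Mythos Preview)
Your proof is correct and follows essentially the same approach as the paper: show that $\{e\}$ is $k$-closed in $\H$, then use that a $k$-morphism is continuous for the $k$-Zariski topology so that $\ker(\phi)=\phi^{-1}(\{e\})$ is $k$-closed in $\G$. Your version is simply more explicit about why $\{e\}$ is $k$-closed and about how the pullback of defining equations works.
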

\begin{proof}
       Let $e \in \H$ be the neutral element.
       The singletons ${e}$ is a $k$-closed subset of $H$, and since $\phi$
       is continuous with respect to the $k$-Zariski topology, it follows that
       $\ker(\phi) = \phi^{-1}({e})$ is $k$-closed in $\G$.
\end{proof}

\begin{example}
  The inclusion $\SL(\V) \subset \GL(\V)$ is a $k$-morphism. 
\end{example}

\begin{example}
 The determinant map $\det: \GL(\V) \to \Gm$ is a $k$-morphism of
 groups. Its kernel $\ker(\det)$ equals $\SL(\V)$.
\end{example}

A more surprising result is the fact that the image of a morphism is a closed subgroup.
See \cite[\S 7.4]{Humph75} for the proof.

\begin{prop}
        \label{prop:image-closed}
       Let $\phi: \G \to \H$ be a $k$-morphism. Then $\phi(\G)$ is a
       $k$-subgroup of $\H$. 
\end{prop}

Note that this result has no analogue in the context of topological groups, as
the following counterexample shows.
\begin{example}
 Let $T = (\R/\Z)^2$ be the two-dimensional torus. Then for any irrational
 $\alpha \in \R \setminus \Q$, the map $\phi: \R \to T$ defined by $\phi(x) =
 (x, \alpha x)$ has a non-closed image in $T$.
\end{example}

We close this section with a remark concerning the generality of our notion of
``algebraic group''. 

\begin{rmk}
        \label{rmk:affine-vs-linear}
        We have limited our discussion to \emph{linear} algebraic groups, that is,  
        closed subgroups of $\GL(\V)$. Another point of view --  more
        general -- is to define an algebraic group as a variety together with a
        group structure ``compatible'' with the structure of variety. This has
        the advantage to make sense also for variety that are not necessarily
        affine. As notable examples of projective algebraic groups we can
        mention elliptic curves.  However, it turns out that every
        \emph{affine}
        algebraic group is isomorphic to a linear algebraic group
        (see~\cite[\S~8.6]{Humph75}).
\end{rmk}

\begin{example}[Additive group]
        \label{add-group}
       Let $\V_k = k$ and $\V = K \otimes_k \V_k$. The  
       addition on $\V$ makes it an algebraic group defined over $k$.
       It can be identified as a subgroup of $\GL(K^2)$ via the isomorphism
       \begin{align}
               x \mapsto \left( \begin{array}[h]{cc}
                       1 & x \\
                       0 & 1
               \end{array}\right).
       \end{align}
       The group $(K, +)$ is referred as to the \emph{additive group}, and
       denoted by $\G_\mathrm{a}$.
\end{example}
\begin{rmk}
       More generally, one shows that for $\V$ of dimension $n$, 
       the group $(\V, +)$ is an algebraic group, 
       isomorphic to the $n$-factors product $\G_\mathrm{a} \times \dots \times \G_\mathrm{a}$. 
       From the preceding example, the latter can be embedded in
       $\GL(\W^{\oplus n})$, with $\W \cong K^2$.
\end{rmk}

\section{Arithmetic groups}

Let $\G \subset \GL(\V)$ be an algebraic $\Q$-group. We would like to produce
discrete subgroups in $\G(\R)$. To this end we need to fix some $\Z$-lattice in
$\V_\Q$.

\begin{definition}[$\Z$-lattice]
        Let $\V_\Q$ be a $\Q$-vector space of dimension $n$. A \emph{$\Z$-lattice} 
        $L \subset \V_\Q$ is a subgroup of rank $n$ (as a $\Z$-module) such that
        $\Q \otimes L = \V_\Q$ (i.e., $L$ spans $\V_\Q$ over $\Q$).
\end{definition}
In particular the $\Z$-lattice $L$ is a Euclidean lattice in $\V_\R \cong \R^n$
in the sense of Chapter 1. When the context is clear,
we will often just say ``lattice'' instead ``$\Z$-lattice''.

\begin{rmk}
        \label{rmk:generalization-lattices}
        In Chapter 2 we have generalized the notion of Euclidean lattice to
        extend it to the context of subgroup of more general topological groups
        than $G = \R^n$. The definition of $\Z$-lattice suggests another type of
        generalization -- more algebraic.
\end{rmk}

\begin{lemma}
       Let $L$ and $L'$ be two $\Z$-lattices in $\V_\Q$. Then there exists $m
       \in \Z$ such that $m L \subset L' \subset \frac{1}{m} L$. 
        \label{lemma:Z-lattices-commens}
\end{lemma}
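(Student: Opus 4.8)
The plan is to reduce the statement to a common-denominator computation with $\Z$-bases. First I would fix a $\Z$-basis $x_1,\dots,x_n$ of $L$ and a $\Z$-basis $y_1,\dots,y_n$ of $L'$; by the definition of a $\Z$-lattice, both are also $\Q$-bases of $\V_\Q$. Expressing each $y_j$ in the basis $(x_i)$ as $y_j=\sum_i a_{ij}x_i$ with $a_{ij}\in\Q$ and clearing denominators, I obtain a positive integer $d$ with $d\,a_{ij}\in\Z$ for all $i,j$, so that $d\,y_j\in L$ for every $j$ and hence $dL'\subset L$. Exchanging the roles of $L$ and $L'$ and repeating the argument gives a positive integer $e$ with $eL\subset L'$.

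Then I would set $m=de$ (any common multiple of $d$ and $e$ works just as well). Since $e\mid m$, writing $m=ek$ gives $mL=k(eL)\subset kL'\subset L'$, using that $L'$ is a subgroup; and since $d\mid m$, the inclusion $dL'\subset L$ yields $mL'\subset L$, which is exactly $L'\subset\tfrac{1}{m}L$. Together these are precisely the asserted inclusions $mL\subset L'\subset\tfrac{1}{m}L$.

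I do not expect any real obstacle here: the argument is elementary linear algebra over $\Q$ combined with the trivial fact that any integer multiple of a subgroup is contained in that subgroup. The only points requiring a bit of care are choosing $m$ so that it absorbs both denominators simultaneously and keeping the directions of the inclusions straight. (Alternatively, one could observe that $L+L'$ is again a $\Z$-lattice containing $L$ and $L'$ with finite index, and invoke the fact that a finite-index subgroup $H\le M$ of a $\Z$-lattice satisfies $(M:H)\,M\subset H$; but the computation above is shorter and self-contained.)
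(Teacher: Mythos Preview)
Your proof is correct and follows essentially the same approach as the paper: choose $\Z$-bases of $L$ and $L'$, clear denominators in each direction to obtain integers $d,e$ with $dL'\subset L$ and $eL\subset L'$, and then take $m=de$. The paper's argument is identical up to notation (it uses $\alpha,\beta$ in place of your $d,e$), and you even spell out the final verification of the two inclusions a bit more carefully than the paper does.
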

\begin{proof}
       Let $x_1, \dots, x_n$ be a $\Z$-basis of $L$, and $x'_1,\dots, x_n'$ a
       $\Z$-basis of $L'$. Both are $\Q$-bases for $\V_\Q$. In particular, any
       $x_i'$ can be written as a $\Q$-linear combination 
       \begin{align*}
               x_i' &= \sum_{j=1}^n \alpha_{ij} x_j.
       \end{align*}
       Let $\alpha$ be the product of all denominators of  the $\alpha_{ij} \in
       \Q$. Then for every $i = 1,\dots, n$ we have $\alpha x_i' \in L$, so that
       that $\alpha L' \subset L$. Similarly, one proves the existence of
       $\beta \in \Z$ such that $\beta L \subset L'$, and the result follows for
       $m = \alpha \beta$.
\end{proof}

Note that $\GL(\V_\Q)$ (and thus any of its subgroups) acts naturally on $\V_\Q$. 
Given a lattice $L \subset \V_\Q$ and an algebraic $\Q$-group $\G \subset
\GL(\V)$ we can define the stabilizer subgroup 
\begin{align}
        \G_L = \left\{ g \in \G(\Q) \;|\; g L = L \right\}.
        \label{eq:stabil-lattice}
\end{align}
It follows that any $g \in \G_L$ acts as an automorphism of the
free $\Z$-module $L$. In particular, $\det(g) = \pm 1$.

\begin{example}
        For $\G = \GL(\V)$, we have that $\G_L$ is the full automorphism group
        $\GL(L)$. In particular, fixing a $\Z$-basis of $L$ we obtain an
        identification $\GL(L) = \GL_n(\Z)$. It follows that $\GL(L)$ is
        discrete in $\GL(\V_\R) \cong \GL_n(\R)$. 
\end{example}

\begin{prop}
        \label{prop:arithm-implies-discrete}
        The group $\G_L$ is discrete in $\G(\R)$. 
\end{prop}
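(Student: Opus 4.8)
The plan is to reduce the statement to the already-established discreteness of $\GL(L) = \GL_n(\Z)$ inside $\GL_n(\R)$, which was recorded in the example immediately preceding this proposition. First I would fix a $\Z$-basis of $L$, so that $L$ is identified with $\Z^n \subset \V_\Q \cong \Q^n$ and correspondingly $\V_\R \cong \R^n$; this basis also gives the identification $\GL(\V_\R) \cong \GL_n(\R)$. Under this identification, the subgroup $\GL(L) \subset \GL(\V_\R)$ becomes exactly $\GL_n(\Z)$, and we have already observed that $\GL_n(\Z)$ is discrete in $\GL_n(\R)$.

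The key observation is then the inclusion $\G_L \subset \GL(L)$. Indeed, by definition $\G_L$ consists of elements $g \in \G(\Q)$ that preserve the lattice $L$; each such $g$ acts as a $\Z$-module automorphism of $L$, hence lies in $\GL(L)$. Therefore $\G_L \subset \GL(L) \cap \G(\R)$. Since $\GL(L)$ is discrete in $\GL(\V_\R)$ and $\G(\R)$ carries the subspace topology induced from $\GL(\V_\R)$, the intersection $\GL(L) \cap \G(\R)$ is discrete in $\G(\R)$, and a fortiori so is its subset $\G_L$. (Concretely: a subset of a discrete subset is discrete, and discreteness is inherited by subspaces.) This establishes that $\G_L$ is discrete in $\G(\R)$.

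There is essentially no hard part here — the proposition is a formal consequence of the preceding example together with the fact that elements of $\G_L$, being automorphisms of the free $\Z$-module $L$, automatically have integral matrices in the chosen basis. The only point requiring minimal care is to be clear that the topology on $\G(\R)$ in the statement is the one induced from $\End(\V_\R) \cong \R^{n \times n}$ (equivalently from $\GL_n(\R)$), as fixed earlier when we showed $\G(\R)$ is a locally compact group; with that understood, discreteness of the ambient subgroup $\GL_n(\Z)$ passes to $\G_L$ immediately.
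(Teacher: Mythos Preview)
Your proof is correct and follows exactly the paper's approach: the paper's proof is the single line ``Since $\G_L \subset \GL(L)$, this follows from the discreteness of $\GL(L)$ in $\GL(\V_\R)$,'' and you have simply spelled out the topological details behind that inclusion.
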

\begin{proof}
       Since $\G_L \subset \GL(L)$, this follows from the discreteness of
       $\GL(L)$ in $\GL(\V_\R)$. 
\end{proof}

\begin{definition}[commensurable groups]
        \label{def:commens}
       Let $\Gamma_1$ and $\Gamma_2$ be two subgroups of a larger group $G$. 
       We say that $\Gamma_1$ is \emph{commensurable with} $\Gamma_2$  if 
       $\Delta = \Gamma_1 \cap \Gamma_2$ has finite index in both $\Gamma_1$ and
       $\Gamma_2$.
\end{definition}
\begin{rmk}
        \label{rmk:commens-equivl-rel} 
        ``To be commensurable with'' is an equivalence relation for the subgroups in
        $G$.
\end{rmk}

\begin{example}
        From Lemma~\ref{lemma:Z-lattices-commens} one easily deduces that any
        two $\Z$-lattices in the same $\Q$-vector space $\V_\Q$ must be
        commensurable (note that $m L \subset L$ has finite index: $[L : mL] =
        m^n$).
\end{example}

\begin{prop}
        \label{prop:arithm-commensur}
       Let $L$ and $L'$ be two $\Z$-lattices in the same vector space $\V_\Q$,
       and let $\G \subset \GL(\V)$ be an algebraic $\Q$-group.
       Then $\G_{L}$ and $\G_{L'}$ are commensurable in $\G(\Q)$. 
\end{prop}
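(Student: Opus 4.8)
The plan is to reduce the commensurability of $\G_L$ and $\G_{L'}$ to the special case where one lattice is contained in the other, and then to exploit the fact that a subgroup of $\G_L$ fixing a finite-index sublattice $L' \subset L$ automatically has finite index in $\G_L$. The key external input is Lemma~\ref{lemma:Z-lattices-commens}, which gives $m \in \Z$ with $mL \subset L' \subset \frac{1}{m}L$; since $\GL(\V_\Q)$ acts $\Q$-linearly, the stabilizer of a lattice coincides with the stabilizer of any scalar multiple of it, so $\G_{mL} = \G_L$ and $\G_{\frac{1}{m}L} = \G_L$. Thus it suffices to treat the case $L' \subset L$ with $[L : L'] < \infty$, and symmetry (plus the transitivity of commensurability, Remark~\ref{rmk:commens-equivl-rel}) then handles the general case.

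\textbf{Key steps.} First I would record the scalar-invariance $\G_{cL} = \G_L$ for $c \in \Q^\times$ and reduce, via Lemma~\ref{lemma:Z-lattices-commens}, to showing: if $L' \subset L$ has finite index, then $\G_{L'} \cap \G_L = \G_{L'}$ has finite index in $\G_L$ (and symmetrically in $\G_{L'}$, after swapping roles). For the finite-index claim, consider the finite abelian group $Q = L/L'$ and the action of $\G_L$ on $L$ by automorphisms (every $g \in \G_L$ satisfies $gL = L$, so it permutes $L$). Each $g \in \G_L$ need not preserve $L'$, but I would instead let $\G_L$ act on the finite set of all subgroups of $L$ of index equal to $[L:L']$; this set is finite because $L$ is finitely generated. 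The stabilizer of the particular subgroup $L'$ under this action is exactly $\{g \in \G_L : gL' = L'\} = \G_{L'} \cap \G_L = \G_{L'}$, and a stabilizer of a point in a finite set acted on by $\G_L$ has finite index in $\G_L$. This gives $[\G_L : \G_{L'}] < \infty$.

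\textbf{Completing the argument.} By the symmetric argument applied to the inclusion $mL \subset L'$ (a finite-index sublattice of $L'$), or more simply by running the same reasoning with the roles of $L$ and $L'$ interchanged using $mL' \subset L$, one obtains that $\G_{mL} = \G_L$ has finite index in $\G_{L'}$ as well — or one observes directly that $\G_L \cap \G_{L'} \supseteq \G_{mL}$ has finite index in both. Concretely: set $\Delta = \G_L \cap \G_{L'}$. We have shown $[\G_L : \Delta] < \infty$; applying the boxed finite-index lemma to the pair $mL \subset L$ gives $[\G_L : \G_{mL}] < \infty$, and since $\G_{mL} = \G_L$ this is automatic, so instead apply it to $mL \subset L'$: as $mL$ is a finite-index sublattice of $L'$, we get $[\G_{L'} : \G_{mL}] < \infty$, hence $[\G_{L'} : \Delta] < \infty$ because $\G_{mL} = \G_L \supseteq \Delta \supseteq \G_{mL}$ forces $\Delta = \G_{mL} \cap \G_{L'}$ to sit between $\G_{mL}$ and $\G_{L'}$. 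Thus $\Delta$ has finite index in both $\G_L$ and $\G_{L'}$, which is the definition of commensurability.

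\textbf{Main obstacle.} The only subtle point — and the step I expect to require care — is the assertion that a point-stabilizer under an action of $\G_L$ on a finite set has finite index in $\G_L$: this is the orbit–stabilizer principle, but it must be invoked without assuming $\G_L$ itself is finite, so I would phrase it as ``the index $[\G_L : \mathrm{Stab}(L')]$ equals the size of the orbit of $L'$, which is bounded by the (finite) number of index-$[L:L']$ subgroups of $L$.'' Everything else is bookkeeping around the scalar-invariance of stabilizers and the transitivity of commensurability.
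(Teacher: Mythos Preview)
Your approach is essentially the paper's: use scalar invariance $\G_{cL}=\G_L$, invoke Lemma~\ref{lemma:Z-lattices-commens} to get $mL\subset L'\subset\frac{1}{m}L$, let $\G_L$ act on a finite set of lattices, and apply orbit--stabilizer. The paper acts on the set $X$ of all lattices $M$ with $mL\subset M\subset\frac{1}{m}L$; you act on sublattices of $L$ of a fixed index. Both sets are finite and both work.

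There is, however, a real error: you assert that when $L'\subset L$ one has $\G_{L'}\cap\G_L=\G_{L'}$, i.e.\ $\G_{L'}\subset\G_L$. This is false. With $L=\Z^2$ and $L'=2\Z\oplus\Z$, the matrix $\left(\begin{smallmatrix}1&0\\1/2&1\end{smallmatrix}\right)$ stabilizes $L'$ but sends $(1,0)$ to $(1,\tfrac{1}{2})\notin L$. The stabilizer of $L'$ under the $\G_L$-action is $\G_L\cap\G_{L'}$, nothing more --- and that is precisely the subgroup orbit--stabilizer shows to have finite index in $\G_L$. Your ``Completing the argument'' paragraph then tangles itself up chasing this false equality (the chain $\G_L\supseteq\Delta\supseteq\G_{mL}=\G_L$ would force $\Delta=\G_L$, which is absurd).

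The fix is immediate once you drop the false equality. Your orbit--stabilizer step already gives $[\G_L:\Delta]<\infty$ for $\Delta=\G_L\cap\G_{L'}$. For the other index, run the identical argument with $\G_{L'}$ acting on sublattices of $L'$ of index $[L':mL]$: the stabilizer of $mL$ is $\G_{L'}\cap\G_{mL}=\G_{L'}\cap\G_L=\Delta$, so $[\G_{L'}:\Delta]<\infty$.
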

\begin{proof}
        Note that for any $\alpha \in \Q^\times$ and any $\Z$-lattice $L$, we
        have $\G_{\alpha L} = \G_L$. From Lemma~\ref{lemma:Z-lattices-commens}
        we have the existence of $m \in \Z$ such that $m L \subset L' \subset
        \frac{1}{m} L$, from which we deduces that $\G_L$ acts on the set 
        \begin{align*}
                X = \left\{  M \mbox{ $\Z$-lattice in } \V_\Q \;|\; mL \subset M
                \subset \frac{1}{m} L \right\}.
        \end{align*}
        Under this action, the stabilizer of $L' \in X$ is exactly $\G_L \cap \G_{L'}$.
        Thus $\G_L/(\G_{L'} \cap \G_L)$ is in bijection with the orbit of $L'$ in
        $X$. But $X$ is a finite set (one shows that the number a sublattices of
        fixed index in a fixed lattice is finite). Similarly, one proves that
        the index $[\G_{L'}: \G_L \cap \G_{L'}]$ is finite, so that $\G_L$ and
        $\G_{L'}$ are commensurable. 
\end{proof}

This motivates that following definition.

\begin{definition}[Arithmetic subgroup]
        \label{def:arithm-sbgp}
        Let $\G \subset \GL(\V)$ be an algebraic $\Q$-group.
        Any subgroup $\Gamma \subset \G(\Q)$ commensurable with $\G_L$ for some
        $\Z$-lattice $L \subset \V_\Q$ is called an \emph{arithmetic subgroup} of
        $\G(\Q)$.
\end{definition}

The important fact to bear in mind is that as single algebraic $\Q$-group really
determines a whole commensurability class of subgroups in $\G(\Q)$.

\begin{rmk}
       The definition could be extended in 
       an obvious way to subgroups of $\G(\R)$ or $\G(\C)$. 
       But then ``arithmetic subgroups of $\G(\R)$'' could refer to more than
       one commensurability class. 
       To avoid some confusion, we choose here to limit the use of 
       the term  ``arithmetic subgroup'' for subgroups of $\G(\Q)$, and will 
       extend the notion when speaking of ``arithmetic lattices'' (next
       section).
\end{rmk}



\begin{example}
        $\GL_n(\Z)$ is an arithmetic subgroup of $\GL_n(\Q)$. Similarly for
        $\SL_n(\Z) \subset \SL_n(\Q)$.
\end{example}

\begin{example}
       For an integer $m \ge 2$, let $\phi_m : \SL_n(\Z) \to \SL_n(\Z/m)$ be the
       homomorphism given on each matrix entry by the reduction mod $m$ map $\Z
       \to \Z/m$. Its kernel $\Gamma(m) = \ker(\phi_m)$ is called a
       \emph{principal congruence subgroup} of $\SL_n(\Z)$, and since
       $\SL_n(\Z/m)$ is finite we have that $\Gamma(m)$ has finite index in
       $\SL_n(\Z)$. Thus $\Gamma(m)$ is an arithmetic subgroup of $\SL_n(\Q)$. 
\end{example}

When choosing a $\Z$-basis for $L$ we obtain an identification $L = \Z^n$, which
in turn identifies $\G_L$ with a subgroup of $\GL_n(\Z)$. We have: $\G_L
= \G(\Q) \cap \GL_n(\Z)$, and  this
justifies the notation $\G_\Z$ for $\G_L$. If $L$ is clear from the context, or
if the statement is independent of the choice of $L$, we will use this notation
$\G_\Z$.

\begin{rmk}
        \label{rmk:Z-points-caution}
  The notation $\G(\Z)$ is also commonly used.
  Both notation $\G(\Z)$ and $\G_\Z$  are a bit dangerous, since the ``$\Z$-points'' are
  not intrinsically defined in $\G$. If $\phi: \G \to \G'$ is a $\Q$-isomorphism,
  then we can only say that  $\phi(\G_\Z)$ is commensurable with $\G'_\Z$. On
  the other hand, $\phi$ induces an isomorphism $\G(\Q) \cong \G'(\Q)$.
\end{rmk}

\section{Arithmetic lattices}


\begin{prop}
        \label{prop:commens-preserves-lattice} 
        Let $\Gamma$ and $\Gamma'$ be  two subgroups of the locally compact
        group $G$, and suppose that $\Gamma$ and $\Gamma'$ are commensurable.
        Then $\Gamma$ is a lattice in $G$ if and only $\Gamma'$ is
        a lattice in $G$. Moreover, in this case $\Gamma'$ is uniform if and only if
        $\Gamma'$ is uniform.
\end{prop}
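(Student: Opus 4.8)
The plan is to reduce everything to the case where one group contains the other with finite index, and then to handle that case directly. First I would observe that it suffices to prove the following two claims: (a) if $\Gamma' \subset \Gamma$ with $[\Gamma : \Gamma'] < \infty$, then $\Gamma$ is a lattice iff $\Gamma'$ is, and moreover uniformity is preserved; and (b) the same with the roles reversed, i.e.\ $\Gamma \subset \Gamma'$ with finite index. Once these are established, the general case follows by passing through $\Delta = \Gamma \cap \Gamma'$: since $\Delta$ has finite index in both $\Gamma$ and $\Gamma'$, applying (a) to the pair $\Delta \subset \Gamma$ and then (b) to the pair $\Delta \subset \Gamma'$ chains the equivalences together. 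Since a lattice must be discrete, I should also note at the outset that a subgroup commensurable with a discrete subgroup of $G$ is itself discrete (a finite-index subgroup of a discrete group is discrete, and a group containing a discrete group of finite index is discrete because it is a finite union of translates of a discrete set, each of which is discrete and closed by Lemma~\ref{lemma:discrete-implies-closed}).

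For claim (a), suppose $\Gamma' \subset \Gamma$ has finite index, say with coset representatives $\gamma_1, \dots, \gamma_k$, so $\Gamma = \bigsqcup_i \gamma_i \Gamma'$. If $\F$ is a measurable set with $\F\Gamma = G$ and $\mu(\F) < \infty$, then $\F' = \bigcup_i \F\gamma_i$ is measurable, satisfies $\F'\Gamma' = \F\Gamma = G$, and has $\mu(\F') \le \sum_i \mu(\F\gamma_i)$; here I would invoke right-invariance of the Haar measure on $G$ (or, if $G$ is not unimodular, note that $\mu(\F\gamma_i) = \Delta_G(\gamma_i)\mu(\F)$ with each modular-function value finite and the sum still finite) to conclude $\mu(\F') < \infty$. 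Hence $\Gamma$ a lattice $\Rightarrow$ $\Gamma'$ a lattice. Conversely, if $\F'$ is a finite-measure set with $\F'\Gamma' = G$, then already $\F'\Gamma = G$, so $\Gamma$ is a lattice. For uniformity: if $G/\Gamma'$ is compact, the natural continuous surjection $G/\Gamma' \to G/\Gamma$ shows $G/\Gamma$ is compact; conversely, if $G/\Gamma$ is compact, I would take a compact $C \subset G$ with $\pi_\Gamma(C) = G/\Gamma$, i.e.\ $C\Gamma = G$, so that $C\gamma_1 \cup \dots \cup C\gamma_k$ is a compact set whose image under $\pi_{\Gamma'}$ is all of $G/\Gamma'$, giving compactness of $G/\Gamma'$.

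Claim (b) is the symmetric situation: $\Gamma$ has finite index in $\Gamma'$, i.e.\ $\Gamma' = \bigsqcup_i \Gamma\delta_i$ for some $\delta_1, \dots, \delta_k$. If $\Gamma'$ is a lattice with finite-measure tile $\F'$ satisfying $\F'\Gamma' = G$, then $\F = \F' \cup \F'\delta_1^{-1} \cup \dots \cup \F'\delta_k^{-1}$ has finite measure (again by right-invariance / finiteness of the modular function on the $\delta_i$) and $\F\Gamma \supseteq \F'\Gamma\delta_i^{-1}\delta_i \cdot(\cdots)$ — more carefully, $\F\Gamma = \bigcup_i \F'\delta_i^{-1}\Gamma$; using $\Gamma' = \bigsqcup_i \Gamma\delta_i$ one checks $\bigcup_i \delta_i^{-1}\Gamma = \bigcup_i \Gamma^{-1?}\ldots$ — so the cleanest route is to instead verify directly that $\F = \F'$ already works in one direction and a finite union works in the other, mirroring (a) with left/right bookkeeping done carefully. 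The uniformity argument is the mirror of the one above. I would then assemble (a) and (b) via $\Delta = \Gamma \cap \Gamma'$ as described.

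The main obstacle I anticipate is purely bookkeeping: keeping straight which translates ($\F\gamma_i$ versus $\gamma_i\F$) appear where, given that $\Gamma$ acts on the \emph{right} in the definition $\F\Gamma = G$, and making sure the finite-measure conclusion is legitimate — this is where I must be careful that $G$ need not be unimodular, so $\mu(\F\gamma)$ can differ from $\mu(\F)$, though it is always a finite positive multiple, which is all that is needed. No deep input is required: only the definition of lattice (Definition~\ref{def:lattice}), openness of $\pi$ (Lemma~\ref{lemma:proj-open}), closedness of discrete subgroups (Lemma~\ref{lemma:discrete-implies-closed}), finiteness of Haar measure on compacts (Theorem~\ref{thm:Haar}), and elementary coset combinatorics.
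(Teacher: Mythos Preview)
Your approach is essentially the same as the paper's: reduce to the case $\Gamma' \subset \Gamma$ of finite index, handle discreteness via finite unions of translates, and produce a finite-measure set for $\Gamma'$ as $\F' = \bigcup_i \F\gamma_i$ from a finite-measure set $\F$ for $\Gamma$. The paper is terser --- it does not separate out your claim (b), since by symmetry of the statement only the inclusion $\Gamma' \subset \Gamma$ need be treated, and it leaves uniformity as an exercise --- whereas you are more careful on two points the paper glosses over: the possible failure of unimodularity (your observation that $\mu(\F\gamma_i)$ is a finite multiple of $\mu(\F)$ is exactly what is needed), and the explicit argument for uniformity via a compact $C$ with $C\Gamma = G$.
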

\begin{proof}
   We only have to prove the result assuming $\Gamma' \subset
   \Gamma$ with finite index. If $\Gamma$ is discrete, then
   certainly $\Gamma'$ is. We write $\Gamma = g_1 \Gamma' \cup \dots
   \cup g_n \Gamma'$ (finite union), with $g_i \in \Gamma$. If
   $\Gamma'$ is discrete, then each $g_i \Gamma'$ is discrete. It
   easily follows that $\Gamma$ is discrete. 
   Suppose that $\Gamma$ is a lattice: there exists $\F$ of finite
   measure such that $\F \, \Gamma = G$. Then for $\F' = \F g_1 \cup \dots
   \cup \F g_n$ one has $\F'\, \Gamma' = G$, so that $\Gamma'$ is also a
   lattice. The converse is obvious: if $\Gamma'$ is a lattice then
   $\Gamma$ is. We leave it as an exercise the proof concerning the
   ``uniform'' property.
\end{proof}

Thus taking commensurable subgroups preserves the quality of being a
lattice (resp.\ being a uniform lattice). It follows that if one
arithmetic subgroup of $\G(\Q)$ is a lattice in $\G(\R)$, then the same
holds for  all arithmetic subgroups of $\G(\Q)$. This shows two
important things:
\begin{itemize}
  \item The sentence ``$\G_\Z$ is a lattice (resp.\ uniform lattice)''
    in $\G(\R)$ makes sense without specifying a particular $\Z$-lattice $L$
    such that $\G_L = \G_\Z$.
  \item We can try to find condition on the $\Q$-group $\G$ that implies
    that $\G_\Z$ is a (uniform) lattice, rather than conditions on
    $\G_\Z$ itself.
\end{itemize}

It is easily shown that any arithmetic subgroup in $\Gm(\Q) \cong \Q^*$ is not
a lattice in $\Gm(\R) \cong \R^*$. This can be use to show that more generally
if the $\Q$-group is such that there exists a nontrivial
$\Q$-morphism of group $\G \to \Gm$, then $\G_\Z \subset \G(\R)$ is 
not a lattice.

\begin{definition}[Character]
        Let $\G$ be an algebraic $k$-group.
        A \emph{$k$-character} (or \emph{character over $k$}) of $\G$
        is a $k$-morphism of group $\chi: \G \to \Gm/k$. 
\end{definition}

There is always the trivial character, given by $\chi(g) = 1$ for all $g \in \G$. 

\begin{example}
       The determinant $\det: \GL(\V) \to \Gm$ is a nontrivial character. Thus,
       $\GL_n(\Z)$ is not a lattice in $\GL_n(\R)$.
\end{example}

The following fundamental theorem asserts that $\G$ having a nontrivial $\Q$-character 
is the only obstruction for $\G_\Z$ to be a lattice in $\G(\R)$. Its proof
relies on the so-called \emph{reduction theory}, and can be found in
\cite{Bor69}.

\begin{theorem}[Borel and Harish-Chandra]
       Let $\G$ be an algebraic $\Q$-group. Then $\G_\Z$ is a
       lattice in $\G(\R)$ if and only if $\G$ has no nontrivial
       $\Q$-character.
        \label{thm:Borel-HC}
\end{theorem}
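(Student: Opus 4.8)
The plan is to prove both directions of the Borel--Harish-Chandra theorem, treating the ``only if'' direction as a warm-up and the ``if'' direction as the substantive content. For the easy direction, suppose $\chi : \G \to \Gm$ is a nontrivial $\Q$-character. Then $\chi$ maps $\G_\Z$ into $\Gm_\Z = \{\pm 1\}$, so $\chi(\G_\Z)$ is finite; but $\chi(\G(\R))$ is a noncompact subgroup of $\R^\times$ (its image is Zariski-dense in $\Gm$ by Lemma~\ref{lemma:Zariski-dense} applied to the surjection, hence unbounded). If $\G_\Z$ were a lattice in $\G(\R)$, one could push the finite-covolume measurable set $\F$ forward: more carefully, $\G(\R)/\G_\Z$ would surject continuously onto $\chi(\G(\R))/\chi(\G_\Z)$, which is a noncompact group carrying a $\G(\R)$-invariant infinite measure, and a finite-covolume quotient cannot surject onto such a thing. (One makes this precise by observing that the preimage of a set of infinite measure under the projection $\G(\R) \to \chi(\G(\R))/\chi(\G_\Z)$ meets every $\F\gamma$-translate, forcing $\mu(\F) = \infty$.) This contradicts Definition~\ref{def:lattice}.

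For the hard direction, assume $\G$ has no nontrivial $\Q$-character; we must produce a measurable $\F \subset \G(\R)$ with $\F\,\G_\Z = \G(\R)$ and $\mu(\F) < \infty$. The strategy is \emph{reduction theory}: construct an explicit approximate fundamental domain, a so-called \emph{Siegel set}. First I would reduce to the case $\G \subset \SL_N$ (or $\G = \SL_N$ itself as the model case $\SL_n(\Z) \subset \SL_n(\R)$), using that $\G(\R)$ embeds as a closed subgroup of $\SL_N(\R)$ compatibly with $\Z$-structures, so a finite-covolume Siegel set for $\SL_N$ intersected appropriately descends. The absence of $\Q$-characters is exactly what guarantees that when we apply the Iwasawa-type decomposition $\G(\R) = K A N$ relative to a maximal $\Q$-split torus, the torus part $A$ over which we must integrate is the \emph{full} split torus of $\G$, with no ``central direction'' escaping to infinity with infinite volume --- this is the role played by $\GL_1$ in the counterexample. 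Then one defines a Siegel set $\Sigma_{t,u} = K \cdot A_t \cdot N_u$ where $A_t$ is a cone $\{a : \alpha(a) \le t \text{ for simple roots } \alpha\}$ and $N_u$ a compact piece of the unipotent radical, and proves two things: (i) $\Sigma_{t,u}\,\G_\Z = \G(\R)$ for suitable $t, u$ (this is the geometric heart, proved by a successive-minima / Mahler-compactness argument --- here Theorem~\ref{thm:Mahler} enters --- showing any lattice in the $\G$-orbit can be moved into the Siegel region), and (ii) $\mu(\Sigma_{t,u}) < \infty$, which is a direct Haar-measure computation: in $KAN$ coordinates $d\mu = (\text{positive powers of roots on } a)\, dk\, da\, dn$, and over the cone $A_t$ the integral $\int_{A_t} \prod_\alpha \alpha(a)^{m_\alpha}\, da$ converges precisely because all exponents $m_\alpha$ are strictly positive --- again an incarnation of the no-character hypothesis.

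The main obstacle is step (i), the covering property $\Sigma\,\G_\Z = \G(\R)$: showing that the Siegel set is large enough to hit every coset. For $\SL_n$ this is classical reduction theory of quadratic forms (Hermite--Minkowski, reduced bases), and indeed Lemma~\ref{lemma:bounded-basis} and Mahler's criterion are exactly the tools that let one say ``every lattice of bounded covolume and bounded-below systole has a short basis, hence lies in a bounded region of $X = \GL_n(\R)/\GL_n(\Z)$.'' For general $\G$ one must bootstrap from the $\SL_N$ case and control how $\G(\R) \cap (\text{Siegel set for }\SL_N)$ behaves, which requires the finiteness of certain class-number-type quantities and a careful analysis of how the $\Q$-parabolic subgroups of $\G$ sit inside those of $\SL_N$; this is genuinely the content of Borel's book \cite{Bor69} and I would cite it rather than reprove it. The finiteness of covolume, step (ii), by contrast is a short convergence check once the coordinates are set up. In the write-up I would therefore state the Siegel-set reduction as the organizing principle, carry out the measure computation in the $\SL_n$ model case to show where positivity of root-exponents $\Leftrightarrow$ triviality of $\Q$-characters comes from, and refer to \cite{Bor69} for the full covering statement in the general algebraic group.
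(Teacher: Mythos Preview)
The paper does not actually prove this theorem: immediately after the statement it says only that ``its proof relies on the so-called \emph{reduction theory}, and can be found in \cite{Bor69},'' and later (Remark after Theorem~\ref{thm:compactness-1}) explicitly notes that Theorem~\ref{thm:Borel-HC} ``will not [be] proved during this course.'' So there is no paper-proof to compare against; your plan --- sketch the easy direction, outline Siegel sets for the hard direction, and ultimately cite \cite{Bor69} for the covering property --- is exactly the level of treatment the paper itself adopts, only with more detail.

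Two small corrections to your sketch. First, invoking Lemma~\ref{lemma:Zariski-dense} for the claim that $\chi(\G(\R))$ is unbounded is not appropriate: that lemma concerns density of $K$-points in $L$-points for algebraically closed $K \subset L$, and says nothing about $\R$-points. What you actually need is that the image of a nontrivial $\Q$-character on $\G(\R)$ is not contained in $\{\pm 1\}$; the paper handles this informally in the paragraph preceding the theorem (``It is easily shown that any arithmetic subgroup in $\Gm(\Q)$ is not a lattice in $\Gm(\R)$\ldots''), and a clean argument uses that $\chi$ is surjective onto $\Gm$ over $\overline{\Q}$ together with the fact that $|\chi|$ is a continuous homomorphism to $\R_{>0}$. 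Second, your heuristic ``positivity of root-exponents $\Leftrightarrow$ triviality of $\Q$-characters'' is not quite an equivalence: the convergence of the Siegel integral uses that the modular function on the minimal $\Q$-parabolic involves only positive-root directions, while the no-character hypothesis ensures there is no central $\Gm$-factor contributing an extra divergent $\int da/a$. These are related but not identical conditions; in Borel's treatment the no-character hypothesis enters more directly in controlling the torus component.
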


In addition to Proposition~\ref{prop:commens-preserves-lattice},
some other elementary modifications preserve lattices. The
following is particularly relevant for us.

\begin{prop}
        \label{prop:lattice-compact-kern}
       Let $\phi: G \to H$ be a continuous open surjective homomorphism between
       locally compact groups, and suppose that $\phi$ has compact
       kernel. Let $\Gamma \subset G$ be a lattice (resp.\ uniform lattice).
       Then $\phi(\Gamma)$ is a lattice (resp.\ uniform lattice) in $H$.
\end{prop}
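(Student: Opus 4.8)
The plan is to show that $\phi(\Gamma)$ is discrete in $H$ and then produce a finite-measure set $\F_H \subset H$ with $\F_H\,\phi(\Gamma) = H$. For discreteness, I would use Lemma~\ref{lemma:discont-action}: since $\Gamma$ is discrete in $G$, there is an open neighbourhood $U \subset G$ of $1$ with $U\gamma \cap U = \emptyset$ for all $\gamma \in \Gamma\setminus\{1\}$. Let $N = \ker(\phi)$, which is compact. By shrinking and using the continuity of multiplication, I would find an open neighbourhood $V$ of $1$ in $G$ with $VNV^{-1}N \subset U$ (or more simply, an open $V \ni 1$ with $V N V \subset U$, using compactness of $N$ to pass from a pointwise to a uniform statement). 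Then $\phi(V)$ is an open neighbourhood of $1$ in $H$ (as $\phi$ is open), and I claim $\phi(V) \cap \phi(V)\phi(\gamma) = \emptyset$ whenever $\phi(\gamma) \neq 1$, i.e. whenever $\gamma \notin N$: an intersection would give $v_1 = v_2 \gamma n$ for some $v_i \in V$, $n \in N$, whence $v_2^{-1} v_1 n^{-1} = \gamma \in \Gamma$ lies in $V N V \subset U$, and since $\gamma \notin N$ we have $\gamma \ne 1$, contradicting the choice of $U$ once we arrange $U \cap \Gamma \cap (VNV) = \{1\}$. This shows $\phi(\Gamma)$ is discrete.

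For the finite-covolume part, let $\F \subset G$ be measurable with $\F\,\Gamma = G$ and $\mu_G(\F) < \infty$. The natural candidate is $\F_H = \phi(\F)$. Surjectivity $\F_H\,\phi(\Gamma) = H$ is immediate: given $h \in H$, pick $g \in G$ with $\phi(g) = h$ (using surjectivity of $\phi$), write $g = x\gamma$ with $x \in \F$, $\gamma \in \Gamma$, and apply $\phi$. The real content is that $\phi(\F)$ has finite measure for a suitable Haar measure $\mu_H$ on $H$. Here I would invoke the Weil integration formula (quotient integration formula) associated to the short exact sequence $1 \to N \to G \to H \to 1$: since $N$ is compact, $G$ and $H$ (and $N$) are unimodular on the relevant pieces, and one has $\int_G f\, d\mu_G = \int_H \bigl(\int_N f(gn)\, d\mu_N(n)\bigr) d\mu_H(\phi(g))$ for suitably normalized Haar measures. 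Applying this with $f = \mathbf{1}_{\F N}$ (note $\F N$ is measurable and still satisfies $(\F N)\Gamma = G$, and $\mu_G(\F N) \le \mu_G(\F)\cdot\mu_N(N) < \infty$ by the same formula or by a Fubini argument on $G$), the inner integral is $\ge \mu_N(N) > 0$ on the set $\phi(\F) = \phi(\F N)$, giving $\mu_H(\phi(\F)) \le \mu_N(N)^{-1}\mu_G(\F N) < \infty$. Thus $\phi(\Gamma)$ is a lattice.

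For the uniform case, if $G/\Gamma$ is compact I would argue directly without the measure estimate: $\phi$ induces a continuous surjection $G/\Gamma \to H/\phi(\Gamma)$ (well-defined since $\phi(\Gamma) \subset H$ is the image of $\Gamma$), so $H/\phi(\Gamma)$ is the continuous image of a compact space and hence compact; then $\phi(\Gamma)$ is a uniform lattice by Proposition~\ref{prop:cocompact-implies-lattice}. Conversely one does not need the converse direction here.

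The main obstacle I anticipate is the measure-theoretic step: justifying the Weil/quotient integration formula — or at least the one inequality $\mu_H(\phi(\F N)) \le \mu_N(N)^{-1}\mu_G(\F N)$ — at the level of rigor of these notes, where Haar measure has only been introduced via its existence and uniqueness (Theorem~\ref{thm:Haar}) and no quotient-measure machinery has been developed. A cleaner route that sidesteps this: reduce to the compact-kernel case by first handling $G \to G/N$ via a local section. Since $\phi$ is open with compact kernel, each point of $H$ has a neighbourhood over which $\phi$ admits a continuous section (this uses local compactness and the compactness of $N$), so one can cover $\phi(\F N)$ by countably many such neighbourhood-images, pull back to measurable slices in $G$ that map bijectively and bicontinuously to their images, and transport the finite measure of the corresponding pieces of $\F N$ up to $H$; bounding the overlaps by $\mu_N(N)$ then gives the finiteness. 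Either way, the discreteness argument is routine and the surjectivity of the fundamental domain is trivial — all the difficulty is concentrated in controlling $\mu_H(\phi(\F))$.
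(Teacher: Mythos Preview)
Your discreteness argument has a real gap: the containment $VNV \subset U$ (or $VNV^{-1}N \subset U$) cannot be arranged in general, since taking $v=1$ would force $N \subset U$, and there is no reason for the compact kernel $N$ to sit inside a small neighbourhood of the identity. Your fallback condition ``$U \cap \Gamma \cap (VNV) = \{1\}$'' also fails whenever $N \cap \Gamma \neq \{1\}$, which can certainly happen (e.g.\ $G = \R \times \Z/2$, $\Gamma = \Z \times \Z/2$, $N = \{0\}\times \Z/2$). What you actually need is $V^{-1}VN \cap \Gamma \subset N$, and this \emph{can} be arranged: $N\cap\Gamma$ is finite (compact and discrete), so $\Gamma \setminus N$ is closed and disjoint from the compact set $N$; separate them by an open $W \supset N$ with $W \cap (\Gamma\setminus N) = \emptyset$, then use compactness of $N$ and continuity of multiplication to find $V$ with $V^{-1}VN \subset W$. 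With this correction your neighbourhood argument goes through.

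For comparison, the paper takes a different route and proves strictly less. It restricts to the metric case and to $\Gamma$ uniform, citing \cite{OniVinb-I} for the general statement. Its discreteness proof is sequential: assuming $\phi(g_n)\to 1$ with $\phi(g_n)\neq 1$, one shows $d(g_nK,K)\to 0$, extracts via compactness of $K$ a convergent $g_n \to h \in K$, and then discreteness of $\Gamma$ forces $g_n = h$ eventually, contradicting $\phi(g_n)\neq 1$. For uniformity it does essentially what you do: push forward a compact $\F$ with $\F\Gamma = G$. So your attempt at the non-uniform case via the Weil integration formula goes beyond what the paper proves; your identification of the quotient-measure step as the only genuine difficulty is exactly right, and your inequality $\mu_H(\phi(\F N)) \le \mu_N(N)^{-1}\mu_G(\F N)$ is in fact an equality once one observes $\F N$ is right-$N$-saturated. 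The ``local section'' alternative you sketch is shakier --- continuous sections need not exist even with compact kernel --- so the integration-formula route is the one to pursue.
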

\begin{proof}
        We refer to \cite[Ch.1 Cor.~4.10]{OniVinb-I}
        for the general proof, and we prove here only the restricted case
        of $\Gamma$ uniform, assuming that $G$ and $H$ are metric.
        
        Let  show that $\phi(\Gamma)$ is discrete assuming that $G$ and
        $H$ are metric. We denote by $K \subset G$ the kernel of $\phi$. 
        Suppose that $\phi(\Gamma)$ is not discrete in
        $H$. Then there exists a sequence $g_n \in \Gamma$ such that
        $\phi(g_n) \to 1$, with all $\phi(g_n) \neq 1$. This implies
        that the distance $d(g_n K, K) > 0$ converges to $0$
        (if not: there would exist an open
        neighbourhood $U \subset G$ of $K$ such that $U \cap g_n K =
        \emptyset$  for infinitely many $g_n$ in the sequence,
        so that $\phi(U)$ would be an open neighbourhood of
        $1$ in $H$ that does not contain all but finitely of the
        $\phi(g_n)$; this contradicts the
        convergence of this sequence). Thus there exists a sequence
         $\alpha_n, \beta_n \in K$ such that
        $g_n \alpha_n \beta_n^{-1} \to 1$. Moreover, since $K$ is
        compact, by considering subsequences we may assume that
        $\alpha_n \beta_n^{-1}$ converges, say to $h^{-1} \in K$ so that
        $g_n \to h$. But since $\Gamma$ is discrete we have that
        $g_n = h$ for $n$ large enough, which implies that
        $\phi(g_n) = 1$ for $n$ large enough. This contradicts the
        assumption $\phi(g_n) \neq 1$.

        Using a similar argument as in
        the proof of Proposition~\ref{prop:cocompact-implies-lattice}, one sees
        that $G/\Gamma$ is compact if and only if there exists $\F \subset G$
        compact such that $\F \Gamma = G$. In this case $\F' = \phi(\F)$ is compact
        and such that $\F' \phi(\Gamma) = \phi(G) = H$, proving that $\phi(\Gamma)$ is
        uniform. 
\end{proof}

\begin{definition}[Arithmetic lattice]
        \label{def:arithm-lattice}
       Let $H$ be a locally compact group. A subgroup $\Gamma \subset H$ is
       called an \emph{arithmetic lattice} if there is an algebraic $\Q$-group $\G$ such that:
       \begin{itemize}
               \item $\G$ has no nontrivial $\Q$-character;
               \item there exists a surjective continuous open homomorphism $\phi: \G(\R) \to
                       H$ with compact kernel;
               \item $\Gamma$ is commensurable with $\phi(\G_\Z)$.
             \end{itemize}
\end{definition}

\begin{rmk}
        It follows from the various results stated above that $\Gamma \subset H$
        is indeed a lattice.
\end{rmk}

\begin{rmk}
As for arithmetic subgroups, a choice of a suitable $(\G, \phi)$ determines a
single commensurability class of lattices in $H$. But we can now let $\G$ and/or
$\phi$ vary, to get different commensurability classes of lattices in the same
$H$. In particular, if $\H$ is an algebraic $\Q$-group,
then any $\Q$-group $\G$ without nontrivial $\Q$-character such that
$\G(\R) \cong \H(\R)$ determines a commensurability classes of arithmetic
lattices, a priori not commensurable with $\H_\Z$. 
\end{rmk}

\begin{rmk}
        The existence of $(\G, \phi)$ imposes strong conditions on the locally
        compact group $H$. It is typically a real Lie group.
\end{rmk}

\section*{About references}

The three standard references on linear algebraic groups -- all with the same
title -- are the books \cite{Humph75} by Humphreys, \cite{Bor91} by Borel,
and \cite{Spring98} by Springer. All of them start by introducing some of the needed
theory from algebraic geometry. Humphrey's book is somewhat easier for a
beginner; the drawback is that it does not develop the ``rational'' theory -- over
a non-algebraically closed field (but it surveys this part in a last
chapter).

In most textbooks on algebraic geometry, algebraic sets are defined in the
canonical vector space $K^n$, rather than in some $n$-dimensional $\V = \V_K$ as we did. 
The former approach is easier, since it directly identifies $k[\V]$ with the
polynomial ring $k[T_1,\dots, T_n]$. Here we have chosen to work with $\V$
essentially to emphasize the non-intrinsic nature of the ``$\Z$-points'' in
$\V_\Q$. We believe that  this facilitates the discussion of arithmetic subgroups. 

Arithmetic groups and lattices are discussed for instance in
\cite{Bor69} and \cite{OniVinb-I}.

\chapter{More on algebraic groups}
\label{ch:more-alg-gr}

In order to prove some results about arithmetic groups, one needs to
better understand the structure theory of algebraic groups, in which
they live. It is the goal of this chapter to survey some useful facts
concerning algebraic groups. 

We still assume that the field $k$ has characteristic zero, and $K =
\overline{k}$.

\section{Rational representations}

As it was defined in Chapter 3, any morphism of group determines a linear 
action on a vector space of finite dimension. We emphasize this action in 
the following definition.  

\begin{definition}[Rational representation]
 Let $\G$ be a $k$-group and $\V_k$ be a finite-dimensional vector space
 over $k$. A {\em $k$-rational representation} (or simply {\em
 $k$-representation}) of $\G$ on $\V$ is a $k$-morphism of group $\G \to \GL(\V)$. 
\end{definition}

\begin{rmk}
For any field extension $\ell/k$ a $k$-representation furnishes an
action of $\G(\ell)$ on $\V_\ell$.
\end{rmk}

It is also useful to consider linear actions of $\G$ on some infinite
dimensional vector spaces. This is for instance the case of $K[\G]$, seen as a
$K$-vector space. For every $x \in \G$ consider the right multiplication
map $\phi_x: g \mapsto g x$.  It is a morphism of variety from $\G$ onto itself,
and thus for any $f \in K[\G]$ the \emph{right translation} $\rho_x f = f \circ
\phi_x$ is again an element of $K[\G]$. Then, $x \mapsto \rho_x$ 
defines a representation of $\G$ on
the infinite dimensional vector space $K[\G]$. By definition, $(\rho_x f)(y) = f(yx)$.
For $x \in \G(k)$ we also obtain a restricted map $\rho_x: k[\G] \to k[\G]$. 

\begin{lemma}
  Let $\H \subset \G$ be a closed subgroup, and $I = I(\H) \subset
  K[\G]$ be the ideal of regular functions vanishing on $\H$.
  Then $\H = \left\{ x \in \G \;|\; \rho_x(I) \subset I \right\}$.
  \label{lemma:stab-rho-subgroup}
\end{lemma}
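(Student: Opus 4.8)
The plan is to prove the two inclusions of the set equality separately. The inclusion $\H \subseteq \{x \in \G \mid \rho_x(I) \subset I\}$ is the easy direction: if $x \in \H$, then for any $f \in I$ and any $y \in \H$ we have $yx \in \H$ (since $\H$ is a subgroup), so $(\rho_x f)(y) = f(yx) = 0$; hence $\rho_x f$ vanishes on all of $\H$, i.e. $\rho_x f \in I$. This shows $\rho_x(I) \subset I$, so $x$ belongs to the right-hand side.

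For the reverse inclusion, suppose $x \in \G$ satisfies $\rho_x(I) \subset I$. I want to conclude $x \in \H$. The key point is to evaluate at the identity $e \in \G$: since $\H$ is a subgroup, $e \in \H$, so for every $f \in I$ we have $f(e) = 0$. Now for such an $f$, the hypothesis gives $\rho_x f \in I$, hence in particular $(\rho_x f)(e) = 0$; but $(\rho_x f)(e) = f(ex) = f(x)$. So $f(x) = 0$ for every $f \in I = I(\H)$. This means $x$ lies in the common zero set of all regular functions vanishing on $\H$, i.e. $x \in \V(I(\H))$. Since $\H$ is closed, Corollary~\ref{cor:closed-from-ideals} gives $\V(I(\H)) = \H$, so $x \in \H$, as desired.

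The main (and only mild) obstacle is making sure the evaluation-at-the-identity trick is legitimate: one must use that $e \in \H$, which is immediate since $\H$ is a subgroup, and that elements of $K[\G]$ really can be evaluated at points of $\G$ (they are genuine functions on $\G$, which is how $K[\G]$ was set up). One should also double-check that $\rho_x$ is well-defined on $K[\G]$ for arbitrary $x \in \G$ — this was established in the paragraph preceding the lemma, where $\rho_x f = f \circ \phi_x$ with $\phi_x$ the right-multiplication morphism. With these points in hand the argument is a short two-way containment, and no further computation is needed.
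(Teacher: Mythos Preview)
Your proof is correct and follows essentially the same approach as the paper's own proof: both directions are handled identically, with the key ``evaluate at $e$'' trick for the reverse inclusion. The only difference is that you make the appeal to $\V(I(\H)) = \H$ explicit via Corollary~\ref{cor:closed-from-ideals}, whereas the paper leaves this implicit.
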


\begin{proof}
 Let $x \in \H$ and $f \in I$. Then $(\rho_x f)(y) = f(y x) = 0$ for any
 $y \in \H$, so that $\rho_x f \in I$. Conversely, suppose that
 $\rho_x(I) \subset I$. In particular for any $f \in I$, $\rho_x f$
 vanishes on the neutral element $e$: $(\rho_x f)(e) = f(x) = 0$, and
 thus $x \in \H$.
\end{proof}




The infinite dimensional representation $\rho: \G \to \GL(K[\G])$ can be
decomposed into rational subrepresentations: the following result follows for
instance from \cite[Prop.~2.3.6]{Spring98}.

\begin{prop}
        \label{prop:action-Km}
        There exists a filtration $K^0[\G] \subset k^1[\G] \subset \dots \subset
        k[\G]$ such that:
        \begin{itemize}
                \item Each $k^m[\G]$ is a finite dimensional subspace of
                        $k[\G]$.
                \item $\sum_{m \ge 0} k^m[\G] = k[\G]$.
                \item For $K^m[\G] = K \otimes_k k^m[\G]$ and $x \in \G$, we
                        have $\rho_x(K^m[\G]) \subset K^m[\G]$, and $\rho: \G
                        \to \GL(K^m[\G])$ is a $k$-representation.
        \end{itemize}
\end{prop}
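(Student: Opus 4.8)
The goal is to exhibit a filtration of $k[\G]$ by finite-dimensional $\rho$-stable subspaces whose base-changes to $K$ give $k$-representations. The strategy is to start from any finite generating set of the algebra $k[\G]$, apply a single key lemma saying that every element of $k[\G]$ lies in a finite-dimensional $\rho$-stable subspace defined over $k$, and then take the spans of larger and larger finite collections.

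\smallskip

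\emph{Step 1: local finiteness of $\rho$.} First I would prove the following claim: for every $f \in k[\G]$, the linear span of the orbit $\{\rho_x f \mid x \in \G\}$ is contained in a finite-dimensional subspace $W_f \subset k[\G]$ which is $\rho$-stable. The mechanism is the comultiplication. Writing $\G \subset \GL(\V)$ with coordinate functions $T_{ij}$ and $1/\det$, multiplication $\G \times \G \to \G$ is a $k$-morphism, so for $f \in k[\G]$ the function $(y,x) \mapsto f(yx)$ lies in $k[\G \times \G] = k[\G] \otimes_k k[\G]$; hence one can write $f(yx) = \sum_{i=1}^N u_i(y)\, v_i(x)$ with $u_i, v_i \in k[\G]$ for some finite $N$. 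Then $\rho_x f = \sum_i v_i(x)\, u_i$, so every translate of $f$ lies in $\mathrm{span}_k\{u_1,\dots,u_N\}$. Taking $W_f$ to be the span of all translates $\rho_x f$ (which is then finite-dimensional, sitting inside that span of the $u_i$), one checks $W_f$ is $\rho$-stable because $\rho$ is an action: $\rho_{x'}(\rho_x f) = \rho_{xx'} f \in W_f$. Moreover $W_f$ is defined over $k$: it is spanned by finitely many elements of $k[\G]$, and $K \otimes_k W_f$ is the analogous span inside $K[\G]$, on which $\rho$ acts by the same formula, giving a $k$-representation $\G \to \GL(K \otimes_k W_f)$ in the sense of Definition~\ref{def:morphism-group} (the matrix entries of $\rho_x$ in a $k$-basis are the restrictions of the $v_i$, hence regular functions over $k$).

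\smallskip

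\emph{Step 2: building the filtration.} Now I would fix a finite set of algebra generators $g_1, \dots, g_r$ of $k[\G]$ over $k$ — these exist because $k[\G]$ is a quotient of the finitely generated $k$-algebra $k[T_{11},\dots,T_{nn}, 1/\det]$. Let $W = W_{g_1} + \cdots + W_{g_r}$, a finite-dimensional $\rho$-stable $k$-subspace containing all the generators, defined over $k$. Define $k^m[\G]$ to be the $k$-span of all products of at most $m$ elements of $W$ (equivalently, the image of $W^{\otimes \le m}$ under multiplication), with $k^0[\G] = k \cdot 1$. Each $k^m[\G]$ is finite-dimensional. Since $W$ generates $k[\G]$ as an algebra, $\bigcup_m k^m[\G] = k[\G]$. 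Stability under $\rho$ follows because each $\rho_x$ is an algebra homomorphism of $k[\G]$ (it is precomposition with $\phi_x$, which is a morphism of varieties), so it sends a product of elements of $W$ to a product of elements of $\rho_x(W) \subseteq W$; hence $\rho_x(k^m[\G]) \subseteq k^m[\G]$. Finally $K^m[\G] = K \otimes_k k^m[\G]$ is $\rho$-stable by the same argument over $K$, and the matrix entries of $x \mapsto \rho_x$ on $K^m[\G]$, computed in a $k$-basis extracted from products of $W$-basis elements, are polynomial (regular over $k$) because they already are on the generating piece $W$ and the multiplication map is a $k$-morphism. This makes $\rho \colon \G \to \GL(K^m[\G])$ a $k$-representation.

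\smallskip

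\emph{The main obstacle.} The genuinely substantive point is Step 1 — that $\rho$ is locally finite with $k$-rational pieces — and the crux there is invoking $k[\G \times \G] = k[\G] \otimes_k k[\G]$ to expand $f(yx)$ as a finite sum $\sum u_i(y) v_i(x)$; this is where the algebraic-group structure (in particular that multiplication is a morphism defined over $k$) does all the work, and it is also what guarantees the matrix coefficients of the representation are regular functions over $k$ rather than merely over $K$. Once that finite expansion is in hand, everything else (finite generation of $k[\G]$, closure under products, the fact that $\rho_x$ is an algebra map) is routine bookkeeping; the cited reference \cite[Prop.~2.3.6]{Spring98} packages exactly this argument.
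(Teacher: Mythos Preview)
Your argument is correct and is precisely the standard one the paper defers to via the citation \cite[Prop.~2.3.6]{Spring98}: local finiteness of $\rho$ from the comultiplication identity $f(yx)=\sum u_i(y)v_i(x)$ in $k[\G]\otimes_k k[\G]$, followed by the filtration by degree in a finite $\rho$-stable generating space. The only point worth tightening is in Step~1: when you say ``$W_f$ is the span of all translates $\rho_x f$'' and then assert it is spanned by elements of $k[\G]$, you should choose the tensor expansion $\Delta(f)=\sum_{i=1}^N u_i\otimes v_i$ with $N$ minimal, so that the $u_i$ and the $v_i$ are each $k$-linearly independent; then coassociativity forces $\rho_{x'}u_i$ to lie in the $K$-span of the $u_j$, making the $k$-span of the $u_i$ genuinely $\rho$-stable with the required $k$-structure. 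With that tweak your proof matches the reference exactly.
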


\begin{theorem}[Chevalley]
  \label{thm:Chevalley}
  Let $\G$ be a $k$-group 
  and let $\H \subset \G$ be a closed $k$-subgroup.
  There exists a $k$-representation of $\G$ on a vector space $\V$
  such that $\H = \left\{ g \in \G \;|\; g \W = \W \right\}$, where
  $\W_k \subset \V_k$ is some $k$-subspace with $\dim(\W_k) = 1$. 
\end{theorem}

\begin{proof}
  Based on the above discussion we explain the idea of the proof,
  and refer to \cite[\S 11.2]{Humph75} for the details.
  Let $I_k(\H) \subset k[\G]$ be the ideal of functions 
  vanishing on $\H$. Then (since $\H$ is defined over $k$): $K \otimes_k I_k =
  I(\H) \subset K[\G]$. Moreover, it is know that $k[\G]$ (and $K[\G]$) is a
  \emph{Noetherian ring}, meaning that all its ideals are finitely generated.
  This implies that there exists an integer $m$ such that 
  \begin{align*}
          I^m_k &= k^m[\G] \cap I_k(\H)
  \end{align*}
  generates the ideal $I_k(\H)$; in other words,
  the subset $\H$ corresponds exactly to the set
  of points $x \in \G$ such that $f(x) = 0$ for all $f \in I_k^{m}$.
  Let $I^{m} = K\otimes_k I_k^{m} = K^m[\G] \cap I(\H)$.

  By Lemma~\ref{lemma:stab-rho-subgroup} and
  Proposition~\ref{prop:action-Km}, an element $x \in \G$ belongs 
  to $\H$ if and only if $\rho_x(I^{m}) \subset I^{m}$.
  Let $p = \dim(I^{m})$, and define the $k$-vector space
  $\V_k$ to be the exterior algebra $\bigwedge^p k^m[\G]$. 
  Let further define $\W_k$ to be the one-dimensional subspace
  $\W_k = \bigwedge^p I_k^{m}$. It follows that for this $\W \subset \V$ the
  rational representation of $\G$ on $\V$ naturally induced by the
  $k$-representaton $\rho$  has the desired properties.  
\end{proof}

\begin{rmk}
       Chevalley's theorem is also useful for putting
       a natural structure of algebraic variety on $\G/\H$,
       by identifying this quotient as a orbit in the projective space
       $\mathbb{P}(\V)$ of $\V$.
\end{rmk}

\begin{cor}
  \label{cor:Chevalley}
  Suppose moreover that the subgroup  $\H \subset \G$ has no nontrivial $k$-character. 
  Then there exists $v \in \V_k$ such that $\H$ is the stabilizer of $v$:
  \begin{align*}
    \H = \left\{ g \in \G \;|\; g v = v \right\}.
  \end{align*}
\end{cor}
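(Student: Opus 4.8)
The plan is to produce a $k$-character of $\H$ out of the line $\W$ supplied by Chevalley's theorem and then use the hypothesis to trivialize it. Let $\rho\colon \G \to \GL(\V)$ be the $k$-representation and $\W_k \subset \V_k$ the one-dimensional $k$-subspace from Theorem~\ref{thm:Chevalley}, so that $\H = \{g \in \G : \rho(g)\W = \W\}$. Fix a nonzero vector $v \in \W_k$; I claim this $v$ works. Since each $g \in \H$ preserves the line $\W = Kv$, there is a unique $\chi(g) \in K^\times$ with $\rho(g)v = \chi(g)\,v$, and $\chi\colon \H \to \Gm$ is a group homomorphism because $\rho$ is.

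First I would verify that $\chi$ is a $k$-character of $\H$, i.e. a $k$-morphism of groups $\H \to \Gm/k$. Extending $v$ to a $k$-basis of $\V_k$ and letting $T_{ij} \in k[\GL(\V)]$ be the associated matrix coordinates, the relation $\rho(g)v = \chi(g)v$ identifies the restriction of $\chi$ to $\H$ with the restriction of $T_{11}\circ\rho$, which lies in $k[\G]$ since $\rho$ is a $k$-morphism; hence $\chi \in k[\H]$. Because $\rho(g)^{-1}$ also stabilizes $\W$, one has $\chi(g)^{-1} = \chi(g^{-1})$, so $1/\chi \in k[\H]$ as well, and therefore $\chi\colon \H \to \Gm$ is a $k$-morphism.

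Next, since by assumption $\H$ has no nontrivial $k$-character, $\chi$ must be trivial: $\rho(g)v = v$ for all $g \in \H$. This gives $\H \subseteq \{g \in \G : gv = v\}$. For the reverse inclusion, if $g \in \G$ fixes $v$ then $g$ maps the line $\W = Kv$ onto itself, so $g \in \H$ by the defining property of $\W$. Hence $\H = \{g \in \G : gv = v\}$, which is the assertion.

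I expect the only step needing real care to be the middle one — checking that $\chi$ is a morphism over $k$ rather than merely over $K$. Everything else is formal once Chevalley's theorem is available; what makes the $k$-rationality go through is precisely that Theorem~\ref{thm:Chevalley} produces $\W$ as a $k$-subspace of $\V_k$ together with a genuine $k$-representation $\rho$, not just their base changes to $K$.
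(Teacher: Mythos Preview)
Your proof is correct and follows essentially the same approach as the paper's: both arguments observe that the action of $\H$ on the one-dimensional $\W$ from Chevalley's theorem yields a $k$-character $\H \to \GL(\W) \cong \Gm$, which must be trivial by hypothesis, so any nonzero $v \in \W_k$ is fixed by $\H$. The paper's version is more terse---it simply asserts that ``the $k$-morphism $\H \to \GL(\W)$ corresponds to a $k$-character''---whereas you spell out explicitly why $\chi$ lies in $k[\H]$ and why $1/\chi$ does as well, and you also record the (obvious) reverse inclusion; but the underlying idea is identical.
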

\begin{proof}
  Since $\dim(\W) = 1$, the $k$-morphism $\H \to \GL(\W)$ corresponds to
  a $k$-character, and thus the action of $\H$ on $\W$ is trivial. It
  suffices then to pick any $v \in \W_k$. 
\end{proof}

\section{The Lie algebra}

We would like to define a notion of tangent space for an algebraic $k$-group
$\G$; we will consider such a tangent space at the neutral element $e = 1 \in \G$. 
Let first consider the case $\G = \GL(\V)$ for some $k$-vector space $\V_k \cong
k^n$.  Then $\GL(\V)$ is open in the vector space $\End(\V) \cong K^{n\times
n}$, and the ``analytical situation'' $\GL_n(\R) \subset \R^{n \times n}$
suggests to define the tangent of $\GL(\V)$ (say at $e=1$) as the whole vector
space $\End(\V)$. 

Let us choose a basis $(T_{ij})$ for $\End(\V_k)^*$, so that any $f \in
K[\GL(\V)]$ is expressible as a polynomial in $T_{ij}$ and $\det(T_{ij})^{-1}$.
In particular for any $T_{ij}$ the {\bf formal} partial differential
$\frac{\del}{\del T_{ij}} f$ makes sense, and is again an element in
$K[\GL(\V)]$. Then  the total differential is defined at any $x \in
\GL(\V)$ by $ df_x = \sum \frac{\del f}{\del T_{ij}} (x) \cdot T_{ij} \in \End(\V)^*$.

Let $\G \subset \GL(\V)$ be an algebraic $k$-group. Then we define its tangent
space at $e = 1$ as the vector subspace 
\begin{align}
        \Lie(\G) &= \left\{ X \in \End(\V) \;|\; df_e(X) = 0 \; \forall f \in I(\G)
        \right\};
        \label{eq:Lie-algebra}
\end{align}
and we call it the \emph{Lie algebra} of $\G$. It is customary to denote
the Lie algebra by the corresponding minuscule Fraktur letter: 
$\g = \Lie(G), \mathfrak{h} = \Lie(\H)$, etc.
We further define $\g_k = \g \cap \End(\V_k)$, and more generally in the same
way $\g_\ell$ for any field extension $\ell/k$.

\begin{example}
        The Lie algebra $\mathfrak{gl}(V)$ of $\GL(\V)$ is the whole
        vector space $\End(\V)$. 
\end{example}

\begin{rmk}
        More precisely, the term ``Lie algebra'' refer to the vector space $\g =
        \Lie(\G)$ together with the anticommutative product $[X, Y] = XY - YX$,
        which is called the \emph{Lie bracket}. One can check that this 
        product (which is defined here on the level of $\End(\V)$) is indeed
        closed inside $\g$, that is,   $[\cdot, \cdot]: \g \times \g \to \g$. 
        In other words: $\g$ is a Lie subalgebra of $\mathfrak{gl}(V)$.
\end{rmk}

\begin{rmk}
        We could use the Lie algebra to define a notion of dimension for $\G$,
        by setting $\dim(\G) = \dim(\g)$. The usual way to proceed it to define
        $\dim(\G)$ using commutative algebra -- as the \emph{Krull dimension} of
        $K[\G]$. One then checks that it coincides with $\dim(\g)$, which
        reflects the fact that $\G$ is ``smooth at $e$'' (and  thus smooth
        everywhere).
\end{rmk}

\begin{example}
        The Lie algebra $\mathfrak{sl}(V)$ of $\SL(\V)$ is given by
        \begin{align*}
                \mathfrak{sl}(V) &= \left\{ X \in \mathfrak{gl}(\V) \;|\; \mathrm{tr}(X) =
        0 \right\}. 
        \end{align*}
        Thus, $\dim(\SL(\V)) = \dim(\sl(\V)) = n^2 -1$, where $n = \dim(\V)$.
        The fact that the Lie bracket is closed in $\mathfrak{sl}(V)$ follows
        from the equality: $\tr(XY) = \tr(YX)$.
\end{example}

If $\phi: \G \to \H$ is a $k$-morphism of groups, then we obtain a natural
notion of differential at $e$: $d\phi_e: \g \to \Lie(H)$. It is the algebraic
version of the usual ``analytic'' differential that is defined for $k = \R$ or
$\C$. More precisely,  if $k \subset \C$ we can always consider the extension of
scalars  $\G/\C$, and for $X \in \g_\C \subset \End(\V_\C)$ the differential can be then
computed by the usual formula:
\begin{align}
        d\phi_e(X) &= \lim_{t \to 0} \frac{\phi(e + t X) - \phi(e)}{t}.
        \label{eq:diff-C}
\end{align}

\begin{rmk}
     That $d \phi_e (X)$ indeed lies in $\Lie(\H)$ can be easily checked by using
     the chain rule $d(f \circ \phi)_e = df_{\phi(e)} \circ d\phi_e$, still
     valid in the algebraic setting (here for $f \in K[\H]$). 
     From our definitions it is more difficult to check -- but still true --
     that for a $k$-morphism the differential restricts to a $k$-linear map
     $\g_k \to \Lie(\H)_k$. Similarly, one has $[\g_k , \g_k] \subset \g_k$, so that
     the Lie bracket restricts to the $k$-points (this justifies the term ``Lie
     algebra'' for $\g_k$ as well).
\end{rmk}

For $k \subset \R$, recall the exponential map $\exp: \GL(\V_\R) \to
\End(\V_\R)$ defined by the (converging) sequence:
\begin{align}
        \exp(X) = e^X &= \sum_{j=0}^\infty \frac{X^j}{j!}.
        \label{eq:exponent-map}
\end{align}
Then for any algebraic group $\G \subset \GL(\V)$ over $k \subset \R$, 
the exponential restricts to a map $\exp: \g_\R \to \G(\R)$. 

\begin{rmk}
        The group $\G(\R)$ is an example of a \emph{Lie group}, and much of its
        study is based on the study of the real Lie algebra $\g_\R$.
\end{rmk}

\begin{example}
       In the case of $\SL$, the fact that $\exp(\sl(\V_\R)) \subset \SL(\V_\R)$  
       follows from the formula $e^{\tr(X)} = \det(e^X)$.
\end{example}

\section{The adjoint representation}

For the rest of this section we suppose that $k \subset \R$.
Let $\G$ be a $k$-group, and for $g \in \G$ we consider the \emph{inner
automorphism} $\Int(g): \G \to \G$, defined  by $\Int(g) x = g x g^{-1}$. 
The differential $d\Int(g)_e$ determines a linear map from $\g \to \g$, and thus an
element of $\GL(\g)$. We denote by $\Ad(g): \g \to \g$ this map.

\begin{lemma}
       Let $\G \subset \GL(\V)$, and $g \in \G$.  Then for $X \in \g$, one has
       $\Ad(g) X = g X g^{-1}$.
        \label{lemma:Adj}
\end{lemma}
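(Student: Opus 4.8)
The idea is that conjugation by $g$ is the restriction to $\G$ of a \emph{linear} self-map of the ambient space $\End(\V)$, and the differential of a linear map is itself.

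First I would observe that for fixed $g \in \G$ the inner automorphism $\Int(g)\colon \G \to \G$, $x \mapsto gxg^{-1}$, is the restriction of the morphism $\Phi_g\colon \GL(\V) \to \GL(\V)$, $x \mapsto gxg^{-1}$ (well-defined since $\det(gxg^{-1}) = \det(x)$), which in turn is the restriction to the principal open set $\GL(\V) \subset \End(\V)$ of the $K$-linear automorphism $L_g\colon \End(\V) \to \End(\V)$, $Y \mapsto gYg^{-1}$. Under the identification $\Lie(\GL(\V)) = \End(\V)$ from the previous section, I would compute $d(\Phi_g)_e$: for any $f \in K[\GL(\V)]$ the chain rule gives $d(f\circ L_g)_e(Y) = df_e(L_gY)$, because the formal partial derivatives of the coordinates of $L_g$ are constant (equal to the entries of $L_g$ itself). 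Hence $d(\Phi_g)_e = L_g$ as a map $\End(\V) \to \End(\V)$.

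Next I would transport this across the inclusion $\iota\colon \G \hookrightarrow \GL(\V)$. Since $\iota$ is the inclusion of a closed subset, its differential $d\iota_e\colon \g \to \End(\V)$ is the natural inclusion $\g \subset \End(\V)$. From $\iota \circ \Int(g) = \Phi_g \circ \iota$ and the chain rule $d(f\circ\phi)_e = df_{\phi(e)}\circ d\phi_e$ we get, at $e$,
\begin{align*}
  d\iota_e \circ d(\Int(g))_e &= d(\Phi_g)_e \circ d\iota_e = L_g \circ d\iota_e,
\end{align*}
i.e. the inclusion $\g \subset \End(\V)$ intertwines $\Ad(g) = d(\Int(g))_e$ with $L_g$. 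Therefore $\Ad(g)X = L_g(X) = gXg^{-1}$ for every $X \in \g$, which is the claim. (In particular this re-confirms that $L_g$ preserves $\g$, as it must since $\Int(g)$ is a morphism $\G \to \G$.)

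There is essentially no hard step here; the only point requiring a little care is the bookkeeping of differentials under the two restrictions $\End(\V) \supset \GL(\V) \supset \G$, and the observation that the differential of a linear map (in the identification $\Lie(\GL(\V)) = \End(\V)$) is that same linear map. Alternatively, if one prefers, after extending scalars to $\C$ one may invoke the analytic formula $d(\Int(g))_e(X) = \lim_{t\to 0}\bigl(\Int(g)(e+tX) - e\bigr)/t = \lim_{t\to 0} g(tX)g^{-1}/t = gXg^{-1}$ and then use $K$-linearity of both sides of the asserted identity to descend from $\C$ back to $K$.
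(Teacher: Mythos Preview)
Your proof is correct. Your primary argument --- that conjugation by $g$ is the restriction to $\G$ of a linear automorphism $L_g$ of $\End(\V)$, whose differential is therefore $L_g$ itself, and then transporting this to $\g$ via the chain rule applied to the inclusion $\G \hookrightarrow \GL(\V)$ --- is purely algebraic and works without any assumption on the base field. The paper, by contrast, takes exactly the analytic route you mention as an alternative at the end: it simply invokes the limit formula $d\phi_e(X) = \lim_{t\to 0}\bigl(\phi(e+tX)-\phi(e)\bigr)/t$ (recorded just before the lemma, and available because the section assumes $k\subset\R$) and reads off $gXg^{-1}$ directly from $\Int(g)(e+tX) = e + t\,gXg^{-1}$. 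Your main approach is more self-contained and does not rely on the standing hypothesis $k\subset\R$; the paper's approach is a one-line computation but leans on that hypothesis. Your closing parenthetical remark about the analytic alternative is essentially the paper's entire proof.
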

\begin{proof}
        Since we assume that $k \subset \R$, this can be proved simply be
        computing the differentiation formula \eqref{eq:diff-C}, where $X \in
        \End(\V)$ is seen as an element of $\C^{n\times n}$ (for $n = \dim(\V)$).
\end{proof}

The explicit formula for $\Ad(g)$ given in the lemma permits to show the
following important facts:

\begin{itemize}
        \item If $g \in \G(k)$, then $\Ad(g)$ restricts to a $k$-linear map
                $\g_k \to \g_k$. Similarly for any field extension $\ell/k$.
        \item $\Ad(g)$ preserves the Lie bracket $[\cdot, \cdot]$ on $\g$, and thus  
                is a automorphism of Lie algebra: $\Ad(g) \in \Aut(\g)$.
        \item The map $\Ad: \G \to \GL(\g)$ is a $k$-morphism, and thus a
                $k$-representation of $\G$ on $\g$. It follows from
                Proposition~\ref{prop:image-closed} that the image $\Ad(\G)$ is
                a $k$-subgroup of $\GL(\g)$.
\end{itemize}

\begin{definition}
        The representation $\Ad: \G \to \GL(\g)$ is called the \emph{adjoint
        representation} of $\G$. The image $\Ad(\G)$ is called the
        \emph{adjoint} group of $\G$.
\end{definition}

\begin{rmk}
        One can show that the group of Lie algebra automorphisms $\Aut(\g)$ is
        itself a $k$-group (i.e., a $k$-closed subgroup of $\GL(\g)$). Then
        $\Ad(\G)$ is a $k$-subgroup of $\Aut(\g)$.
\end{rmk}

For a $k$-group $\G$, let us denote by $Z(\G)$ its center (it is an algebraic
group). If $g \in Z(\G)$ obviously $\Int(g) : \G \to \G$ is the identity, so
that $\Ad(g): \g \to \g$ is trivial. Thus $Z(\G) \subset \ker(\Ad)$. In
characteristic zero, the converse holds as well, assuming that $\G$ is connected
(i.e., connected with respect to the Zariski topology). See \cite[\S 13.4]{Humph75}
for the proof.

\begin{prop}
        \label{prop:kern-Ad-center}
        If $\G$ is connected, then $\ker(\Ad) = Z(\G)$.
\end{prop}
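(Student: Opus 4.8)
The plan is to prove the nontrivial inclusion $\ker(\Ad) \subset Z(\G)$, since the reverse inclusion $Z(\G) \subset \ker(\Ad)$ was already observed above (any central $g$ gives $\Int(g) = \mathrm{id}$, hence $\Ad(g) = d\Int(g)_e = \mathrm{id}$). So let $g \in \ker(\Ad)$; I must show $g$ commutes with every element of $\G$, i.e.\ $\Int(g) = \mathrm{id}_\G$. The key idea is to use connectedness to reduce a global statement (about $\Int(g)$ on all of $\G$) to an infinitesimal one (about $d\Int(g)_e$ on $\g$), which is exactly what the hypothesis $\Ad(g) = \mathrm{id}$ provides.

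First I would fix $g \in \ker(\Ad)$ and consider the morphism $\Int(g) \colon \G \to \G$. Since $\Int(g)$ is an automorphism of $\G$ fixing $e$, its differential at $e$ is $\Ad(g) = \mathrm{id}_{\g}$ by hypothesis. The heart of the argument is then the principle, valid in characteristic zero, that a morphism of connected algebraic groups is determined by its differential at the identity; equivalently, if $\psi \colon \G \to \G$ is a homomorphism of the connected group $\G$ with $d\psi_e = \mathrm{id}$, then $\psi = \mathrm{id}$. Applying this with $\psi = \Int(g)$ yields $\Int(g) = \mathrm{id}_\G$, i.e.\ $g$ is central. For this I would invoke \cite[\S~13.4]{Humph75} as the excerpt already does, but I would sketch the mechanism: over $\C$ the subgroup generated by $\exp(\g_\C)$ is Zariski-dense in the connected group $\G$ (the image of $\exp$ contains a Zariski-open neighbourhood of $e$), and $\Int(g)$ acts as the identity on every $\exp(X)$ because $\Int(g)(\exp X) = \exp(\Ad(g) X) = \exp(X)$; density then forces $\Int(g) = \mathrm{id}$ on all of $\G$. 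One can run the complex argument after extension of scalars $\G/\C$ and descend, since being central is detected on $\C$-points which are Zariski-dense by Lemma~\ref{lemma:Zariski-dense}.

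The main obstacle is justifying the passage from ``$\Int(g)$ is trivial on the image of $\exp$'' to ``$\Int(g)$ is trivial on $\G$'': this is precisely where connectedness and characteristic zero are essential, and a fully rigorous treatment requires either the density of the one-parameter subgroups or the fact that the induced map on Lie algebras being an isomorphism makes a morphism of connected groups an isomorphism onto its image with the identity differential forcing equality. Rather than reproving this foundational result, I would state it cleanly and cite \cite[\S~13.4]{Humph75}, making clear that the commutator relation $\Int(g)\exp(X) = \exp(\Ad(g)X)$ together with Zariski-density of $\langle \exp(\g_\C)\rangle$ in the connected group $\G(\C)$ is the crux. A secondary subtlety worth a remark: connectedness cannot be dropped, since for a finite nontrivial group $\Ad$ is trivial but the center may be proper; this explains the hypothesis.
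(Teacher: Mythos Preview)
The paper does not actually prove this proposition: it observes the easy inclusion $Z(\G)\subset\ker(\Ad)$ in the paragraph preceding the statement and then refers to \cite[\S 13.4]{Humph75} for the converse, which is precisely what you do as well. Your added sketch via $\Int(g)\exp(X)=\exp(\Ad(g)X)$ and density is correct and helpful; one small slip: the image of $\exp$ contains an \emph{analytic}-open neighbourhood of $e$, not a Zariski-open one, but since a nonempty analytic-open subset of an irreducible complex variety is Zariski-dense, your conclusion is unaffected.
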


\begin{rmk}
In particular, in this situation we see that $Z(\G)$ is a  $k$-group
(assuming that $\G$ itself is a $k$-group).
\end{rmk}

\begin{cor}
  \label{cor:adjoint-type}
  Let $\G$ be a connected $k$-group that has trivial center. Then $\G$
  is $k$-isomorphic to $\Ad(\G)$. 
\end{cor}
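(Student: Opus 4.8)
The plan is to deduce the corollary directly from Proposition~\ref{prop:kern-Ad-center} by showing that the adjoint representation $\Ad \colon \G \to \GL(\g)$ is a $k$-isomorphism onto its image $\Ad(\G)$. We already know from the discussion preceding the corollary that $\Ad$ is a $k$-morphism of groups and that its image $\Ad(\G)$ is a $k$-subgroup of $\GL(\g)$ (using Proposition~\ref{prop:image-closed}), so the only thing missing is injectivity together with the fact that the inverse is again a $k$-morphism.

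First I would observe that since $\G$ is connected, Proposition~\ref{prop:kern-Ad-center} gives $\ker(\Ad) = Z(\G)$, and by hypothesis $Z(\G)$ is trivial, so $\ker(\Ad) = \{e\}$. Hence $\Ad$ is an injective $k$-morphism of groups onto the $k$-group $\Ad(\G)$. Thus the corestriction $\Ad \colon \G \to \Ad(\G)$ is a bijective $k$-morphism of algebraic groups. It remains to argue that a bijective $k$-morphism of algebraic groups in characteristic zero is a $k$-isomorphism, i.e.\ that the inverse set-map is also a $k$-morphism.

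The main obstacle is precisely this last point: in general a bijective morphism of varieties need not be an isomorphism (the Frobenius morphism in positive characteristic, or a normalization map, are the standard cautionary examples), so one genuinely needs to invoke that $\mathrm{char}(k) = 0$. The clean way is to cite the general structure theory: a bijective morphism of algebraic groups in characteristic zero is an isomorphism of algebraic groups (this can be found, for instance, in the references \cite{Humph75} or \cite{Bor91} on linear algebraic groups, and it rests on the separability of morphisms of smooth varieties over a perfect field of characteristic zero, so that a bijective morphism is automatically birational and, being a morphism of groups hence of homogeneous spaces, an isomorphism). Since everything in sight is defined over $k$ and $k$ has characteristic zero, the inverse is moreover a $k$-morphism. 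Putting these together, $\Ad \colon \G \to \Ad(\G)$ is a $k$-isomorphism, which is the claim.

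I would keep the written proof short: state that $\ker(\Ad) = Z(\G) = \{e\}$ by Proposition~\ref{prop:kern-Ad-center} and the triviality hypothesis, note that $\Ad(\G)$ is a $k$-group by the earlier discussion, and conclude that the bijective $k$-morphism $\Ad \colon \G \to \Ad(\G)$ is a $k$-isomorphism because in characteristic zero every bijective morphism of algebraic groups is an isomorphism, with a pointer to the standard references. No serious computation is involved; the one subtlety to flag explicitly is the characteristic-zero hypothesis, without which the statement fails.
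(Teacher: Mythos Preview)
Your proposal is correct and follows essentially the same approach as the paper's own proof: both argue that $\Ad\colon \G \to \Ad(\G)$ is a bijective $k$-morphism (the paper says this directly, you deduce it via Proposition~\ref{prop:kern-Ad-center}), and then invoke the characteristic-zero fact that such a morphism has a $k$-morphism inverse, citing a standard reference (the paper points to \cite[Ex.~5.3.5]{Spring98}, you to \cite{Humph75} or \cite{Bor91}).
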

\begin{proof}
  It is clear that as an abstract group $\Ad(\G)$ is isomorphic to $\G$,
  and in particular the surjective $k$-morphism $\Ad: \G \to \Ad(\G)$ is invertible.
  Under our assumption $\mathrm{char}(k) = 0$, it turns out that the inverse
  $\Ad^{-1}$ is then necessarily a $k$-morphism as well (cf.
  \cite[Ex.~5.3.5]{Spring98}), so that $\G$ is $k$-isomorphic to
  $\Ad(\G)$.
\end{proof}

\begin{definition}[Adjoint group]
  A connected group $\G$ is called \emph{adjoint} if its kernel $Z(\G)$
  is trivial.
\end{definition}

\section{Unipotent elements}
\label{sec:unipotents}

Let $\V$ be a vector space over $\C$, of finite dimension $n$ . 

\begin{definition}[Nilpotent element]
       An element $X \in \End(\V)$ is called \emph{nilpotent}
       if $X^p = 0$ for some integer $p$.
\end{definition}

\begin{prop}
        For $X \in \End(\V)$, the following are equivalent:
        \begin{itemize}
                \item $X$ is nilpotent.
        \item All eigenvalues of $X$ are $0$. 
        \item $\tr(X^j) = 0$ for $j = 1,\dots, n$.
        \end{itemize}
        \label{prop:nilpotents}
\end{prop}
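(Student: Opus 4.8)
The plan is to prove the three equivalences in a cycle: $X$ nilpotent $\Rightarrow$ all eigenvalues are $0$ $\Rightarrow$ $\tr(X^j)=0$ for $j=1,\dots,n$ $\Rightarrow$ $X$ nilpotent. The first two implications are elementary, and the real content is the last one (the converse direction), which is the classical argument via Newton's identities.

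For the first implication, suppose $X^p=0$ and let $\lambda$ be an eigenvalue with eigenvector $v\neq 0$; then $0 = X^p v = \lambda^p v$, forcing $\lambda=0$. For the second implication, since we work over $\C$ (algebraically closed), $X$ is conjugate to an upper-triangular matrix whose diagonal entries are the eigenvalues of $X$, all equal to $0$ by hypothesis; then for each $j$, $X^j$ is conjugate to an upper-triangular matrix with zero diagonal, so $\tr(X^j)=0$. (Equivalently: the eigenvalues of $X^j$ are the $j$-th powers of the eigenvalues of $X$, hence all zero, and the trace is their sum.)

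For the third implication -- the main obstacle -- assume $\tr(X^j)=0$ for $j=1,\dots,n$. Let $\lambda_1,\dots,\lambda_n$ be the eigenvalues of $X$ (with multiplicity), so that $p_j := \sum_i \lambda_i^j = \tr(X^j) = 0$ for $j=1,\dots,n$. I want to conclude that all $\lambda_i=0$, equivalently that the characteristic polynomial of $X$ is $t^n$, which by Cayley--Hamilton gives $X^n=0$. The elementary symmetric functions $e_1,\dots,e_n$ of the $\lambda_i$ are related to the power sums $p_1,\dots,p_n$ by Newton's identities
\begin{align*}
  e_k &= \frac{1}{k}\sum_{j=1}^{k} (-1)^{j-1} e_{k-j}\, p_j, \qquad k=1,\dots,n,
\end{align*}
where we use $\mathrm{char}(k)=0$ so that division by $k$ is legitimate. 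An immediate induction on $k$ shows $e_1 = \cdots = e_n = 0$: indeed $e_1 = p_1 = 0$, and if $e_1=\cdots=e_{k-1}=0$ then every term on the right-hand side for $e_k$ vanishes (either an $e_{k-j}$ factor with $1\le k-j\le k-1$, or the $j=k$ term $(-1)^{k-1}e_0 p_k = (-1)^{k-1}p_k = 0$). Hence the characteristic polynomial is $t^n - e_1 t^{n-1} + \cdots \pm e_n = t^n$, so all eigenvalues vanish and $X^n=0$ by Cayley--Hamilton, proving $X$ nilpotent. This closes the cycle and establishes all three equivalences.
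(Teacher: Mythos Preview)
Your proof is correct and follows the standard route: the first two implications are immediate, and the substantive step---deducing nilpotency from the vanishing of the first $n$ power-sum traces---is handled cleanly via Newton's identities, using $\mathrm{char}(k)=0$ to divide by $k$, and then Cayley--Hamilton to finish.

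There is nothing to compare against: the paper states this proposition without proof, treating it as a known fact from linear algebra. Your argument is exactly the kind of proof one would supply if asked to fill in the gap.
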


\begin{definition}[Unipotent element]
       An element $g \in \GL(\V)$ is called unipotent if $g - 1$ is nilpotent.
       Equivalently: $g$ is unipotent if all its eigenvalues (in $\C$) equal $1$. 
\end{definition}

Let $\G$ be an algebraic $k$-group for some $k \subset \R$. 
Since $\G \subset \GL(\V_\C)$ and $\g \subset \End(\V_\C)$ for some vector space
$\V$, we may speak of unipotent element in $\G$ or nilpotent elements in $\g$. 

\begin{prop}
        \label{prop:nilp-implies-unip}
       If $\g_k$ possesses a nonzero nilpotent element, then $\G(k)$ has a
       nontrivial unipotent element. 
\end{prop}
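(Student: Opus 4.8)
The plan is to exhibit the required unipotent element explicitly, as $g = \exp(X)$ for a suitable nilpotent $X$. So I would begin by fixing a nonzero nilpotent element $X \in \g_k$, say with $X^p = 0$. Because $X$ is nilpotent, the series \eqref{eq:exponent-map} defining $\exp$ terminates, giving the finite sum
\begin{align*}
  g = \exp(X) = \sum_{j=0}^{p-1} \frac{X^j}{j!}.
\end{align*}
Since $\mathrm{char}(k) = 0$ the scalars $1/j!$ lie in $k$, so $g$ lies in the $k$-subalgebra $\End(\V_k)$ of $\End(\V)$.

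Next I would check that $g$ is a nontrivial unipotent element. Writing $g - 1 = X \cdot u$ with $u = \sum_{j=0}^{p-2} X^j/(j+1)!$, all factors are polynomials in the nilpotent element $X$ and hence commute, so $(g-1)^p = X^p u^p = 0$; thus $g - 1$ is nilpotent, which shows that $g$ is invertible and unipotent (in particular $g \in \GL(\V_k)$). Moreover $u$ is $1$ plus a nilpotent matrix, hence unipotent and so invertible; therefore $g = 1$ would force $X = (g - 1) u^{-1} = 0$, contrary to the choice of $X$. Hence $g \neq 1$.

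It remains to show that $g$ actually lies in $\G(k)$, and this is the only step that uses $X \in \g_k$ rather than merely $X \in \End(\V_k)$; it is also the point I expect to require the most care. Since $k \subset \R$ we may view $X$ as an element of $\g_\R$, and the exponential restricts to a map $\exp \colon \g_\R \to \G(\R)$, so $g \in \G(\R)$; in particular $g$ satisfies all the defining equations in $I_k(\G)$. Combining this with $g \in \GL(\V_k)$ and the fact that, in characteristic zero, $\G$ is defined over $k$ — so that for a $k$-rational matrix, lying in $\G$ is already detected by the $k$-rational equations $I_k(\G)$ — we conclude that $g \in \G(k)$. This $g$ is then a nontrivial unipotent element of $\G(k)$, as desired.
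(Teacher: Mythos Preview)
Your proof is correct and follows the same strategy as the paper: set $g = \exp(X)$, use nilpotence of $X$ to see that the series terminates (hence $g \in \End(\V_k)$), and invoke $\exp\colon \g_\R \to \G(\R)$ to place $g$ in $\G(\R)$, whence $g \in \G(\R) \cap \GL(\V_k) = \G(k)$.

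The only differences are in the two sub-verifications. For unipotence, the paper argues via eigenvalues (eigenvalues of $\exp(X)$ are $e^{\lambda_i} = e^0 = 1$), whereas you factor $g-1 = Xu$ and compute $(g-1)^p = X^p u^p = 0$ directly. For $g \neq 1$, the paper uses a scaling trick: from $\exp(nX) = \exp(X)^n = 1$ one divides the resulting polynomial identity in $n$ by $n^p$ and lets $n \to \infty$ to force $X^p = 0$; your argument instead observes that $u$ is unipotent (hence invertible), so $g-1 = Xu = 0$ would give $X = 0$. Both of your verifications are a bit more algebraic and avoid any appeal to eigenvalues or limits, which is arguably cleaner; the paper's versions are shorter to state but lean on analytic intuition.
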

\begin{proof}
        If $\lambda_i$ are the eigenvalues of $A \in \End(\V_\R)$ then $e^{\lambda_i}$ 
        are the eigenvalues of its exponential $\exp(A)$. Thus for $X \in \g$
        nilpotent, we have that $g = \exp(X) \in \G(\R)$ is unipotent. But the
        nilpotence also implies that $\exp(X)$ is a finite sum (in
        $\End(\V_k)$), so that   $g \in \G(\R) \cap \End(\V_k) = \G(k)$.

        It remains to prove that $g \neq 1$ for $X$ nonzero. Let $p \ge 1$ be the
        least integer such that $X^p \neq 0$, and suppose that $\exp(X) = 1$.
        For any integer $n$ we can use the formula $\exp(n X) = \exp(X)^n$, so
        that we have the following equality:
        \begin{align*}
                1 + nX + \frac{n^2 X^2}{2!} + \dots + \frac{n^p X^p}{p!} &= 1.
        \end{align*}
        By dividing by $n^p$ and letting $n \to \infty$, we obtain $X^p = 0$.
        This contradicts the choice of $p$. 
\end{proof}

\section{Semisimple groups}

Recall that an abstract group $G$ is called simple if it has no proper normal
subgroups (i.e., other than  $\left\{ 1 \right\}$ and $G$). It is useful to adapt
this definition in the context of algebraic groups.
\begin{definition}[Simple algebraic group]
  \label{def:simple-group}
  An algebraic group $\G$ is called \emph{simple} if it is not
  commutative and has no closed connected normal subgroup apart from $\left\{
  1 \right\}$ and $\G$. 
\end{definition}

\begin{rmk}
 Very often one says ``almost simple'' in this context, to distinguish
 from the notion for abstract groups. Note that if $\G$ is simple as an
 abstract group, then it is simple in the sense of
 definition~\ref{def:simple-group}. Of course the converse does not
 hold.
\end{rmk}

\begin{rmk}
 Excluding commutative groups in the definition excludes $\G_a$ and
 $\G_m$, which both have no proper connected normal subgroups. 
\end{rmk}

Note that in the definition of a simple algebraic group one really exclude the
existence of closed normal subgroup, and not only $k$-closed subgroup (i.e.,
$k$-subgroup). The definition can be weakened to the following.

\begin{definition}[$k$-simple group]
       A noncommutative $k$-group $\G$ is called $k$-simple if it has no normal
       connected $k$-subgroup other than $\left\{ 1 \right\}$ and $\G$.
\end{definition}

\begin{rmk}
       Then ``simple'' implies ``$k$-simple'' (but not conversely). 
\end{rmk}

\begin{example}
 The center of $\GL(\V)$ is isomorphic to  $\Gm$ (seen as the 
 subgroup of ``scalar'' diagonal matrices). Since $\Gm$ is connected,
 it follows that $\GL(\V)$ is not a simple algebraic group (nor it is $k$-simple). 
\end{example}

\begin{example}
 The algebraic group $\SL(\V)$ is simple. One has $Z(\SL(\V)) \cong
 \mu_n$, the group of $n$-th roots of unity in $K$ with $n= \dim(\V)$.
 Thus for $n>1$,  $\SL(\V)$ is not simple as an abstract group.  
 Note that $\mu_n$ is finite of order $n$, and thus not connected for $n>1$.
\end{example}

\begin{prop}
        If $\G$ is simple, then the center $Z(\G)$ is finite.
\end{prop}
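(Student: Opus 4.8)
The plan is to show that the center $Z(\G)$ of a simple algebraic group $\G$ is a proper closed normal subgroup, hence must be one of the two trivial possibilities allowed by simplicity, and then to rule out the possibility $Z(\G) = \G$. First I would recall that $Z(\G)$ is always a closed normal subgroup of $\G$ (it is $\ker(\Ad)$ when $\G$ is connected, by Proposition~\ref{prop:kern-Ad-center}, but in any case it is visibly Zariski-closed, being cut out by the commutation conditions $gx = xg$ ranging over $x \in \G$, and it is normal since it is characteristic). The subtlety is that the definition of simple only forbids nontrivial closed \emph{connected} normal subgroups, so I cannot immediately conclude $Z(\G) = \{1\}$; instead I will work with the identity component $Z(\G)^\circ$, which is a closed connected normal subgroup of $\G$.

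The key step is then a dichotomy: since $\G$ is simple, $Z(\G)^\circ$ is either $\{1\}$ or all of $\G$. If $Z(\G)^\circ = \G$, then $\G = Z(\G)$ is commutative, contradicting the hypothesis that a simple group is noncommutative (part of Definition~\ref{def:simple-group}). Therefore $Z(\G)^\circ = \{1\}$, which means $Z(\G)$ is a closed subgroup with trivial identity component. A closed subgroup of an algebraic group has finitely many connected components (the cosets of the identity component form an open cover of the closed, hence quasi-compact-in-the-relevant-sense, subgroup; more concretely $Z(\G)/Z(\G)^\circ$ is a finite group because algebraic groups have only finitely many irreducible/connected components). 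Hence $Z(\G) = Z(\G)/\{1\}$ is finite.

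The main obstacle — really the only point requiring a little care — is the gap between ``no nontrivial connected normal subgroup'' and ``finite center'': one must not forget to pass to the identity component $Z(\G)^\circ$ rather than trying to apply simplicity to $Z(\G)$ directly, and one must invoke the standard fact that an algebraic group has only finitely many connected components. Both ingredients are routine structure theory (the finiteness of components follows from Noetherianity of the Zariski topology, and $Z(\G)^\circ$ being normal follows because conjugation is a Zariski-homeomorphism fixing $1$, hence preserving the identity component). With these in hand the argument is three lines: $Z(\G)^\circ \trianglelefteq \G$ closed connected $\Rightarrow$ it is $\{1\}$ or $\G$; the latter forces $\G$ abelian, excluded; so $Z(\G)^\circ = \{1\}$ and $Z(\G)$, having finitely many components each a single point, is finite.
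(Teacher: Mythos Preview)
Your proposal is correct and follows essentially the same route as the paper: pass to the identity component $Z(\G)^\circ$, use simplicity plus noncommutativity to force $Z(\G)^\circ = \{1\}$, and then invoke the finiteness of $[Z(\G):Z(\G)^\circ]$ to conclude. The paper's version is terser (it cites \cite[\S 7.3]{Humph75} for the properties of the identity component rather than justifying normality and finite index explicitly), but the logical skeleton is identical.
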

\begin{proof}
  We explain the idea.
  The center $Z(\G)$ is a closed normal subgroup, but not necessarily
  connected. But the connected component  $Z(\G)^\circ$ containing $1$ 
  is still closed and normal, and by definition connected (cf. \cite[\S 7.3]{Humph75}
  for more information about the connected component of identity). Since $\G$ is
  not commutative we have necessarily $Z(\G)^\circ = \left\{ 1 \right\}$. It
  is a general fact that the connected component of $1$ has finite index in
  the group; in this case: $[Z(\G) : \left\{ 1 \right\}] < \infty$, so
  that $Z(\G)$ is finite.
\end{proof}

In characteristic zero, there is a strong relation between closed subgroups of
$\G$ and Lie  subalgebras of $\g$. This is the content of the following theorem.
See \cite[\S 13.1 and \S 13.3]{Humph75}

\begin{theorem}
        \label{thm:subgroups-Lie-sbalg}
        Let $\G$ be a connected $k$-group (with $\mathrm{char}(k) = 0$).
        There is a one-to-one correspondence
        between the closed connected subgroups of $\G$ and the Lie subalgebras of $\g$. 
        Under this correspondence, normal subgroups corresponds to ideals in
        $\g$. 
\end{theorem}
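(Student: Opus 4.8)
The plan is to set up the easy half of the correspondence, record a normalizer construction as the bridge, isolate the one genuinely hard ingredient (surjectivity), and then deduce the normal/ideal statement formally. So first I would check that $\H \mapsto \Lie(\H)$ is well defined and injective on closed connected subgroups. Well-definedness (that $\Lie(\H)$ is a Lie subalgebra, and that $\dim\H = \dim\Lie(\H)$ by smoothness in characteristic zero) has been recorded earlier in the chapter. For injectivity, suppose $\H_1,\H_2$ are closed and connected with $\Lie(\H_1) = \Lie(\H_2)$. In characteristic zero one has $\Lie(\H_1\cap\H_2) = \Lie(\H_1)\cap\Lie(\H_2)$ (cf.\ \cite[\S 12]{Humph75}), which here equals $\Lie(\H_1)$; since a group and its identity component share the same Lie algebra, $(\H_1\cap\H_2)^\circ$ and $\H_1$ are then closed connected (hence irreducible) subgroups with $(\H_1\cap\H_2)^\circ\subseteq\H_1$ and equal dimension, forcing $(\H_1\cap\H_2)^\circ=\H_1$, and symmetrically $=\H_2$, so $\H_1=\H_2$. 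Inclusion-preservation is immediate.

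Next I would record the normalizer construction. For a $d$-dimensional subspace $W\subseteq\g$, put $N_\G(W)=\{g\in\G\mid \Ad(g)W=W\}$. Composing the $k$-morphism $\Ad\colon\G\to\GL(\g)$ with the $d$-th exterior power morphism $\GL(\g)\to\GL(\bigwedge^d\g)$ realises $N_\G(W)$ as the preimage of the stabiliser of the line $\bigwedge^d W$; as the preimage of a $k$-closed subgroup under a $k$-morphism it is a closed $k$-subgroup (same argument as in Proposition~\ref{prop:ker-k-group}), and differentiating at $e$ — using $d\Ad_e=\mathrm{ad}$ with $\mathrm{ad}(X)Y=[X,Y]$ — gives $\Lie(N_\G(W))=\{X\in\g\mid [X,W]\subseteq W\}$. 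In particular, if $\mathfrak a\subseteq\g$ is a Lie \emph{subalgebra} then $\mathfrak a\subseteq\Lie\big(N_\G(\mathfrak a)^\circ\big)$, and $\mathfrak a$ is an \emph{ideal} of the latter.

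The heart of the proof is surjectivity: realising a given Lie subalgebra $\mathfrak a\subseteq\g$ as $\Lie(\H)$ for a closed connected subgroup $\H$. (Strictly, one must here assume $\mathfrak a$ \emph{algebraic} in Chevalley's sense — a non-algebraic one-parameter subalgebra such as $\R\cdot\mathrm{diag}(1,\alpha)\subseteq\gl_2$ with $\alpha$ irrational is not of this form — so the statement should be read with this caveat; what matters is that every \emph{ideal} is automatically algebraic, which is exactly what the second assertion needs.) Using the previous step I would first replace $\G$ by $N_\G(\mathfrak a)^\circ$, reducing to the case where $\mathfrak a$ is an ideal of $\g$, and then construct the corresponding closed connected normal subgroup. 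This construction — the theorem that an ideal of the Lie algebra of an algebraic group is always of the form $\Lie(\H)$ — is where the characteristic-zero machinery is essential: Jordan decomposition in $\G$, the exponentials of nilpotent elements (Proposition~\ref{prop:nilp-implies-unip}), and Chevalley's realisation of a subgroup as a stabiliser (Theorem~\ref{thm:Chevalley} and Corollary~\ref{cor:Chevalley}). I would not reprove this but follow \cite[\S 13.3]{Humph75}; this is the main obstacle.

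Finally, the correspondence ``normal subgroups $\leftrightarrow$ ideals'' is then formal. If $\H\trianglelefteq\G$, every $\Int(g)$ stabilises $\H$, hence every $\Ad(g)=d\Int(g)_e$ stabilises $\Lie(\H)$; thus $N_\G(\Lie(\H))=\G$, and the Lie-algebra formula above yields $[\g,\Lie(\H)]\subseteq\Lie(\H)$, i.e.\ $\Lie(\H)$ is an ideal. Conversely, if $\mathfrak a\subseteq\g$ is an ideal and $\H$ is the closed connected subgroup with $\Lie(\H)=\mathfrak a$, then $\{X\mid[X,\mathfrak a]\subseteq\mathfrak a\}=\g$ forces $\Lie(N_\G(\mathfrak a))=\g=\Lie(\G)$, so $N_\G(\mathfrak a)\supseteq\G^\circ=\G$; hence $\Ad(g)\mathfrak a=\mathfrak a$ for all $g\in\G$, and then $g\H g^{-1}$ is a closed connected subgroup with Lie algebra $\Ad(g)\mathfrak a=\mathfrak a=\Lie(\H)$, so $g\H g^{-1}=\H$ by the injectivity established above.
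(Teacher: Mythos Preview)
The paper gives no proof of its own for this theorem; it simply refers to \cite[\S 13.1 and \S 13.3]{Humph75}. Your proposal follows exactly the same route for the hard step (surjectivity), while additionally spelling out the routine parts --- injectivity via dimension and irreducibility, and the normal $\leftrightarrow$ ideal direction via the normalizer $N_\G(\mathfrak a)$ and its Lie algebra --- that the cited reference also contains. So the approaches coincide.

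One point worth recording: you correctly observe that the theorem, as stated in the paper, is not literally true --- the line $K\cdot\mathrm{diag}(1,\alpha)$ inside $\gl_2$ with $\alpha\notin\Q$ is a Lie subalgebra that is not of the form $\Lie(\H)$ for any closed connected $\H$. The paper's formulation is silently assuming the subalgebras are \emph{algebraic} in Chevalley's sense, which is automatic for ideals (hence the second assertion is unaffected) but not in general. Your caveat is well placed and sharper than the text.
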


\begin{rmk}
        By definition, $I \subset \g$ is an ideal if $[I, \g] \subset I$.
\end{rmk}

This theorem suggests that the assertion ``$\G$ simple'' can be checked at the
level of the Lie algebra. This can be still tricky, but tends to be a more
elementary question.

\begin{cor}
        A noncommutative algebraic group $\G$ is simple if and only
        if $\g$ is a simple Lie algebra, i.e., it has no ideals apart from
        $\left\{ 0 \right\}$ and $\g$.
\end{cor}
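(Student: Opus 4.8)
The plan is to read the statement directly off the correspondence of Theorem~\ref{thm:subgroups-Lie-sbalg}. Throughout one works with $\G$ connected (a simple algebraic group is in particular connected, so for the ``only if'' direction this is automatic, and for the ``if'' direction it is part of the intended reading). The key point is that under the bijection of that theorem the proper, nontrivial, closed connected \emph{normal} subgroups of $\G$ are matched precisely with the proper, nontrivial ideals of $\g$: normality corresponds to the ideal property, and the extreme cases match because $\{1\} \leftrightarrow \{0\}$ and $\G \leftrightarrow \g$.

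For ``$\G$ simple $\Rightarrow$ $\g$ simple'': suppose $\g$ had an ideal $I$ with $\{0\} \subsetneq I \subsetneq \g$. By Theorem~\ref{thm:subgroups-Lie-sbalg} there is a closed connected normal subgroup $\H \subseteq \G$ with $\Lie(\H) = I$; since the correspondence is injective and $\Lie(\{1\}) = \{0\}$, $\Lie(\G) = \g$, the strict inclusions $I \neq \{0\}$ and $I \neq \g$ force $\H \neq \{1\}$ and $\H \neq \G$. This contradicts Definition~\ref{def:simple-group}, so $\g$ has no proper nontrivial ideal, i.e.\ $\g$ is simple.

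For the converse, assume $\g$ is simple and $\G$ is noncommutative; I check the two requirements of Definition~\ref{def:simple-group}. The first, noncommutativity, is exactly the hypothesis. For the second, let $\H \subseteq \G$ be any closed connected normal subgroup. By Theorem~\ref{thm:subgroups-Lie-sbalg}, $\Lie(\H)$ is an ideal of $\g$, hence $\Lie(\H) = \{0\}$ or $\Lie(\H) = \g$ by simplicity of $\g$. Injectivity of the correspondence (with $\Lie(\{1\}) = \{0\}$ and $\Lie(\G) = \g$) then forces $\H = \{1\}$ in the first case and $\H = \G$ in the second. Thus $\G$ has no proper nontrivial closed connected normal subgroup, so $\G$ is simple.

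The argument is formal once Theorem~\ref{thm:subgroups-Lie-sbalg} is in hand, so I do not expect a genuine obstacle; the two points requiring a little care are (i) that the theorem matches normality with the ideal property in \emph{both} directions, and (ii) that it is injectivity of the correspondence — rather than a separate dimension or smoothness argument — that lets one conclude $\H = \{1\}$ or $\H = \G$ from the extreme values of $\Lie(\H)$. It is also worth remarking that the noncommutativity hypothesis on $\G$ is essential and not merely decorative: $\G_a$ and $\G_m$ are commutative, yet their Lie algebras, being one-dimensional, have no proper nontrivial ideals, so without that hypothesis the equivalence would fail.
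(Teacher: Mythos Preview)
Your proof is correct and is exactly the argument the paper intends: the corollary is stated there without proof, as an immediate consequence of Theorem~\ref{thm:subgroups-Lie-sbalg}, and your write-up simply makes that deduction explicit by translating closed connected normal subgroups into ideals via the bijection. Your care about connectedness, the two-way reading of ``normal $\leftrightarrow$ ideal'', and the role of the noncommutativity hypothesis (ruling out $\G_a$, $\Gm$) are all appropriate and in line with the surrounding text.
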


It should be noted that the Lie algebra does not completely determine
the algebraic group. For instance, one has the following.  

\begin{lemma}
  \label{lemma:Lie-isogeny}
  Let $\phi: \G \to \H$ be a surjective morphism of group with finite kernel.
  Then the Lie algebra $\h = \Lie(\H)$ is isomorphic to $\g$.
\end{lemma}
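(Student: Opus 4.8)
The plan is to use the differential $d\phi_e : \g \to \h$ and show it is an isomorphism of Lie algebras. Since $\phi$ is a surjective morphism of algebraic groups with finite kernel, I expect that $d\phi_e$ is already known (or easily checked) to be a morphism of Lie algebras, so the content is to prove that it is bijective. The key observation is a dimension count: by the remark relating $\dim(\G)$ to $\dim(\g)$ (the group is smooth at $e$), we have $\dim(\g) = \dim(\G)$ and $\dim(\h) = \dim(\H)$, so it suffices to know that $\dim(\G) = \dim(\H)$ together with either injectivity or surjectivity of $d\phi_e$.

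First I would argue that $\dim(\G) = \dim(\H)$: a surjective morphism with finite kernel has fibers that are cosets of $\ker(\phi)$, hence all finite, and a surjective morphism with finite fibers does not change dimension (this is the standard fact that dimension is additive along a dominant morphism, with the generic fiber dimension being $\dim(\G) - \dim(\H) = 0$). Next I would show $d\phi_e$ is injective. The cleanest route is to identify $\ker(d\phi_e)$ with $\Lie(\ker\phi)$: by Proposition~\ref{prop:ker-k-group} the kernel $N = \ker(\phi)$ is a closed subgroup, and $\Lie(N)$ sits inside $\ker(d\phi_e)$; conversely, using the correspondence of Theorem~\ref{thm:subgroups-Lie-sbalg} (or a direct tangent-space argument) one sees $\ker(d\phi_e) = \Lie(N)$. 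But $N$ is finite, hence zero-dimensional, so $\Lie(N) = 0$ and $d\phi_e$ is injective. Combined with $\dim(\g) = \dim(\h)$, injectivity forces $d\phi_e$ to be a linear isomorphism, and since it already respects the Lie bracket it is an isomorphism of Lie algebras.

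The main obstacle is the rigorous identification $\ker(d\phi_e) = \Lie(\ker\phi)$ (equivalently, that $d\phi_e$ is injective): the inclusion $\Lie(\ker\phi) \subseteq \ker(d\phi_e)$ is immediate from the chain rule $d(f\circ\phi)_e = df_e \circ d\phi_e$, but the reverse inclusion genuinely uses that $\phi$ is separable/smooth — a fact that holds because $\mathrm{char}(k) = 0$, matching the blanket hypothesis of this chapter. In characteristic zero every morphism of algebraic groups is separable, so $d\phi_e$ is surjective onto $\h$ with kernel exactly $\Lie(\ker\phi)$; alternatively one can simply cite that in characteristic zero a surjective morphism with finite kernel is étale at $e$, which gives at once that $d\phi_e$ is an isomorphism. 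I would present the argument via the dimension count plus injectivity, relegating the separability input to a remark or a citation (e.g. to \cite[\S 5]{Humph75}), since a fully self-contained proof of separability is beyond the scope of this survey-style chapter.
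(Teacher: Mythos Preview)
Your proposal is correct and essentially matches the paper's proof: both use $d\phi_e$ and the characteristic-zero identification $\ker(d\phi_e)=\Lie(\ker\phi)$ (citing Humphreys) to conclude injectivity, and then deduce that $d\phi_e$ is an isomorphism. The only organizational difference is that the paper invokes directly the char-zero fact that $\phi$ surjective forces $d\phi_e$ surjective, whereas you reach the same conclusion via the dimension count $\dim\G=\dim\H$; you even mention the paper's route as your alternative.
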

\begin{proof}
        Let $\mathfrak{k} \subset \g$ be the Lie algebra $\Lie(\ker(\phi)$.
       The two following properties are  specific to the
       characteristic zero case (cf. \cite[Chapter V]{Humph75}):
       \begin{itemize}
               \item $\phi$ being surjective, the differential $d\phi_e: \g \to
                       \h$ is surjective;
               \item we have: $\ker(d\phi_e) = \Lie(\ker(\phi))$. 
       \end{itemize}
       For $\ker(\phi)$ finite, we have $\mathfrak{k} = \left\{ 0
       \right\}$, whence $\h \cong \g/\mathfrak{k} = \g$.  
\end{proof}

\begin{rmk}
  Such a map $\phi$ is called an \emph{isogeny}.
\end{rmk}

\begin{cor}
  If $\G$ is simple, then $\Lie(\Ad(\G)) \cong \g$.
\end{cor}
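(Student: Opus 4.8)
The plan is to apply Lemma~\ref{lemma:Lie-isogeny} to the adjoint representation morphism $\Ad : \G \to \Ad(\G)$. By the definition of an adjoint (or $k$-simple, or simple) group and Proposition~\ref{prop:kern-Ad-center}, when $\G$ is connected we have $\ker(\Ad) = Z(\G)$; and by the previous proposition, if $\G$ is simple then $Z(\G)$ is finite. So $\Ad : \G \to \Ad(\G)$ is a surjective morphism of algebraic groups (surjectivity is automatic, since $\Ad(\G)$ is by definition the image) with finite kernel.

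First I would recall that a simple algebraic group is connected by Definition~\ref{def:simple-group}, so that Proposition~\ref{prop:kern-Ad-center} indeed applies and gives $\ker(\Ad) = Z(\G)$. Then I would invoke the proposition stating that $Z(\G)$ is finite when $\G$ is simple. Combining these two facts, $\Ad : \G \to \Ad(\G)$ is an isogeny in the sense of the remark following Lemma~\ref{lemma:Lie-isogeny}. Applying that lemma directly yields $\Lie(\Ad(\G)) \cong \g$, which is the desired conclusion.

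There is essentially no obstacle here: the corollary is a formal consequence of the three results immediately preceding it (the finiteness of $Z(\G)$ for simple $\G$, Proposition~\ref{prop:kern-Ad-center}, and Lemma~\ref{lemma:Lie-isogeny}). The only point requiring a moment's care is confirming that the hypotheses of Lemma~\ref{lemma:Lie-isogeny} are met — namely that $\Ad$ is surjective (true by construction of the adjoint group as $\Ad(\G)$) and has finite kernel (true by the two cited propositions). So the write-up is just a one-line chaining of these facts.
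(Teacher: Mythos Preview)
Your proposal is correct and matches the paper's intended argument exactly: the corollary is stated without proof in the paper, placed immediately after Lemma~\ref{lemma:Lie-isogeny}, precisely because it follows by applying that lemma to $\Ad:\G\to\Ad(\G)$ once one knows $\ker(\Ad)=Z(\G)$ is finite. The only minor quibble is that Definition~\ref{def:simple-group} does not quite \emph{state} that simple groups are connected; rather, the phrasing ``apart from $\{1\}$ and $\G$'' implicitly presumes $\G$ is itself a connected subgroup --- but this reading is standard and is what the paper intends.
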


\begin{prop}
  Let $\G$ be a connected $k$-simple algebraic group. Then $\G$ has no
  nontrivial $k$-character.
\end{prop}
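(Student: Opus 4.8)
The plan is to argue by contradiction using the image $\chi(\G)$ of a purported nontrivial $k$-character. Suppose $\chi\colon \G \to \Gm/k$ is a $k$-character. By Proposition~\ref{prop:image-closed} (or directly), the image $\chi(\G) \subset \Gm$ is a $k$-subgroup, and by Proposition~\ref{prop:ker-k-group} the kernel $\ker(\chi)$ is a normal $k$-subgroup of $\G$. The point is to locate a normal connected $k$-subgroup strictly between $\{1\}$ and $\G$, contradicting $k$-simplicity. If $\chi$ is nontrivial, then $\ker(\chi) \neq \G$, and since $\G$ is connected while $\Gm$ is one-dimensional, $\ker(\chi)$ has positive codimension; in particular $\ker(\chi) \neq \G$ as a $k$-subgroup. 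If moreover $\ker(\chi)$ is connected, then either $\ker(\chi) = \{1\}$ or we have our contradiction. In the case $\ker(\chi) = \{1\}$, the morphism $\chi$ is injective, so $\G$ embeds into the commutative group $\Gm$; but then $\G$ is commutative, contradicting the standing assumption that a $k$-simple group is noncommutative by definition.

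First I would address the connectedness of $\ker(\chi)$. Since $\G$ is connected, $\chi(\G)$ is a connected $k$-subgroup of $\Gm$; the connected $k$-subgroups of $\Gm$ are just $\{1\}$ and $\Gm$ itself (a proper closed subgroup of the one-dimensional $\Gm$ is finite, hence disconnected unless trivial). So if $\chi$ is nontrivial we may assume $\chi(\G) = \Gm$, whence $\chi$ is surjective. Now $\ker(\chi)$ need not be connected a priori, so instead of $\ker(\chi)$ I would work with its identity component $\ker(\chi)^\circ$. Using the fact (cf.\ the proof that $Z(\G)$ is finite for $\G$ simple, and \cite[\S~7.3]{Humph75}) that the identity component of a closed normal $k$-subgroup is again a closed normal connected $k$-subgroup, we get that $N = \ker(\chi)^\circ$ is a normal connected $k$-subgroup of $\G$. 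By $k$-simplicity, $N = \{1\}$ or $N = \G$. If $N = \G$, then $\ker(\chi) = \G$ (since $\ker(\chi)$ contains its identity component $\G$ and lies in $\G$), so $\chi$ is trivial. If $N = \{1\}$, then $\ker(\chi)$ is finite, so $\chi$ is an isogeny onto $\Gm$; by Lemma~\ref{lemma:Lie-isogeny}, $\g \cong \Lie(\Gm)$, which is one-dimensional and abelian. But then $\g$ is abelian, hence every subspace is an ideal, contradicting simplicity of $\g$ (equivalently, $\G$ being noncommutative forces $\dim \g > 1$).

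So the argument splits cleanly: surjectivity of a nontrivial $\chi$ forces $\ker(\chi)$ to have finite index complement-behaviour governed by its identity component, and $k$-simplicity kills the intermediate case, while the "$\chi$ injective" degenerate case is ruled out by noncommutativity. The main obstacle I anticipate is the bookkeeping around connectedness: one must be careful that "normal connected $k$-subgroup" is exactly the class excluded by the definition of $k$-simple, and that passing to the identity component preserves normality and the $k$-structure. Once that is pinned down (it is standard in characteristic zero), the rest is immediate. An alternative, perhaps slicker route is to skip the kernel entirely: a nontrivial $k$-character would be a nonzero morphism $\G \to \Gm$, but $\Gm$ is abelian, so $\chi$ factors through $\G/[\G,\G]$; since $\G$ is $k$-simple and noncommutative, $[\G,\G]$ is a nontrivial normal connected $k$-subgroup, hence equals $\G$, so $\G/[\G,\G]$ is trivial and $\chi$ is trivial. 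I would likely present this second argument as the main proof and relegate the kernel-based reasoning to a remark, since it is cleaner, though it does require knowing that the derived group of a connected group is connected (and defined over $k$).
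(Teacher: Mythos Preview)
Your kernel-based argument is correct and is essentially identical to the paper's proof: take $\H = \ker(\chi)$, pass to its identity component $\H^\circ$, apply the definition of $k$-simple to force $\H^\circ = \G$ or $\H^\circ = \{1\}$, and in the latter case invoke Lemma~\ref{lemma:Lie-isogeny} to conclude $\g \cong \Lie(\Gm)$, contradicting noncommutativity. The paper presents exactly this chain of reasoning (without the preliminary discussion of the image $\chi(\G)$, which is not needed once you use the isogeny lemma).

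Your alternative via the derived subgroup is a genuinely different and cleaner route: it trades the Lie-algebra isogeny lemma for the fact that $[\G,\G]$ is a connected normal $k$-subgroup, so $k$-simplicity plus noncommutativity give $\G = [\G,\G]$, and any character kills commutators. This avoids touching the Lie algebra altogether, at the cost of importing the (standard, characteristic-zero) fact that the derived group of a connected $k$-group is connected and defined over $k$. Either argument is fine; the paper happens to choose the first because Lemma~\ref{lemma:Lie-isogeny} was just proved.
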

\begin{proof}
  Again, we limit ourself to the idea of the proof.
  If $\chi: \G \to \Gm$ is a $k$-character, then $\H = \ker(\chi)$ is a
  normal $k$-subgroup, and so is $\H^\circ$ -- its connected component
  of $1$. By definition of ``$k$-simple'', we have either $\H^\circ =
  \G$ (in which case $\chi$ is trivial), or $\H^\circ = \left\{ 1
  \right\}$. In the latter case $\ker(\chi)$ is finite (since $[\H :
  \H^\circ]$ is finite) and by Lemma~\ref{lemma:Lie-isogeny} we have
  that $\g \cong \Lie(\Gm)$, which is only possible if $\G$ (assumed
  connected) is commutative. 
\end{proof}

\vspace{0.3cm}

A Lie algebra $\g$ is called \emph{semisimple} if it is the direct sum $\g =
\g_1 \oplus \dots \oplus \g_s$ of simple Lie algebras. 

\begin{definition}
        An connected algebraic group $\G$ is called \emph{semisimple}
        if its Lie algebra $\g$ is semisimple.
\end{definition}

\begin{rmk}
        In particular, if $\G$ is simple then it is semisimple.
\end{rmk}

\begin{rmk}
        This is an ad hoc definition in the case of characteristic zero. The
        general definition is the following: $\G$ is semisimple if it has no
        nontrivial connected normal solvable closed subgroup.
\end{rmk}

\begin{example}
       If $\G_1,\dots, \G_s$ are simple algebraic group, then their product $\G_1
       \times \dots \times \G_s$ is semisimple, with Lie algebra $\g = \g_1
       \oplus \dots \oplus \g_s$ (where $\g_i = \Lie(\G_i)$). 
\end{example}

We state here without proof the following facts that should clarify the
structure of semisimple algebraic groups. 
The first three properties are an extension of the situation for simple
groups, which has been discussed above. The last point follows from
\cite[Theorem 27.5]{Humph75}.
Let $\G$ be a connected semisimple $k$-group.

\begin{itemize}
        \item The center $Z(\G)$ is finite.
        \item The Lie algebra of $\Ad(\G)$ is isomorphic to $\g$. 
                In particular, $\Ad(\G)$ is semisimple (and adjoint).
        \item  $\G$ has no nontrivial character (in
                particular no nontrivial $k$-character). 
        \item If $\G$ is adjoint, then $\G$ is $k$-isomorphic to a product $\G =
                \G_1 \times \dots \times \G_s$, where each $\G_i$ is simple
                adjoint. 
\end{itemize}

\chapter{Uniform arithmetic lattices}
\label{ch:uniform-arithm-lattices}

Unless stated otherwise, the algebraic groups considered in this chapter are
connected. 

\section{The compactness criterion}
\label{sec:compact-criterion}

The goal of this chapter is to prove the following result.

\begin{theorem}
  \label{thm:compactness-1}
  Let $\G$ be a semisimple adjoint $\Q$-group.  If $\G(\Q)$ does not contain any nontrivial unipotent 
  element, then $\G(\R)/\G_\Z$ is compact. 
\end{theorem}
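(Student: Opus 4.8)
The plan is to realize $\G(\R)/\G_\Z$ as (the image of) a $\G(\R)$-orbit in a space of Euclidean lattices and then invoke Mahler's criterion (Theorem~\ref{thm:Mahler}). I would run the argument through the adjoint representation $\Ad\colon\G\to\GL(\g)$: it is injective since $\G$ is adjoint (Proposition~\ref{prop:kern-Ad-center}), and $\Ad(\G(\R))\subseteq\SL(\g_\R)$ because $\G$, being semisimple, has no nontrivial $\Q$-character, so the character $\det\circ\Ad$ is trivial. Put $N=\dim\g$ and fix a $\Z$-lattice $L_0\subseteq\g_\Q$; after rescaling $L_0$ (which does not change its stabilizer) we may assume $[L_0,L_0]\subseteq L_0$, so that with respect to a $\Z$-basis of $L_0$ the adjoint endomorphism $\mathrm{ad}(X)\colon Y\mapsto[X,Y]$ has integer matrix for every $X\in L_0$. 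Let $\Gamma=\{g\in\G(\Q)\mid\Ad(g)L_0=L_0\}$; by a commensurability argument as in Proposition~\ref{prop:arithm-commensur} it is commensurable with $\G_\Z$, hence by Proposition~\ref{prop:commens-preserves-lattice} it is enough to show that $\G(\R)/\Gamma$ is compact. Since $\G$ is semisimple, $\Gamma$ is in any case a lattice in $\G(\R)$ by Theorem~\ref{thm:Borel-HC}.

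Let $X$ be the space of Euclidean lattices in $\g_\R\cong\R^N$. The orbit map $g\mapsto\Ad(g)L_0$ descends to a continuous injection $\bar\psi\colon\G(\R)/\Gamma\hookrightarrow X$ --- injectivity holds because the full $\G(\R)$-stabilizer of $L_0$ equals $\Gamma$, using that $\Ad$ is a $\Q$-isomorphism onto $\Ad(\G)$ (Corollary~\ref{cor:adjoint-type}) --- and the image of $\bar\psi$ lies in the closed subset of $X$ formed by the lattices of covolume $\covol(L_0)$. The step I expect to be the main obstacle is to prove that $\bar\psi$ is \emph{proper}, equivalently that the orbit $\Ad(\G(\R))\cdot L_0$ is closed in $X$; this is not a formality and should be deduced from reduction theory, the same circle of ideas underlying Theorem~\ref{thm:Borel-HC}. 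Granting this, suppose $\G(\R)/\Gamma$ is not compact. Then $\Ad(\G(\R))\cdot L_0$ is not relatively compact in $X$; since the covolume is constant along this orbit, Mahler's criterion shows that the systole is not bounded below on it, so there is a sequence $g_n\in\G(\R)$ with $\syst(\Ad(g_n)L_0)\to0$, and hence nonzero vectors $w_n\in L_0$ with $\norm{\Ad(g_n)w_n}\to0$.

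From this I would manufacture a nontrivial unipotent element of $\G(\Q)$, contradicting the hypothesis. Each $\Ad(g_n)$ is an automorphism of the Lie algebra $\g$, so it conjugates $\mathrm{ad}(w_n)$ to $\mathrm{ad}(\Ad(g_n)w_n)$; therefore these two endomorphisms of $\g$ share the same characteristic polynomial $P_n(t)$, which lies in $\Z[t]$ because $\mathrm{ad}(w_n)$ has integer matrix. As $\Ad(g_n)w_n\to0$ we have $\mathrm{ad}(\Ad(g_n)w_n)\to0$, so $P_n(t)\to t^N$ coefficient by coefficient; being monic of degree $N$ with integer coefficients, $P_n(t)$ must equal $t^N$ for all large $n$. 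Thus $\mathrm{ad}(w_n)$ is nilpotent once $n$ is large, and since $\g$ is semisimple this forces $w_n$ itself to be nilpotent as an endomorphism of the defining module (for a semisimple Lie algebra, $\mathrm{ad}$-nilpotence is equivalent to nilpotence in any faithful representation). Hence $w_n$ is a nonzero nilpotent element of $\g_\Q$, and Proposition~\ref{prop:nilp-implies-unip} then yields a nontrivial unipotent element of $\G(\Q)$ --- the desired contradiction. Therefore $\G(\R)/\Gamma$ is compact, and so, by Proposition~\ref{prop:commens-preserves-lattice}, is $\G(\R)/\G_\Z$.
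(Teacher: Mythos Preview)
Your overall strategy matches the paper's proof closely: pass to the adjoint representation, embed $\G(\R)/\Gamma$ in the space of Euclidean lattices in $\g_\R$, and use Mahler's criterion together with the integrality of certain conjugation-invariant quantities to extract a nonzero nilpotent element of $\g_\Q$.

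The gap you flag --- properness of the orbit map $\bar\psi$ --- is real, but you mislocate its source. It does \emph{not} require reduction theory. The paper handles it in Lemma~\ref{lemma:proper-embedding} and Proposition~\ref{prop:proper-embedding} by a short, self-contained argument resting on Corollary~\ref{cor:Chevalley} to Chevalley's theorem: realize $\Ad(\G)$ as the stabilizer in $\GL(\g)$ of a single rational vector $v$ in some $\Q$-representation, choose a $\Z$-lattice containing $v$ and stable under $\GL_N(\Z)$ (Lemma~\ref{lemma:arithm-stab-lattice}), and use discreteness of the integral orbit of $v$ to show that $\Ad(\G)(\R)\cdot\GL_N(\Z)$ is closed in $\GL_N(\R)$. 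No Siegel sets or fundamental domains enter. This matters, since the point of Theorem~\ref{thm:compactness-1} is precisely that it yields cocompactness \emph{without} invoking the machinery behind Theorem~\ref{thm:Borel-HC}; your appeal to the latter is harmless but unnecessary.

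Your nilpotence step is a legitimate variant of the paper's. The paper works directly in the ambient $\End(\V)$ with $\G\subset\GL(\V)$: there $\g_\Z = \End(L)\cap\g_\Q$ already consists of integer matrices, the traces $\tr(X_n^j)$ are conjugation-invariant integers tending to $0$, hence vanish, and Proposition~\ref{prop:nilpotents} gives nilpotence of $X_n$ itself. You instead pass through $\mathrm{ad}$, which obliges you to (i) arrange $[L_0,L_0]\subset L_0$ and (ii) invoke that in a semisimple Lie algebra $\mathrm{ad}$-nilpotence forces nilpotence in every faithful representation. Both routes are correct; the paper's is shorter and stays within the results proved in the notes, while yours is more intrinsic but imports a Lie-theoretic fact (essentially the abstract Jordan decomposition) that the notes do not establish.
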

\begin{rmk}
  This proves that for such a $\G$, any arithmetic subgroup in $\G(\Q)$ is 
  a lattice in $\G(\R)$. This fact readily follows from Theorem~\ref{thm:Borel-HC} 
  (which we will not proved during this course). 
\end{rmk}

Several improvements to this theorem are possible. First, it is possible to drop
the assumption that $\G$ is adjoint, and work with a more general semisimple
$\Q$-group. Moreover, the converse of the theorem also holds: if $\G(\Z)$ is a
uniform lattice in $\G(\R)$ then $\G(\Q)$ cannot contain any unipotent element.

One can show that for a semisimple $\G$ the absence of unipotent
elements in $\G(\Q)$ is equivalent to saying that there is no nontrivial
$\Q$-morphism $\Gm \to \G$ (such a morphism is called a $\Q$-cocharacter). 
In this case, one says that $\G$ is \emph{$\Q$-anisotropic} (in contrast to
$\Q$-isotropic). This last condition, equivalent to the absence of rational
unipotent elements in the semismple case, turns out to be the right condition to
state the general result. The following theorem was conjectured by Godement. Its
proof  is done  by reducing the problem to the statement of 
Theorem~\ref{thm:compactness-1} (and its converse). The proof appear for
instance in \cite{Bor69}.

\begin{theorem}[Godement's compactness criterion]
  \label{thm:Godement}
 Let $\G$ be a $\Q$-group without nontrivial $\Q$-character. Then the
 lattice $\G_\Z \subset \G(\R)$ is uniform if and only if $\G$ is $\Q$-anisotropic. 
\end{theorem}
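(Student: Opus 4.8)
The hypothesis that $\G$ has no nontrivial $\Q$-character already ensures, by the theorem of Borel and Harish-Chandra (Theorem~\ref{thm:Borel-HC}), that $\G_\Z$ is a lattice in $\G(\R)$; so the task is only to match uniformity of this lattice with $\Q$-anisotropy of $\G$. The plan is to reduce this equivalence, through successive structural simplifications, to the semisimple adjoint case established in Theorem~\ref{thm:compactness-1} and its converse. Two soft tools will be used repeatedly: uniformity of a lattice depends only on its commensurability class (Proposition~\ref{prop:commens-preserves-lattice}), and it is preserved in both directions under a continuous open surjection with compact kernel --- Proposition~\ref{prop:lattice-compact-kern} gives one direction, and the converse is immediate because the preimage of a compact set under such a map is compact. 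I will also grant two standard facts from reduction theory: under a $\Q$-isogeny the image of an arithmetic subgroup is again commensurable with an arithmetic subgroup, and $\Q$-anisotropy is an isogeny invariant for semisimple groups and for tori.

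First I would reduce to the reductive case. Let $\mathbf{N} = R_u(\G)$ be the unipotent radical and $\bG = \G/\mathbf{N}$ the reductive quotient. Since $\mathbf{N}$ is $\Q$-split unipotent it has vanishing Galois cohomology, so $\G(\R) \twoheadrightarrow \bG(\R)$; moreover $\mathbf{N}_\Z := \G_\Z \cap \mathbf{N}(\R)$ is a lattice in the simply connected nilpotent Lie group $\mathbf{N}(\R)$, and is automatically cocompact there. As $\mathbf{N}$ is normal, $\G_\Z$ normalizes $\mathbf{N}_\Z$, and I would show that $\mathbf{N}(\R)\,\G_\Z$ is closed in $\G(\R)$ and that $\G(\R)/\G_\Z$ is a fibre bundle over $\bG(\R)/\overline{\Gamma}$ --- with $\overline{\Gamma}$ the image of $\G_\Z$, an arithmetic subgroup of $\bG$ --- with fibre the compact nilmanifold $\mathbf{N}_\Z \backslash \mathbf{N}(\R)$; hence $\G_\Z$ is uniform if and only if $\overline{\Gamma}$, equivalently $\bG_\Z$, is uniform. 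On the other side, any $\Q$-split torus of $\G$ meets $\mathbf{N}$ trivially and therefore injects into $\bG$ onto a $\Q$-split torus of the same dimension, so $\G$ is $\Q$-anisotropic if and only if $\bG$ is. Thus one may assume $\G$ reductive.

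Next I would split off the central torus and pass to the adjoint group. Write $\mathbf{S} = Z(\G)^{\circ}$ for the maximal central torus and $\G' = [\G,\G]$ for the derived group, which is semisimple; the multiplication map $\mathbf{S} \times \G' \to \G$ is a $\Q$-isogeny. Since $\G$ has no nontrivial $\Q$-character, $\mathbf{S}$ is a $\Q$-anisotropic torus, and then $\mathbf{S}(\Z)$ is automatically cocompact in $\mathbf{S}(\R)$ (a form of Dirichlet's unit theorem --- this is the torus case of the statement). Likewise $\G'$, being semisimple, has no nontrivial $\Q$-character, so $\G'_\Z$ is a lattice in $\G'(\R)$, and $\mathbf{S}(\Z) \times \G'_\Z$ is uniform in $\mathbf{S}(\R) \times \G'(\R)$ exactly when $\G'_\Z$ is uniform; transporting along the isogeny, $\G_\Z$ is uniform if and only if $\G'_\Z$ is. Also every $\Q$-split torus of $\G$ lies in $\G'$ (because $\G/\G'$ is an anisotropic $\Q$-torus), so $\G$ is $\Q$-anisotropic if and only if $\G'$ is. Finally $\Ad \colon \G' \to \Ad(\G')$ is a $\Q$-isogeny onto a semisimple \emph{adjoint} group with $\Lie(\Ad(\G')) \cong \g'$; being an isogeny it preserves uniformity of the arithmetic lattice and $\Q$-anisotropy, so the whole statement comes down to: for $\Ad(\G')$ semisimple adjoint, $\Ad(\G')_\Z$ is uniform if and only if $\Ad(\G')$ is $\Q$-anisotropic. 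Since for a semisimple group $\Q$-anisotropy is, as recalled above, exactly the absence of nontrivial unipotent elements in the group of $\Q$-points, this is precisely Theorem~\ref{thm:compactness-1} together with its converse.

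The only genuinely hard ingredient is Theorem~\ref{thm:compactness-1} itself, which sits at the bottom of the chain. Among the reduction steps, the one I expect to require real care is the passage to the reductive case: it hinges on showing that $\G(\R)/\G_\Z$ is honestly a bundle over $\bG(\R)/\overline{\Gamma}$ with compact nilmanifold fibre --- that is, that $\mathbf{N}(\R)\,\G_\Z$ is closed and all fibres are homeomorphic to $\mathbf{N}_\Z \backslash \mathbf{N}(\R)$ --- which uses the normality of $\mathbf{N}_\Z$ in $\G_\Z$ and the rigidity of lattices in nilpotent Lie groups. The remaining bookkeeping (images of arithmetic subgroups under $\Q$-isogenies, isogeny-invariance of $\Q$-anisotropy, and the torus case) is classical and can be looked up in \cite{Bor69}.
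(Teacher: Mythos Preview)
The paper does not actually prove this theorem: it only asserts that ``its proof is done by reducing the problem to the statement of Theorem~\ref{thm:compactness-1} (and its converse)'' and refers to \cite{Bor69}. Your outline is precisely such a reduction, carried out in the standard way (mod out the unipotent radical, then the anisotropic central torus, then pass to the adjoint via $\Ad$), and the steps you list are correct and are exactly the ones found in Borel's treatment. So your proposal is consistent with --- and considerably more detailed than --- what the paper itself offers.

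One small point of phrasing: your claim that the converse of Proposition~\ref{prop:lattice-compact-kern} is ``immediate because the preimage of a compact set under such a map is compact'' is a little too quick as stated. The relevant map is $G/\Gamma \to H/\phi(\Gamma)$, whose fibres are homeomorphic to $K/(K\cap\Gamma)$ with $K=\ker\phi$; these are compact, and one should argue that this map is proper (or use a compact $\F' \subset H$ with $\F'\phi(\Gamma)=H$, pull it back, and enlarge by $K$). The conclusion is of course correct, but the justification needs that extra sentence.
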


Together with the theorem of Borel and Harish-Chandra, this result
completely solves the problem of determining when the arithmetic subgroups in
a $\Q$-group $\G$ are lattices or uniform lattices in $\G(\R)$.

\section{Arithmetic groups and representations}

\begin{lemma}
  \label{lemma:congr-stab-lattice}
  Let $\G$ be a $\Q$-group and $\V_\Q$ be a $\Q$-vector space with a
  $\Q$-representation $\G \to \GL(\V)$. Let $\Gamma \subset \G(\Q)$ be 
  an  arithmetic subgroup and $L \subset \V_\Q$ be a $\Z$-lattice. Then there
  exists a subgroup of finite index $\Gamma_0 \subset \Gamma$ that stabilizes
  $L$: for any $g \in \Gamma_0$ one has $g L = L$.
\end{lemma}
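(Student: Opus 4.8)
The plan is to reduce to the case where $\Gamma = \G_M$ is the full stabilizer of some $\Z$-lattice $M \subset \V_\Q$, and then exhibit an explicit finite-index subgroup stabilizing $L$. First I would recall that since $\Gamma$ is arithmetic, it is commensurable with $\G_M$ for some $\Z$-lattice $M$; replacing $\Gamma$ by $\Gamma \cap \G_M$ (finite index in $\Gamma$) reduces us to showing that $\G_M$ itself has a finite-index subgroup stabilizing $L$. So from now on I work with elements $g \in \G_M$, i.e.\ $g$ preserves the lattice $M$ in the space on which $\G$ is \emph{defined}, and I want finite-index many of these to preserve $L$ in the \emph{representation} space $\V_\Q$.

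The key observation is that the matrix entries of the action of $g$ on $\V_\Q$, with respect to a fixed $\Z$-basis of $L$, are given by \emph{fixed} polynomials (with rational coefficients) in the matrix entries of $g$, since $\G \to \GL(\V)$ is a $\Q$-morphism. Clearing denominators, there is a single positive integer $N$ such that: for every $g$ whose matrix entries (in the basis of $M$) are integers and whose inverse also has integer entries — which holds for all $g \in \G_M$ — the matrix of $g$ acting on $\V_\Q$ in the $L$-basis has entries in $\frac{1}{N}\Z$, and likewise for $g^{-1}$. In other words $g L \subset \frac{1}{N} L$ and $N L \subset g L$, so $g$ permutes the finitely many $\Z$-lattices sitting between $N L$ and $\frac{1}{N}L$; equivalently, $g$ preserves the finite abelian group $\frac{1}{N}L / NL$ up to its lattice structure, and the action of $\G_M$ on the finite set
\begin{align*}
  X = \left\{ L' \ \Z\text{-lattice in } \V_\Q \;|\; N L \subset L' \subset \tfrac{1}{N} L \right\}
\end{align*}
has $L$ as a point with a finite orbit and hence a finite-index stabilizer. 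I would then set $\Gamma_0$ to be the stabilizer of $L \in X$ intersected with $\Gamma$ (after the earlier reduction): it has finite index and stabilizes $L$ by construction.

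The technical heart — and the only place needing care — is producing the uniform integer $N$: one has to check that the polynomials expressing the $\V$-action entries in terms of the $M$-action entries, when applied to integer inputs, land in a bounded denominator, and the same for the inverse. This is immediate once one fixes bases and writes the $\Q$-representation explicitly as polynomial functions of the matrix coordinates (Remark~\ref{rmk:ring-polyn-concrete}), using that there are only finitely many such polynomials, each with a common denominator. Alternatively one can phrase it via Lemma~\ref{lemma:Z-lattices-commens}: it suffices to find \emph{one} $\Z$-lattice $L_1 \subset \V_\Q$ that is $\G_M$-stable (e.g.\ take the $\Z$-span of the $\G_M$-translates of a basis of $L$, shown to be finitely generated by the boundedness just discussed), and then $L$ and $L_1$ are commensurable, so a finite-index subgroup of $\G_M$ acting on $L_1$ also stabilizes $L$ — again exactly as in Proposition~\ref{prop:arithm-commensur}. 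Either way the argument is a direct transcription of the commensurability bookkeeping already developed for stabilizers of lattices in Section~3.3.
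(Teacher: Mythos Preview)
Your argument is correct and rests on the same core observation as the paper's: the matrix entries of $\phi(g)$ in a $\Z$-basis of $L$ are fixed polynomials with rational coefficients in the (integer) entries of $g$ in a $\Z$-basis of $M$, so a single integer clears all denominators. Where you diverge is in how you extract the finite-index subgroup. The paper first arranges $\Gamma \subset \GL_n(\Z)$ (by passing to a $\Gamma$-invariant lattice in the ambient space $\W_\Q$), then takes the principal congruence subgroup $\Gamma_0 = \{g \in \Gamma \mid g \equiv 1 \pmod m\}$ where $m$ is the common denominator, and checks directly that $\phi_{ij}(g) \equiv \phi_{ij}(e) \pmod \Z$ for $g \in \Gamma_0$, so $\phi(\Gamma_0) \subset \GL_s(\Z)$. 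You instead let $\G_M$ act on the finite set of lattices between $NL$ and $\tfrac{1}{N}L$ and take the stabilizer of $L$, recycling the mechanism of Proposition~\ref{prop:arithm-commensur}. The congruence-subgroup route is more explicit (it names $\Gamma_0$ concretely and explains the lemma's title), while yours avoids re-doing any computation at the identity and plugs straight into the commensurability bookkeeping already set up. One small point you gloss over, as does the paper: regular functions on $\GL(\W)$ may involve $\det^{-1}$, but since $g \in \G_M$ forces $\det(g) = \pm 1$ this causes no trouble.
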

\begin{proof}
       Let us assume that $\G \subset \GL(\W)$. For simplicity we will assume
       that $\G$ has no $\Q$-character, but the same proof works by realizing
       $\G$ as a closed subgroup of some $\SL(\W)$.

       By definition, there is a
       $\Z$-lattice $L' \subset \W_\Q$ such that $\Delta = \G_L \cap \Gamma$ has
       finite index in $\Gamma$. Let us write $\Gamma = g_1 \Delta \cup \dots
       \cup g_r \Delta$. Then the lattice $L'' = \sum_{i=1}^s g_i L'$ is a
       $\Z$-lattice in $\W_\Q$ invariant by $\Gamma$. By choosing a $\Z$-basis
       of $L''$, we obtain an embedding $\Gamma \subset \GL_n(\Z)$ (where $n =
       \dim(\W)$ and with the identification $\GL(\W_\Q) = \GL_n(\Q)$).

       Let us now consider a $\Q$-representation $\phi: \G \to \GL(\V)$, and $L
       \subset \V_\Q$ a $\Z$-lattice. By choosing a basis of $L$, 
       we can identify $\GL(\V_\Q) = \GL_s(\Q)$.
       Then $\phi$ can be written by matrix components $\phi = (\phi_{ij})$,
       where for $g \in \GL_n(\overline{\Q})$,  each $\phi_{ij}(g)$ is given by a  polynomial 
       in terms of $g_{11}, \dots, g_{nn}$ with coefficients in $\Q$
       (note that $\det(g) = 1$, since we assume that $\G$ has no $\Q$-character).
       Let $m$ be the least common multiple to
       all denominators in the coefficients of all the $\phi_{ij}$'s. Then the
       polynomials $f_{ij} = m \phi_{ij}$ have coefficients in $\Z$. 
       Let $\Gamma_0 \subset \Gamma$ be the congruence subgroup  
       \begin{align*}
               \Gamma_0 &= \left\{ g \in \Gamma \;|\; g \equiv 1 \mod m
               \right\},
       \end{align*}
       which has finite index in $\Gamma$. Now, for $g \in
       \Gamma_0$ we have $f_{ij}(g) - f_{ij}(e) \in m \Z$ for any $(i,j)$, so that 
       $\phi_{ij}(g) \in \Z$. It follows that $\phi(\Gamma) \subset \GL_s(\Z)$,
       which means that $\Gamma$ stabilizes the $\Z$-lattice $L$. 
\end{proof}

\begin{lemma}
        \label{lemma:arithm-stab-lattice}
        Let $\G \to \GL(\V)$ be a $\Q$-representation, and $\Gamma \subset
        \G(\Q)$ an arithmetic subgroup.
        There exists a $\Z$-lattice $L \subset \V_\Q$ stabilized by $\Gamma$.
\end{lemma}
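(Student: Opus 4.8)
The plan is to reduce Lemma~\ref{lemma:arithm-stab-lattice} to the previous Lemma~\ref{lemma:congr-stab-lattice}. First I would fix an arbitrary $\Z$-lattice $L_1 \subset \V_\Q$ (which exists, since any $\Q$-vector space admits one: take the $\Z$-span of a $\Q$-basis). Applying Lemma~\ref{lemma:congr-stab-lattice} to the $\Q$-representation $\G \to \GL(\V)$, the arithmetic subgroup $\Gamma$, and the lattice $L_1$, we obtain a finite-index subgroup $\Gamma_0 \subset \Gamma$ with $g L_1 = L_1$ for all $g \in \Gamma_0$.

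The remaining task is to pass from the $\Gamma_0$-invariant lattice $L_1$ to a $\Gamma$-invariant one. This is exactly the averaging trick already used inside the proof of Lemma~\ref{lemma:congr-stab-lattice} (the step producing $L'' = \sum g_i L'$). Concretely, write $\Gamma = h_1 \Gamma_0 \cup \dots \cup h_r \Gamma_0$ as a finite union of cosets, with $h_j \in \Gamma$, and set
\begin{align*}
  L &= \sum_{j=1}^{r} h_j L_1 \subset \V_\Q.
\end{align*}
One checks that $L$ is again a $\Z$-lattice: it is a finitely generated $\Z$-submodule of $\V_\Q$ (a finite sum of finitely generated submodules), it is torsion-free hence free, it contains $h_1 L_1$ and so spans $\V_\Q$ over $\Q$, and it has rank $n = \dim(\V)$ because it sits between $h_1 L_1$ and a suitable $\tfrac1m$-dilate (alternatively, any finitely generated $\Z$-submodule of $\V_\Q$ spanning $\V_\Q$ is automatically a $\Z$-lattice). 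For $\Gamma$-invariance, take $g \in \Gamma$; since $g h_j$ again lies in some coset $h_{\sigma(j)} \Gamma_0$, write $g h_j = h_{\sigma(j)} \gamma_j$ with $\gamma_j \in \Gamma_0$, so $g h_j L_1 = h_{\sigma(j)} \gamma_j L_1 = h_{\sigma(j)} L_1$ using $\gamma_j L_1 = L_1$. As $g$ permutes the cosets, $j \mapsto \sigma(j)$ is a bijection of $\{1,\dots,r\}$, hence $gL = \sum_j g h_j L_1 = \sum_j h_{\sigma(j)} L_1 = L$. This gives the desired $\Z$-lattice stabilized by all of $\Gamma$.

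I do not expect a genuine obstacle here: the lemma is essentially a corollary of Lemma~\ref{lemma:congr-stab-lattice}, and the only point requiring a little care is verifying that the finite sum $\sum_j h_j L_1$ is still a $\Z$-lattice (finitely generated, and of full rank $n$) rather than something larger or of the wrong rank — but this is a routine commensurability argument, since each $h_j L_1$ is commensurable with $L_1$ and a finite sum of lattices mutually commensurable is again a lattice.
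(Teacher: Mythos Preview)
Your proposal is correct and follows exactly the same approach as the paper: start from an arbitrary lattice $L_0$, invoke Lemma~\ref{lemma:congr-stab-lattice} to find a finite-index $\Gamma_0$ stabilizing it, and then average over coset representatives to produce $L = \sum_i g_i L_0$. The paper's proof is terser (it omits the verification that $L$ is a lattice and the detailed permutation argument for $\Gamma$-invariance), but the argument is the same.
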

\begin{proof}
        Let $L_0 \subset \V_\Q$ be any $\Z$-lattice, and (by the preceding
        lemma) let $\Gamma_0 \subset \Gamma$ a subgroup of finite index
        stabilizing $L_0$. For $\Gamma = g_1 \Gamma_0 \cup \dots \cup g_n
        \Gamma_0$, let us define $L = \sum_{i=1}^n g_i L_0$. Then $L$ is a
        $\Z$-lattice stabilized by $\Gamma$.
\end{proof}

\begin{rmk}
        To say that $\Gamma$ stabilizes a lattice is not the same as saying that
        $\Gamma$ is the full stabilizer of the lattice.
\end{rmk}

\begin{rmk}
        Note that in Lemma~\ref{lemma:arithm-stab-lattice}, 
        the lattice $L$ stabilized by $\Gamma$ can be chosen to contains a given
        $v \in \V_\Q$ (indeed, to contain a given lattice $L_0$). 
\end{rmk}

\begin{prop}
        Let $\phi: \G \to \H$ be a $\Q$-isomorphism of $\Q$-groups. 
        If $\Gamma \subset \G(\Q)$ is arithmetic, then $\phi(\Gamma)$ is
        arithmetic in $\H(\Q)$. 
\end{prop}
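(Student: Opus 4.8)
The plan is to use the embedding of $\G$ as a closed subgroup of some $\GL(\W)$ and Chevalley-type considerations to transfer the arithmetic structure across $\phi$. First I would fix $\Q$-embeddings $\G \subset \GL(\W)$ and $\H \subset \GL(\W')$, giving $\G(\Q) \subset \GL(\W_\Q)$ and similarly for $\H$. The key point is that both $\G(\Q)$ and $\H(\Q)$ carry a natural notion of ``integral matrices'' once $\Z$-lattices $M \subset \W_\Q$ and $M' \subset \W'_\Q$ are chosen, namely $\G_M$ and $\H_{M'}$; by Proposition~\ref{prop:arithm-commensur}, changing the lattice only changes $\G_M$ (resp.\ $\H_{M'}$) within its commensurability class, so it suffices to show that $\phi(\G_M)$ is commensurable with $\H_{M'}$ for \emph{some} choice of $M$ and $M'$, and then invoke that commensurability is an equivalence relation (Remark~\ref{rmk:commens-equivl-rel}) together with the fact that $\Gamma$ is commensurable with $\G_M$ by hypothesis.

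The core of the argument is an application of Lemma~\ref{lemma:arithm-stab-lattice} (via Lemma~\ref{lemma:congr-stab-lattice}) in both directions. The inclusion $\G \hookrightarrow \GL(\W)$ is a $\Q$-representation of $\G$; composing $\phi^{-1}: \H \to \G$ with it gives a $\Q$-representation $\psi: \H \to \GL(\W)$. Applying Lemma~\ref{lemma:arithm-stab-lattice} to $\psi$ and the arithmetic subgroup $\H_{M'} \subset \H(\Q)$, I obtain a $\Z$-lattice $M_1 \subset \W_\Q$ stabilized by $\psi(\H_{M'}) = \phi^{-1}(\H_{M'})$; hence $\phi^{-1}(\H_{M'}) \subset \G_{M_1}$. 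By Proposition~\ref{prop:arithm-commensur}, $\G_{M_1}$ is commensurable with $\G_M$, so $\phi^{-1}(\H_{M'})$ is contained in a group commensurable with $\G_M$. Running the symmetric argument with the representation $\G \to \GL(\W')$ given by $\phi$ followed by $\H \hookrightarrow \GL(\W')$, I get a $\Z$-lattice stabilized by $\phi(\G_M)$, so $\phi(\G_M)$ is contained in a group commensurable with $\H_{M'}$. Intersecting the two containments (pushed through the abstract group isomorphism $\G(\Q) \cong \H(\Q)$ induced by $\phi$) shows that $\phi(\G_M)$ and $\H_{M'}$ each have finite index over their intersection, i.e.\ they are commensurable.

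The main obstacle is bookkeeping rather than anything deep: one must be careful that Lemma~\ref{lemma:arithm-stab-lattice} only produces a lattice \emph{stabilized by} $\Gamma$, not the full stabilizer (as the Remark following it warns), so ``$\phi(\G_M)$ stabilizes $L$'' gives only $\phi(\G_M) \subset \GL(L) \cap \H(\Q) = \H_L$, which is an \emph{inclusion}, not equality — finite index in the other direction then has to come from the reverse application of the lemma plus Proposition~\ref{prop:arithm-commensur}. A secondary subtlety is the hypothesis in Lemma~\ref{lemma:congr-stab-lattice} that $\G$ (resp.\ $\H$) have no $\Q$-character, used there to guarantee $\det = 1$ on matrix entries; as that proof remarks, this is inessential — one realizes $\G$ inside some $\SL(\W)$ instead — so I would simply note that the same reasoning applies here without the character hypothesis. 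With these points handled, the chain of commensurabilities $\Gamma \sim \G_M \sim \phi^{-1}(\H_{M'}\text{-stabilized lattice's stabilizer}) $ pushed forward by $\phi$ yields that $\phi(\Gamma)$ is commensurable with $\H_{M'}$, hence arithmetic in $\H(\Q)$.
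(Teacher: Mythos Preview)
Your proposal is correct and follows essentially the same approach as the paper: apply Lemma~\ref{lemma:arithm-stab-lattice} once in each direction (via $\phi$ and via $\phi^{-1}$) to sandwich the relevant groups between lattice-stabilizers, then read off commensurability. The paper's version is slightly more streamlined: rather than first reducing to showing $\phi(\G_M)$ commensurable with $\H_{M'}$, it works with $\Gamma$ directly --- applying the lemma to $\Gamma$ (with the representation $\G \xrightarrow{\phi} \H \hookrightarrow \GL(\W')$) gives $\phi(\Gamma) \subset \H_L$, and then applying it to the arithmetic group $\H_L$ (with $\H \xrightarrow{\phi^{-1}} \G \hookrightarrow \GL(\W)$) yields the chain $\Gamma \subset \phi^{-1}(\H_L) \subset \G_{L'}$, whence $[\H_L : \phi(\Gamma)] = [\phi^{-1}(\H_L):\Gamma] \le [\G_{L'}:\Gamma] < \infty$ in one stroke, avoiding the intermediate commensurability bookkeeping you flagged.
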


\begin{proof}
        Suppose that $\Gamma \subset \G(\Q)$ is arithmetic.
        By Lemma~\ref{lemma:arithm-stab-lattice} the image $\phi(\Gamma)$
        stabilizes a lattice $L$, so that $\phi(\Gamma) \subset \H_L$. Applying
        the same reasoning with $\phi^{-1}$, there exists $L'$ such that 
        \begin{align*}
                \Gamma \subset \phi^{-1}(\H_L) \subset \G_{L'}.
        \end{align*}
        Then the index $[\H_L : \phi(\Gamma)] = [\phi^{-1}(\H_L): \Gamma]$ is
        bounded by $[\G_{L'}:\Gamma]$, and thus finite. This shows that
        $\phi(\Gamma)$ is arithmetic.
\end{proof}

Since $\phi$ induces an homeomorphism $\G(\R)/\Gamma \cong \H(\R)/\phi(\Gamma)$,
we obtain the following.

\begin{cor}
        \label{cor:isom-preserves-lattices}
        $\G_\Z \subset \G(\R)$ is a (uniform) lattice if and only $\H_\Z \subset
        \H(\R)$ is a (uniform) lattice. 
\end{cor}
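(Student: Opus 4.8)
The plan is to chain together three facts already in hand: that $\G_\Z$ is by definition an arithmetic subgroup, that the preceding proposition transports arithmeticity across the $\Q$-isomorphism $\phi$, and that commensurable subgroups of a locally compact group are simultaneously (uniform) lattices. Since the corollary is stated to follow from the remark just above it, the work is essentially a formal concatenation of earlier results, so I would keep the argument short.

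First I would note that $\G_\Z = \G_L$ for a suitable $\Z$-lattice $L \subset \V_\Q$, hence is an arithmetic subgroup of $\G(\Q)$ in the sense of Definition~\ref{def:arithm-sbgp}. By the preceding proposition, $\phi(\G_\Z)$ is then an arithmetic subgroup of $\H(\Q)$, so it is commensurable with $\H_M$ for some $\Z$-lattice $M$ in the space underlying $\H$; by Proposition~\ref{prop:arithm-commensur} this $\H_M$ is in turn commensurable with $\H_\Z$, and since commensurability is an equivalence relation (Remark~\ref{rmk:commens-equivl-rel}) I conclude that $\phi(\G_\Z)$ and $\H_\Z$ are commensurable subgroups of $\H(\Q) \subset \H(\R)$.

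Next I would use the homeomorphism $\G(\R)/\G_\Z \cong \H(\R)/\phi(\G_\Z)$ noted just above the corollary: since $\phi$ and $\phi^{-1}$ are $\Q$-morphisms of affine varieties, their component functions are regular and hence continuous for the Hausdorff topology, so $\phi$ restricts to a topological group isomorphism $\G(\R) \to \H(\R)$; this carries the discrete subgroup $\G_\Z$ onto a discrete subgroup $\phi(\G_\Z)$ and descends to a homeomorphism of the quotient spaces. Consequently $\G_\Z$ is a lattice in $\G(\R)$ if and only if $\phi(\G_\Z)$ is a lattice in $\H(\R)$, and the quotient on one side is compact exactly when it is on the other, so the ``uniform'' property is transported as well. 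Finally I would invoke Proposition~\ref{prop:commens-preserves-lattice}: since $\phi(\G_\Z)$ and $\H_\Z$ are commensurable in the locally compact group $\H(\R)$, one is a (uniform) lattice precisely when the other is. Combining the two equivalences gives the claim. The only point needing a little care — and the closest thing to an obstacle — is remembering that $\phi(\G_\Z)$ need not equal $\H_\Z$ on the nose, only be commensurable with it, which is exactly why Proposition~\ref{prop:commens-preserves-lattice} is required rather than a bare homeomorphism argument.
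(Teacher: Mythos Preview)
Your argument is correct and follows exactly the route the paper takes: the preceding proposition gives that $\phi(\G_\Z)$ is arithmetic in $\H(\Q)$, hence commensurable with $\H_\Z$, and the homeomorphism $\G(\R)/\G_\Z \cong \H(\R)/\phi(\G_\Z)$ together with Proposition~\ref{prop:commens-preserves-lattice} finishes the job. The paper compresses all of this into the single clause preceding the corollary, but your expanded version is precisely what is meant.
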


In particular, for  $\G$ semisimple and adjoint we can replace the study of
$\G_\Z$ by the study of $\Ad(\G)_\Z$ in $\Ad(\G)(\R)$.

\section{A proper embedding}

Let $X = \GL_n(\R)/\GL_n(\Z)$, that is, $X$ can be identified with the space of
of Euclidean lattices in $\R^n$ (see Chapter $1$). Using Mahler compactness
criterion, we can deduce in a straightforward way that $\SL_n(\R)/\SL_n(\Z)$ is
not compact: if it were, so would be its image in $X$, which is not the case (the
systole of lattices parametrized by $\SL_n(\R)$ is arbitrarily small). 

Let $\G \subset \GL(\V)$ be a $\Q$-group.
To prove Theorem~\ref{thm:compactness-1}, the idea is also to reduce the problem
to an application of Mahler's compactness criterion on the embedding
$\G(\R)/\G_L \hookrightarrow X$, where $X$ is identified with
$\GL(\V_\R)/\GL(L)$ for some $\Z$-lattice $L \subset \V_\Q$. But \emph{a priori}
compactness of the image in $X$ does not implies compactness of $\G(\R)/\G_L$:
we have to show that the embedding has good properties (namely, that it is
\emph{proper}). This is the goal of this section. 

Let us fix a basis of $L \subset \V_\Q$, so that $\GL(\V_\Q)$ is identified with
$\GL_n(\Q)$, and $\G_\Z = \G_L$.

\begin{lemma}
        \label{lemma:proper-embedding}
        Suppose that $\G$ has no nontrivial $\Q$-character.
        Then the product $\G(\R)\cdot\GL_n(\Z)$ is closed in $\GL_n(\R)$.
\end{lemma}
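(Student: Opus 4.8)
The plan is to show that $\G(\R)\cdot\GL_n(\Z)$ is closed by proving that it is saturated under a suitable compactness argument: the key algebraic fact is that the $\GL_n(\Z)$-orbit of any point outside $\G(\R)$ stays ``uniformly away'' from $\G(\R)$, and this uniformity comes from the hypothesis that $\G$ has no nontrivial $\Q$-character. Concretely, I would take a sequence $h_k \in \G(\R)\cdot\GL_n(\Z)$, write $h_k = g_k \gamma_k$ with $g_k \in \G(\R)$ and $\gamma_k \in \GL_n(\Z)$, and suppose $h_k \to h$ in $\GL_n(\R)$; the goal is to produce a decomposition $h = g\gamma$ with $g \in \G(\R)$, $\gamma \in \GL_n(\Z)$.

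**The main step via a $\G$-invariant polynomial.**
By Chevalley's theorem (Theorem~\ref{thm:Chevalley}), $\G$, viewed inside $\GL(\W)$ for $\W = \End(\V)$ with its left-translation action, is the stabilizer of a line $\W_k \subset \V'_k$ in some $\Q$-representation $\G \to \GL(\V')$. Since $\G$ has no nontrivial $\Q$-character, Corollary~\ref{cor:Chevalley} upgrades this to: there is a vector $v \in \V'_\Q$ whose stabilizer in $\GL(\V)$ (acting through the representation) is exactly $\G$ — more precisely $\G = \{x \in \GL(\V) : x\cdot v = v\}$. Rescaling, I may assume $v$ lies in a $\Z$-lattice $M \subset \V'_\Q$ which, by the remark after Lemma~\ref{lemma:arithm-stab-lattice}, is stabilized by $\GL_n(\Z)$ (enlarging the representation/lattice as needed so that $\GL_n(\Z)$ acts on $M$ through integral matrices). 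The orbit $\GL_n(\Z)\cdot v$ then lies in the discrete set $M$, and in particular is discrete and closed in $\V'_\R$. Now for $h_k = g_k\gamma_k \to h$: the vectors $h_k \cdot (\gamma_k^{-1}v) = g_k \cdot v = v$ are constant, so $h\cdot(\gamma_k^{-1}v) \to v$; since $h$ is invertible and $\gamma_k^{-1}v$ ranges over the discrete closed set $\GL_n(\Z)\cdot v$, the sequence $\gamma_k^{-1}v$ must be eventually constant, equal to some $w = \gamma^{-1}v$ with $\gamma \in \GL_n(\Z)$ (passing to a subsequence if needed, then noting the tail is constant). Thus $h\cdot w = v$, i.e. $(h\gamma)\cdot v = v$, so $h\gamma \in \G(\R)$ — wait, more carefully: $h\cdot w = v$ means $h$ sends $\gamma^{-1}v \mapsto v$, so $h\gamma^{-1}$ fixes $v$, giving $h\gamma^{-1} = g \in \G(\R)$ and hence $h = g\gamma$ with $g \in \G(\R)$, $\gamma \in \GL_n(\Z)$.

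**The anticipated obstacle.**
The delicate point is not the discreteness of $\GL_n(\Z)\cdot v$ but ensuring that the Chevalley vector $v$ can genuinely be arranged so that (i) its $\GL_n(\Z)$-orbit lies in a $\GL_n(\Z)$-stable $\Z$-lattice, and (ii) $\G$ is the \emph{full} $\GL(\V)$-stabilizer of $v$ — not just the $\G$-stabilizer. Point (ii) is what Chevalley/Corollary~\ref{cor:Chevalley} gives when we embed $\GL_n \hookrightarrow \GL(\V)$ itself and apply the theorem to the closed subgroup $\G \subset \GL(\V)$, producing a representation of the ambient $\GL(\V)$. Point (i) follows by applying Lemma~\ref{lemma:arithm-stab-lattice} to the arithmetic group $\GL_n(\Z) \subset \GL(\V)(\Q)$ and the representation $\GL(\V) \to \GL(\V')$, choosing the stabilized $\Z$-lattice $M$ to contain $v$. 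Once these are in place, the argument above closes: $\G(\R)\cdot\GL_n(\Z)$ contains all its limit points, hence is closed in $\GL_n(\R)$.
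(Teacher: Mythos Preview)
Your proof is correct and follows essentially the same route as the paper: apply Corollary~\ref{cor:Chevalley} to realize $\G$ as the full stabilizer in $\GL_n$ of some $v \in \V'_\Q$, use Lemma~\ref{lemma:arithm-stab-lattice} to place $v$ in a $\GL_n(\Z)$-stable $\Z$-lattice, and conclude by discreteness that $\gamma_k^{-1}v$ stabilizes. One small wrinkle: the implication ``$h_k\cdot(\gamma_k^{-1}v)=v$ so $h\cdot(\gamma_k^{-1}v)\to v$'' tacitly uses boundedness of $\gamma_k^{-1}v$; it is cleaner (and this is what the paper does) to write directly $\gamma_k^{-1}v = h_k^{-1}v \to h^{-1}v$ and then invoke discreteness.
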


\begin{proof}
     Let $g_n \cdot \gamma_n$ that converges to $h \in \GL_n(\R)$, with $g_n \in
     \G(\R)$ and $\gamma_n \in \GL_n(\Z)$. We have to show that $h \in \G(\R)
     \GL_n(\Z)$. By the corollary to Chevalley's theorem
     (Corollary~\ref{cor:Chevalley}), there exists a $\Q$-representation of
     $\GL_n$ on some $\V$ such that $\G$ is the stabilizer of some $v \in
     \V_\Q$. Then $g \in \GL_n(\R)$ belongs to $\G(\R)$ if and only if $g v =
     v$. It follows that $\gamma_n^{-1} g_n^{-1} v = \gamma_n^{-1}v$ converges
     to $h^{-1} v$ in $\V_\R$.

     Using Lemma~\ref{lemma:arithm-stab-lattice} we can choose a $\Z$-lattice $L
     \subset \V_\Q$ containing $v$ and stabilized by $\GL_n(\Z)$.
     Thus $\gamma_n^{-1} v \in L$ form a discrete set in $\V_\R$, proving that
     $\gamma_n^{-1} v = h^{-1} v$ for $n$ large enough. This implies that $h
     \gamma_n^{-1}$ stabilizes $v$ for $n$ large enough, so that $h \in \G(\R)
     \GL_n(\Z)$.
\end{proof}

\begin{prop}
        \label{prop:proper-embedding} 
        Suppose that $\G$ has no nontrivial $\Q$-character.
        The injection $\G(\R)/\G_\Z \to X$ is a homeomorphism onto a closed subset of $X$.
        In particular, $\G(\R)/\G_\Z$ is compact if and only if its image in $X$
        satisfies to the conditions of Mahler's compactness criterion.
\end{prop}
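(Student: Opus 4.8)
The plan is to study the natural injection $\bar\iota\colon\G(\R)/\G_\Z\to X$, $g\G_\Z\mapsto g\,\GL_n(\Z)$, and to show it is a \emph{proper} embedding with closed image; the ``in particular'' clause then follows at once from Mahler's criterion (Theorem~\ref{thm:Mahler}).

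First I would check that $\bar\iota$ is well defined and injective. Any matrix lying in $\G(\R)\cap\GL_n(\Z)$ has integer, hence rational, entries and lies in $\G$, so $\G(\R)\cap\GL_n(\Z)=\G(\Q)\cap\GL_n(\Z)=\G_\Z$; consequently $g\,\GL_n(\Z)=g'\,\GL_n(\Z)$ with $g,g'\in\G(\R)$ forces $g^{-1}g'\in\G_\Z$, so $\bar\iota$ is injective, and it is continuous, being induced by the inclusion $\G(\R)\hookrightarrow\GL_n(\R)$ followed by the projection $\pi\colon\GL_n(\R)\to X$. For the image, note $\pi^{-1}\big(\bar\iota(\G(\R)/\G_\Z)\big)=\pi^{-1}(\pi(\G(\R)))=\G(\R)\,\GL_n(\Z)$, which is closed in $\GL_n(\R)$ by Lemma~\ref{lemma:proper-embedding}; by definition of the quotient topology the image of $\bar\iota$ is therefore closed in $X$.

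The main work is to show $\bar\iota$ is proper, i.e.\ that $\bar\iota^{-1}(K)$ is compact for every compact $K\subset X$. Exactly as in the proof of Proposition~\ref{prop:cocompact-implies-lattice} (using that $\pi$ is open and $\GL_n(\R)$ locally compact) one produces a compact $\tilde K\subset\GL_n(\R)$ with $\pi(\tilde K)\supset K$, so that $\bar\iota^{-1}(K)$ is contained in $\pi_\G\big(\G(\R)\cap\tilde K\,\GL_n(\Z)\big)$, where $\pi_\G\colon\G(\R)\to\G(\R)/\G_\Z$ is the projection. Now, as in the proof of Lemma~\ref{lemma:proper-embedding}, I would use Corollary~\ref{cor:Chevalley} to realise $\G$ as the stabiliser of a vector $v\in\V_\Q$ for a suitable $\Q$-representation of $\GL_n$, and by Lemma~\ref{lemma:arithm-stab-lattice} choose a $\Z$-lattice $M\subset\V_\Q$ containing $v$ and stabilised by $\GL_n(\Z)$. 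If $g=k\gamma$ with $g\in\G(\R)$, $k\in\tilde K$, $\gamma\in\GL_n(\Z)$, then $gv=v$ gives $\gamma v=k^{-1}v\in M\cap\tilde K^{-1}v$; since $M$ is discrete in $\V_\R$ and $\tilde K^{-1}v$ is compact, this intersection is a finite set, say $\{w_1,\dots,w_r\}$. For each index $j$ that actually occurs, fix once and for all a matrix $\gamma_j\in\GL_n(\Z)$ with $\gamma_j v=w_j$; then $\gamma_j^{-1}\gamma$ fixes $v$ and has integer entries, so $\gamma_j^{-1}\gamma\in\G\cap\GL_n(\Z)=\G_\Z$, and hence $g\,(\gamma_j^{-1}\gamma)^{-1}=k\gamma_j$ lies in the \emph{fixed} compact set $C=\G(\R)\cap\bigcup_j\tilde K\gamma_j$ (a closed set intersected with a compact one, since $\G(\R)$ is closed in $\GL_n(\R)$). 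Thus $\G(\R)\cap\tilde K\,\GL_n(\Z)\subset C\,\G_\Z$, so $\bar\iota^{-1}(K)$ is a closed subset of the compact set $\pi_\G(C)$, hence compact.

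It remains to package this: a continuous proper map into a locally compact Hausdorff space (here $X$, Hausdorff by Corollary~\ref{cor:quotient-Hausdorff}) is closed, so $\bar\iota$ is a closed continuous injection and therefore a homeomorphism onto its image, which we saw is closed in $X$. Writing $Y=\bar\iota(\G(\R)/\G_\Z)$, we conclude that $\G(\R)/\G_\Z$ is compact iff the closed set $Y\subset X$ is compact, equivalently relatively compact, which by Theorem~\ref{thm:Mahler} holds iff $\covol$ and $\syst^{-1}$ are bounded on $Y$. I expect the properness step to be the main obstacle, and within it the bookkeeping that produces a single compact $C$ independent of $g$; the finiteness of $M\cap\tilde K^{-1}v$ comes from discreteness of $M$, the same mechanism already exploited in Lemma~\ref{lemma:proper-embedding}.
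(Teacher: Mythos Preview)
Your proof is correct, but the route to the homeomorphism statement differs from the paper's. The paper does not prove properness; instead it argues directly that $\bar\iota$ is a closed map: writing a closed $\G_\Z$-saturated subset as $S\G_\Z=\G(\R)\setminus U$, it claims $S\,\GL_n(\Z)=\G(\R)\GL_n(\Z)\setminus U\GL_n(\Z)$ and concludes closedness from Lemma~\ref{lemma:proper-embedding} alone. That argument is brief and uses nothing beyond the already-established closedness of $\G(\R)\GL_n(\Z)$, though the set identity and the relative openness of $U\GL_n(\Z)$ in $\G(\R)\GL_n(\Z)$ require some care to verify. Your approach instead re-invokes the Chevalley stabiliser description and the $\GL_n(\Z)$-stable lattice $M$---the same machinery underlying Lemma~\ref{lemma:proper-embedding}---to exhibit a compact $C$ with $\G(\R)\cap\tilde K\,\GL_n(\Z)\subset C\,\G_\Z$, giving properness explicitly. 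This is longer and repeats some work, but it is self-contained, makes the finiteness mechanism transparent, and yields the slightly stronger conclusion (properness) from which the closed-embedding statement follows by general topology.
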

\begin{proof}
       By definition of the quotient topology, the image $\G(\R)/\G_\Z$ in $X$
       is closed if and only if $\G(\R) \GL_n(\Z)$ is closed in $\GL_n(\R)$,
       which was proved in the preceding lemma. 
       To see that the injection is a homeomorphism we need to show that 
       if $S \G_\Z$ is closed in $\G(\R)$ then $S \GL_n(\Z)$ is closed in
       $\GL_n(\R)$ (passing to the quotient this shows the continuity of the
       inverse of the injection). But if $S \G_\Z$ is closed, we may write it as
       $\G(\R) \setminus U$ for some $U \subset \GL_n(\R)$ open. Then $S
       \GL_n(\Z)$ = $\G(\R) \GL_n(\Z) \setminus U \GL_n(\Z)$, which is closed.

       For the last assertion, denote by $A$ the image of $\G(\R)/\G_\Z$ in $X$.
       If $A$ satisfies the conditions of Mahler's criterion, then $A =
       \overline{A}$ is compact, and thus $\G(\R)/\G_\Z$ is compact (since the
       injection is a homeomorphism). 
\end{proof}

\section{Proof of the theorem}

Let $\G$ be semisimple and adjoint, so that $\G$ has no nontrivial
$\Q$-character and $\G$ is $\Q$-isomorphic to $\Ad(\G)$. Suppose that 
$\G \subset \GL(\V)$,  where $\V_\Q$ is a $\Q$-vector space of dimension $N$.
Let $L \subset \V_\Q$ be a $\Z$-lattice; fixing a basis of $L$ we have an
identification $\End(\V_\Q) = \Q^{N\times N}$, and $\GL(\V_\Q) = \GL_N(\Q)$.

Let $\g \subset \End(\V)$ be the Lie algebra of $\G$. Then $\g_\Q$ is a
$\Q$-subspace of $\End(\V_\Q)$, and it follows that the intersection $\g_\Z
= \End(L) \cap \g_\Q$ is a $\Z$-lattice in $\g_\Q$. By construction, $\g_\Z \subset
\Z^{N \times N}$. $\G$ acts on $\g$ via the adjoint representation: $\Ad(\G)
\subset \GL(\g)$; but note that since $\G$ has no nontrivial $\Q$-character, we
actually have $\Ad(\G) \subset \SL(\g)$. In particular, the adjoint
representation preserves the Euclidean volume on $\g_\R$.
We will denote by $\Ad(\G)_\Z$ the stabilizer of $\g_\Z$ in $\Ad(\G)(\Q)$. 

Suppose that $\G(\R)/\G_L$ is not compact. Since $\G \cong \Ad(\G)$, by
Corollary~\ref{cor:isom-preserves-lattices} this is the case exactly when
$\Ad(\G)(\R)/\Ad(\G)_\Z$ is not compact. To proves
Theorem~\ref{thm:compactness-1}, we need to show that in this case $\G(\Q)$
possesses a nontrivial unipotent element. By
Proposition~\ref{prop:nilp-implies-unip} it suffices to show that $\g_\Q$ has a
nonzero nilpotent element.

Now we apply Proposition~\ref{prop:proper-embedding}: if
$\Ad(\G)(\R)/\Ad(\G)_\Z$ is not compact, then the set of lattices in $\g_\R$
given by 
\begin{align*}
        A &= \left\{ \Ad(g) \cdot \g_\Z \;|\; g \in \G(\R) \right\}
\end{align*}
does not satisfy the conditions of Mahler's criterion. But all these lattices
have same covolume, so that the systole gets arbitrarily small in $A$. 
This means that there exists $g_n \in \G(\R)$ and $X_n \in \g_\Z \setminus
\left\{ 0 \right\}$ such that
$\Ad(g_n) X_n = g_n X_n g_n^{-1}$ converges to $0 \in \g_\R$.
But this implies that for $j = 1,\dots,N$ the trace $\tr(X_n^j)$ converges to
$0$. Since $X_n$ has coefficients in $\Z$ we have that for $n$ large enough
$\tr(X_n^j) = 0$. By Proposition~\ref{prop:nilpotents}, such an $X_n$ must be
nilpotent. This concludes the proof.

\chapter{Arithmetic groups over number fields}

The construction of a $\Q$-group $\G$ such that $\G(\Q)$ does not contain nontrivial
unipotent element is not so obvious. In order to consider such groups --
and thus produce uniform arithmetic lattices -- we will consider
arithmetic groups in connection with algebraic number theory. 

\section{Number fields}

We survey some features of algebraic number theory. We refer to
\cite{SteTall} or \cite{Neuk99} for an introduction to the subject.

\begin{definition}
We will call a subfield $k \subset \C$ a \emph{number field} if seen as
a $\Q$-vector space it has finite dimension: $\dim_\Q k < \infty$.
This dimension is called the \emph{degree} of $k$, and is denoted by
$[k:\Q]$.
\end{definition}

By choosing a basis, it is clear that any number field $k$ can be
written as $k = \Q(a_1,\dots, a_n)$ for some $a_i \in \C$. Here
$\Q(a_1,\dots, a_n)$ stands for the smallest field generated by the
$a_i$, and it can be concretely described as the rational expressions in
the $a_i$'s with coefficients in $\Q$. Moreover, in this description all
$a_i$ are algebraic numbers (otherwise the dimension would be infinite).
It will be useful to have the following -- stronger -- description at hand.

\begin{prop}
  If $k$ is a number field, then $k = \Q(a)$ for some algebraic number
  $a \in \overline{\Q}$.
\end{prop}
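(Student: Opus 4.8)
The plan is to prove this by induction on the number of generators, reducing everything to the two-generator case. As observed just before the statement, $k$ can be written as $\Q(a_1, \dots, a_n)$ with each $a_i$ an algebraic number, so it suffices to show that whenever $a, b \in \overline{\Q}$ the field $\Q(a,b)$ is generated over $\Q$ by a single element; iterating this collapses $\Q(a_1, \dots, a_n)$ down to $\Q(a)$ for one algebraic $a$. The degenerate cases are trivial: $\Q$ itself is $\Q(0)$, and a single algebraic number already gives the base case.

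So fix $a, b \in \overline{\Q}$ and let $f, g \in \Q[X]$ be their minimal polynomials. Since $\mathrm{char}(\Q) = 0$, the polynomial $f$ has $\deg f$ \emph{distinct} roots $a = a_1, \dots, a_r$ in $\overline{\Q}$, and similarly $g$ has distinct roots $b = b_1, \dots, b_s$. I would look for a primitive element of the form $c = a + t b$ with $t \in \Q$, chosen to avoid finitely many bad values: for each $i$ and each $j \geq 2$ there is at most one $t$ solving $a_i + t b_j = a_1 + t b_1$, namely $t = (a_i - a_1)/(b_1 - b_j)$ (which makes sense because $b_1 \neq b_j$). Since $\Q$ is infinite we may pick $t$ outside this finite set. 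Then $\Q(c) \subseteq \Q(a,b)$ is clear, and the content of the argument is the reverse inclusion.

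For that I would show $b \in \Q(c)$ by a gcd computation. Consider $g(X)$ and $h(X) := f(c - t X)$, both in $\Q(c)[X]$. They share the root $b$, since $h(b) = f(c - t b) = f(a) = 0$. On the other hand, for $j \geq 2$ one has $h(b_j) = f\bigl(a + t(b - b_j)\bigr) \neq 0$: if it vanished we would get $a + t(b - b_j) = a_i$ for some $i$, forcing $t = (a_i - a_1)/(b_1 - b_j)$, which was excluded. Hence in $\overline{\Q}[X]$ the polynomial $\gcd(g,h)$ has $b$ as its only root, and since $g$ is separable this forces $\gcd(g,h) = X - b$ up to a constant. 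But the gcd of two polynomials is unchanged under field extension, so $\gcd(g,h)$ already lies in $\Q(c)[X]$; therefore $X - b \in \Q(c)[X]$, giving $b \in \Q(c)$ and then $a = c - t b \in \Q(c)$. Thus $\Q(a,b) = \Q(c)$ and the induction goes through.

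The only real subtlety is the bookkeeping around separability: the finiteness of the set of bad $t$'s and the identification $\gcd(g,h) = X - b$ both rely on $f$ and $g$ having no repeated roots. In characteristic zero this is automatic (an irreducible polynomial of positive degree is coprime to its derivative), so there is no genuine obstacle here, only the need to state things carefully; everything else is the standard Euclidean-algorithm argument for the primitive element theorem.
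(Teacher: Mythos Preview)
Your argument is correct: this is the classical proof of the primitive element theorem, and the details (choice of $t$ avoiding finitely many collisions, the gcd computation showing $X-b\in\Q(c)[X]$, the appeal to separability in characteristic zero) are all handled properly.

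The paper, however, does not actually prove this proposition. It is stated without proof as part of a survey of background facts from algebraic number theory (the chapter refers to \cite{SteTall} and \cite{Neuk99} for this material), and is then used freely, for instance in the proof that a degree-$d$ number field has exactly $d$ monomorphisms into $\C$. So there is nothing to compare against: you have supplied a complete standard proof where the paper simply quotes the result.
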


Recall that an element $a \in \overline{\Q}$ is an algebraic \emph{integer} 
if it is the zero of a \emph{monic}\footnote{that is, with leading coefficient $=1$.}
polynomial with coefficient in $\Z$. It is a (nontrivial) fact that the set
$\mathbf{A} \subset \overline{\Q}$ 
algebraic integers forms a subring. For $k$ a number field, we denote by $\O = k
\cap \mathbf{A}$ its \emph{ring of algebraic integers}.

\begin{example}
        For $k = \Q$, one has $\O = \Z$. Trivially, this is the only number field of degree $1$.
\end{example}

\begin{theorem}
        \label{thm:integers-order}
        The ring of integer $\O$ of a number field $k$ is a (free) $\Z$-module 
        of rank $d = [k:\Q]$. 
\end{theorem}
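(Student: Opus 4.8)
The ring of integers $\O$ of a number field $k$ is a free $\Z$-module of rank $d = [k:\Q]$.

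The plan is to sandwich $\O$ between two free $\Z$-modules of rank $d$ using the trace form, and then invoke the structure theorem for finitely generated modules over a PID. First I would recall that since $\mathrm{char}(k) = 0$, the field extension $k/\Q$ is separable, so the trace pairing $\langle x, y \rangle = \mathrm{Tr}_{k/\Q}(xy)$ is a nondegenerate symmetric $\Q$-bilinear form on $k$; this is the one place where a nontrivial input is needed, and it is standard. Next, starting from any $\Q$-basis $a_1, \dots, a_d$ of $k$, I would clear denominators: each $a_i$ is algebraic over $\Q$, so some integer multiple $m_i a_i$ is an algebraic integer (multiply the minimal polynomial's coefficients through to make it monic with integer coefficients, after scaling the root). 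Replacing $a_i$ by $m_i a_i$, we may assume $a_1, \dots, a_d \in \O$ form a $\Q$-basis of $k$. Then $M := \Z a_1 \oplus \cdots \oplus \Z a_d$ is a free $\Z$-module of rank $d$ contained in $\O$.

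Now I would produce an upper bound. Let $a_1^*, \dots, a_d^*$ be the dual basis of $k$ with respect to the trace form, i.e. $\mathrm{Tr}_{k/\Q}(a_i a_j^*) = \delta_{ij}$, and set $M^* = \Z a_1^* \oplus \cdots \oplus \Z a_d^*$. The claim is $M \subset \O \subset M^*$. For the right inclusion: take $x \in \O$ and write $x = \sum_j c_j a_j^*$ with $c_j \in \Q$; then $c_i = \mathrm{Tr}_{k/\Q}(x a_i)$. Since $x a_i \in \O$ (as $\O$ is a ring), and the trace of an algebraic integer is a rational algebraic integer, hence lies in $\Z$, we get $c_i \in \Z$, so $x \in M^*$. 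Thus $\O$ is a $\Z$-submodule of the free rank-$d$ module $M^*$.

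Finally I would conclude: a submodule of a free module of rank $d$ over the PID $\Z$ is itself free of rank $\le d$; and since $\O$ contains $M$, which already has rank $d$, the rank of $\O$ is exactly $d$. Hence $\O$ is free of rank $d = [k:\Q]$. The only genuine obstacle is establishing that $\mathrm{Tr}_{k/\Q}$ maps $\O$ into $\Z$ and that the trace form is nondegenerate; both follow from separability of $k/\Q$ together with the fact that the trace of $x$ is (up to sign) a coefficient of the characteristic polynomial of multiplication by $x$, which has integer coefficients when $x \in \O$ (the characteristic polynomial is a power of the minimal polynomial, which is monic with integer coefficients for an algebraic integer). Everything else is routine linear algebra over $\Z$.
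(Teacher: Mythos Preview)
Your proof is correct and is the standard argument: sandwich $\O$ between a free $\Z$-module $M$ of rank $d$ (obtained by clearing denominators in a $\Q$-basis) and its trace-dual $M^*$, then apply the structure theorem over the PID $\Z$. The only substantive inputs --- that the trace form is nondegenerate (separability of $k/\Q$) and that $\mathrm{Tr}_{k/\Q}$ sends $\O$ into $\Z$ --- you identify and justify correctly.

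There is nothing to compare against: the paper states this theorem without proof, merely citing Stewart--Tall and Neukirch for background in algebraic number theory. Your argument is in fact the proof one finds in those references (see for instance \cite[Theorem~2.16]{SteTall} or \cite[Ch.~I, Prop.~2.10]{Neuk99}), so you have supplied exactly what the paper chose to omit.
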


\begin{rmk}
        Such a subring $\O \subset k$ (containing $1$) that is a $\Z$-lattice of rank $d$ is
        called an \emph{order} of $k$ (and this explain the notation). It turns
        out that the ring of integers is the maximal order in $k$ (it contains
        all other orders of~$k$).
\end{rmk}

A number field $k$ is called \emph{quadratic} if $[k :\Q] = 2$. Those are the
fields of the form $k = \Q(\sqrt{m})$, where  $m \in \Q^\times$ is not a square.

\begin{example}
        For $k = \Q(\sqrt{2})$, one has $\O = \Z[\sqrt{2}]$.      
\end{example}

\begin{example}
        For $k = \Q(i)$ with $i^2 = -1$, one has $\O = \Z[i]$ (which is called
        the ring of \emph{Gaussian integers}).
\end{example}

\begin{example}
        For $k = \Q(\sqrt{5})$, one has $\O = \Z[\frac{1+\sqrt{5}}{2}]$.      
\end{example}

\vspace{.3cm}

By a \emph{monomorphism} $\sigma: k \to \C$ we mean a homomorphism of
ring, sending $1 \mapsto 1$. In particular $\sigma$ must be injective, and for
$x \in \Q$ we have $\sigma(x) = x$.
As a trivial example, there is always the embedding $k \subset \C$ given by the
identity map.  If $f(t) \in \Q[t]$ is a polynomial and $a \in k$, then
$f(\sigma(a)) = f(a)$. 

\begin{example}
        In the example of a quadratic field $k = \Q(\sqrt{m})$, beside the
        identity map given  $\sqrt{m}  \mapsto \sqrt{m}$, there is the
        homomorphism determined by $\sqrt{m} \mapsto -\sqrt{m}$.
\end{example}

\begin{prop}
  Let $k$ be a number field of degree $d$. There exist exactly $d$
  distinct monomorphisms $k \to \C$.
\end{prop}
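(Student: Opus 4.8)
The plan is to use the primitive element description furnished by the preceding proposition: write $k = \Q(a)$ for some $a \in \overline{\Q}$, and let $p(t) \in \Q[t]$ be the minimal polynomial of $a$ over $\Q$. Since $[k:\Q] = d$, the polynomial $p$ has degree $d$, and there is a ring isomorphism $\Q[t]/(p(t)) \cong k$ sending $t \mapsto a$. Everything will be reduced to the correspondence between ring homomorphisms out of $\Q[t]/(p(t))$ and roots of $p$ in $\C$.

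First I would observe that any monomorphism $\sigma : k \to \C$ restricts to the identity on $\Q$ and is completely determined by the single value $\sigma(a)$, because $k$ is generated by $a$ over $\Q$ and $\sigma$ respects addition, multiplication and inverses. Moreover, applying $\sigma$ to the relation $p(a) = 0$ gives $p(\sigma(a)) = 0$, so $\sigma(a)$ must be one of the roots of $p$ in $\C$. This already bounds the number of monomorphisms by the number of distinct complex roots of $p$, hence by $d$.

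Next I would show the bound is attained. Because $\mathrm{char}(\Q) = 0$ (one of the standing assumptions), the field $\Q$ is perfect and the irreducible polynomial $p$ is separable, so $p$ has exactly $d$ distinct roots $a_1, \dots, a_d \in \C$. For each root $a_i$, the evaluation homomorphism $\Q[t] \to \C$, $t \mapsto a_i$, kills the ideal $(p(t))$ and therefore factors through $\Q[t]/(p(t)) \cong k$, producing a monomorphism $\sigma_i : k \to \C$ with $\sigma_i(a) = a_i$. These $d$ maps are pairwise distinct since they take distinct values at $a$, and by the previous paragraph every monomorphism arises in this way. Hence there are exactly $d$.

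The only point genuinely requiring care is the separability of $p$ — that it has $d$ distinct roots rather than fewer roots counted with multiplicity; this is precisely where the characteristic-zero hypothesis enters, via the fact that $\gcd(p, p') = 1$ for an irreducible polynomial $p$ over a perfect field (so $p$ and $p'$ share no root). The remaining ingredients — the primitive element proposition, the identification $k \cong \Q[t]/(p(t))$, and the bijection between $\C$-algebra maps out of this quotient and roots of $p$ — are standard and routine.
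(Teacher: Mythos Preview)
Your proof is correct and follows essentially the same approach as the paper: write $k = \Q(a)$, observe that monomorphisms are determined by sending $a$ to a root of its minimal polynomial, and use that this polynomial has $d$ distinct roots. The paper only sketches this idea and defers the details to a reference, whereas you carry out the argument in full (including the separability point via characteristic zero), but the route is the same.
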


\begin{proof}
        Let $k = \Q(a)$, and let $f \in \Q[t] $ be the minimal polynomial of
        $a$. The idea is that each monomorphism $\sigma : k \to \C$ is
        determined by $a \mapsto a'$, where $a'$ is a \emph{conjugate} of $a$,
        that is, a zero of $f$. It turns out that $\deg(f) = d$, and all its
        zeros are distinct.  See \cite[Theorem 2.4]{SteTall} for details.
\end{proof}

We will say that the monomorphism $\sigma$ is \emph{real} if $\sigma(k) \subset
\R$, and \emph{complex} otherwise. If $\sigma$ is complex, then its
complex conjugate $\overline{\sigma}$ is another monomorphism, so that complex
monomorphisms always come in pairs. If $r_1$ denotes the number of real
monomorphisms, then $d = r_1 + 2r_2$ where $r_2$ is the number of pairs
of complex monomorphisms. We call $(r_1, r_2)$ the {\em signature} of $k$.

\begin{example}
        Let $k = \Q(\sqrt[3]{2})$, which is a \emph{cubic} field (i.e., $[k:\Q] =
        3$). Then $k \subset \R$, and the nontrivial monomorphisms are given by
        $\sigma: \sqrt[3]{2} \mapsto e^{2\pi i/3} \sqrt[3]{2}$ and its complex
        conjugate $\overline{\sigma}$. Thus, $k$ has signature $(1,1)$.
\end{example}

\section{Number fields and discrete groups}

The $\Z$-module $\O$ can be seen as a (Euclidean) lattice in a suitable vector
space. The construction is the following. For each monomorphism $\sigma: k \to
\C$ we set  $k_\sigma = \R$ if $\sigma$ is real, and $k_\sigma = \C$ if $\sigma$
is complex. Then we define 
\begin{align}
        k_\R = \prod_\sigma k_\sigma,
        \label{eq:k_R}
\end{align}
where the product runs over each real monomorphism and over one
representative
in each pair of complex monomorphism. Then, as a real vector space $k_\R$ has
same dimension as the degree $[k:\Q]$. 

\begin{example} \ 
        \begin{itemize}
                \item For $k = \Q(\sqrt{2})$, we have $k_\R = \R \times \R$.
                \item For $k = \Q(i)$, we have $k_\R = \C$.
                \item For $k = \Q(\sqrt[3]{2})$, we have $k_\R = \R \times \C$.
        \end{itemize}
\end{example}

In the example $k = \Q(i)$, we readily see that $\O = \Z[i]$ forms a Euclidean
lattice in $\C \cong \R^2$. This generalizes to the generic situation.
For this, we consider the natural injection map $k \to k_\R$ given by $x \mapsto
(\sigma(x))_\sigma$.  For example for $k = \Q(\sqrt{2})$: $a + b \sqrt{2} \mapsto
(a + b \sqrt{2}, a - b \sqrt{2})$, where $a, b \in \Q$.

\begin{prop}
  The $\Z$-module $\O$ is discrete in $k_\R$.
\end{prop}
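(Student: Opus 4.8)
The plan is to reduce this statement to the fact, established in Chapter~1 (see Lemma~\ref{lemma:L-discrete}, or more precisely the discreteness of any Euclidean lattice), that a lattice of full rank in a finite-dimensional real vector space is discrete. So the essential work is to show that the image of $\O$ under the embedding $\iota : k \to k_\R$, $x \mapsto (\sigma(x))_\sigma$, is a Euclidean lattice in $k_\R$. Since $\O$ is a free $\Z$-module of rank $d = [k:\Q]$ by Theorem~\ref{thm:integers-order}, and $\dim_\R k_\R = d$ as noted above, it suffices to check that $\iota(\O)$ spans $k_\R$ as a real vector space; equivalently, that $\iota$ maps a $\Z$-basis of $\O$ to an $\R$-basis of $k_\R$.

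First I would fix a $\Z$-basis $\omega_1, \dots, \omega_d$ of $\O$; by Theorem~\ref{thm:integers-order} this is also a $\Q$-basis of $k$. Because $\iota$ is $\Q$-linear and injective, $\iota(\omega_1), \dots, \iota(\omega_d)$ are $\Q$-linearly independent vectors in $k_\R$. The point is to upgrade $\Q$-independence to $\R$-independence. For this I would consider the ``full'' embedding $j : k \to \C^d$, $x \mapsto (\sigma_1(x), \dots, \sigma_d(x))$, indexed over \emph{all} $d$ monomorphisms $\sigma_i : k \to \C$ (which exist and are distinct by the proposition on monomorphisms above), and recall that $k_\R$ sits inside $\C^d$ as the real subspace cut out by the conditions $z_{\overline\sigma} = \overline{z_\sigma}$ on each complex-conjugate pair, with $j = \iota$ after identifying $k_\R$ with that subspace. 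The matrix $M = (\sigma_i(\omega_\ell))_{i,\ell}$ is exactly the matrix whose determinant squared is the discriminant $\mathrm{disc}(\omega_1,\dots,\omega_d)$ of the basis, and this is a standard nonzero invariant of a number field; hence $M$ is invertible over $\C$, so $j(\omega_1),\dots,j(\omega_d)$ are $\C$-linearly independent in $\C^d$. Restricting to the real subspace $k_\R$ and counting real dimensions ($d$ vectors, real span inside a $d$-dimensional real space, and they remain $\R$-linearly independent since $\C$-independent vectors lying in a real subspace are $\R$-independent there), we conclude $\iota(\omega_1),\dots,\iota(\omega_d)$ form an $\R$-basis of $k_\R$.

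Therefore $\iota(\O) = \Z\,\iota(\omega_1) \oplus \cdots \oplus \Z\,\iota(\omega_d)$ is a Euclidean lattice in $k_\R \cong \R^d$ in the sense of Definition~\ref{def:eucl-lattice}, and hence discrete by Lemma~\ref{lemma:L-discrete}. The main obstacle — really the only nontrivial input — is the nonvanishing of the discriminant, i.e. the invertibility of the matrix $(\sigma_i(\omega_\ell))$; this is precisely the separability/non-degeneracy of the trace form on $\O$, a standard fact of algebraic number theory (and in characteristic zero it follows from the primitive element theorem together with the distinctness of the conjugates of a generator of $k$, as in the cited Theorem~2.4 of \cite{SteTall}). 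If one wishes to avoid invoking the discriminant explicitly, an alternative is to argue directly: write $k = \Q(a)$, use the Vandermonde determinant $\prod_{i<j}(\sigma_i(a) - \sigma_j(a)) \neq 0$ (the conjugates being distinct) to see that $1, j(a), \dots, j(a)^{d-1}$ are $\C$-independent in $\C^d$, hence $\R$-independent in $k_\R$, and then pass from the power basis $1, a, \dots, a^{d-1}$ of $k$ to the $\Z$-basis $\omega_\ell$ by an invertible rational change of basis. Either route gives the result.
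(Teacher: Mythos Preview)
The paper states this proposition without proof (it simply records the result and moves on), so there is no argument in the text to compare against. Your proof is correct and is essentially the standard one: identify $\iota(\O)$ as a full-rank $\Z$-lattice in $k_\R \cong \R^d$ by showing that a $\Z$-basis $\omega_1,\dots,\omega_d$ of $\O$ maps to an $\R$-basis of $k_\R$, then invoke Lemma~\ref{lemma:L-discrete}. The identification of $k_\R$ with the real subspace $\{z_{\overline\sigma}=\overline{z_\sigma}\}\subset\C^d$ and the use of the nonvanishing discriminant (or equivalently the Vandermonde argument on a primitive element) are exactly the right ingredients, and your observation that $\C$-independence trivially implies $\R$-independence closes the argument cleanly.
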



In some sense this generalizes the discreteness of $\Z$ in $\R$. We can
take advantage of the fact that $\O$ is a subring to construct discrete
groups in locally compact groups. First we mention the following
example.

\begin{example}
  The group $\SL_n(\Z[i])$ is a discrete subgroup of $\SL_n(\C)$.
  This follows directly from the discreteness of $\O = \Z[i]$ in $\C$.
\end{example}

We would like to generalize this construction for an arbitrarily
$k$-group $\G$, with $k$ a number field. To simplify the notation we
will restrict to the case where $k$ is \emph{totally real}, that is, all
monomorphisms of $k$ are real (e.g., $k = \Q(\sqrt{2})$).
It will be convenient to see $\G$ as a $k$-closed
subgroup $\G \subset \GL(\V_\C)$, and in turn identify $\GL(\V_\C) =
\GL_N(\C)$ (by choosing a basis of $\V_k$).
Being $k$-closed, $\G$ can be written as
\begin{align}
  \left\{ g \in \GL_N(\C) \;|\; f(g_{11},\dots, g_{nn}) = 0 \mbox{ for all }
f \in S \right\},
  \label{eq:G-eq}
\end{align}
for some set of polynomials $S \subset k[T_{11},\dots, T_{nn}]$. For any
monomorphism $\sigma: k \to \R$, we define the group $\G^\sigma$ to be   
\begin{align}
  \left\{ g \in \GL_N(\C) \;|\; (^\sigma f)(g_{11},\dots, g_{nn}) = 0 \mbox{ for all }
f \in S \right\},
  \label{eq:G-eq-sigma}
\end{align}
where the polynomial $^\sigma f$ is obtained from $f$ by applying $\sigma$ on each of
its coefficients. Thus $\G^\sigma$ is a $\sigma(k)$-closed subset, which is
actually a group (this is not completely obvious in general, but will be on the
examples we will consider). We then
define $\G(k_\R)$ as the product
\begin{align}
  \G(k_\R) &= \prod_\sigma \G^\sigma(\R),
  \label{eq:}
\end{align}
where the product runs over each monomorphism $\sigma: k \to \R$.

\begin{rmk}
  For $k$ not totally real the construction has to be modified as
  follows: the product takes one representative in the pair $(\sigma,
  \overline{\sigma})$, and for $\sigma$ complex we consider $\G^\sigma(\C)$
  instead of $\G^\sigma(\R)$.
\end{rmk}

\begin{example}
  Consider $\G = \SL_2$ as a $k$-group, for $k = \Q(\sqrt{2})$. Then
  $\G$ is determined by the single polynomial $f = \det - 1$, which
  actually has coefficients in $\Q$. It follows that $^\sigma f = f$ for
  both $\sigma : k \to \R$. Thus, $\G(k_\R) = \SL_2(\R) \times
  \SL_2(\R)$.
\end{example}

Let us fix a $\O$-lattice $L \subset \V_k$, that is, $L$ is a free $\O$-submodule
of rank $= N = \dim(\V_k)$.  Then the stabilizer $\G_L \subset \G(k)$
identifies (by choosing a basis of $L$) as the intersection $\G_L =
\G(k) \cap \GL_N(\O)$, where the latter denotes matrices with
coefficients in $\O$ and determinant in $\O^\times$ (this identifies with the
group of automorphisms of $L$). This justifies the notation $\G_\O = \G_L$.

\begin{prop}
  The natural inclusion $k \subset k_\R$ induces an inclusion $\G(k) \subset
  \G(k_\R)$, for which $\G_\O$ becomes a discrete subgroup of $\G(k_\R)$.
\end{prop}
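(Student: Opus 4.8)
The plan is to reduce the statement to the discreteness of $\O$ in $k_\R$ established in the preceding proposition. First I would make the map $\G(k) \to \G(k_\R)$ explicit. The inclusion $k \subset k_\R$ is the diagonal embedding $x \mapsto (\sigma(x))_\sigma$, with $\sigma$ ranging over the monomorphisms of $k$ (all real, since $k$ is totally real); applied entrywise to a matrix $g \in \GL_N(k)$ it yields the tuple $(\sigma(g))_\sigma$, where $\sigma(g)$ denotes the matrix obtained by applying $\sigma$ to each entry of $g$. Since each $\sigma$ is an injective ring homomorphism and matrix multiplication, inversion and $\det$ are polynomial in the entries, $g \mapsto (\sigma(g))_\sigma$ is an injective group homomorphism $\GL_N(k) \to \prod_\sigma \GL_N(\R)$. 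If moreover $g \in \G(k)$, so that $f(g) = 0$ for every $f$ in the set $S$ of polynomials defining $\G$, then $({}^\sigma f)(\sigma(g)) = \sigma(f(g)) = 0$, hence $\sigma(g) \in \G^\sigma(\R)$; therefore the homomorphism restricts to an injection $\G(k) \hookrightarrow \prod_\sigma \G^\sigma(\R) = \G(k_\R)$, as wanted. Injectivity is in any case immediate, because the given inclusion $k \subset \R$ occurs among the $\sigma$.

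Next I would prove discreteness. By the preceding proposition, the image of $\O$ under $x \mapsto (\sigma(x))_\sigma$ is discrete in $k_\R$ — in fact a $\Z$-lattice of full rank, since $\rank_\Z \O = \dim_\R k_\R = [k:\Q]$. A finite product of discrete subgroups being discrete, the image of $\mathrm{Mat}(N\times N,\O)$ in $\mathrm{Mat}(N\times N, k_\R) \cong (k_\R)^{N^2}$ is discrete. Now $\GL_N(\O)$ is a subset of $\mathrm{Mat}(N\times N,\O)$, so its image is discrete in $\mathrm{Mat}(N\times N, k_\R)$, hence also discrete as a subset of the subspace $\prod_\sigma \GL_N(\R)$, and then of the further subspace $\G(k_\R) = \prod_\sigma \G^\sigma(\R)$.

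To conclude: after fixing a basis of the $\O$-lattice $L$, the group $\G_\O = \G_L$ is identified with $\G(k) \cap \GL_N(\O)$, a subgroup of $\GL_N(\O)$. Under the embedding $\G(k) \subset \G(k_\R)$ constructed above, the image of $\G_\O$ sits inside the image of $\GL_N(\O)$, which is discrete in $\G(k_\R)$; a subset of a discrete subset is discrete, so $\G_\O$ is a discrete subgroup of $\G(k_\R)$ (it is a genuine subgroup, being the intersection of two subgroups).

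I do not expect a real obstacle here: the whole argument rests on the discreteness of $\O$ in $k_\R$ together with the elementary fact that a ring monomorphism $\sigma$ commutes with the polynomial operations cutting out $\G$. The only point deserving a moment's care is the topological bookkeeping — that discreteness in $\mathrm{Mat}(N\times N, k_\R)$ is inherited by the subspace $\G(k_\R)$ — which amounts to the one-line observation that if $U\cap D = \{d\}$ with $U$ open in $X$ and $D \subset S \subset X$, then $(U\cap S)\cap D = \{d\}$ with $U\cap S$ open in $S$.
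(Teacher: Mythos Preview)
Your proof is correct and follows the same line as the paper: make the inclusion explicit via $g \mapsto (\sigma(g))_\sigma$, check $\sigma(g) \in \G^\sigma(\R)$ because $({}^\sigma f)(\sigma(g)) = \sigma(f(g)) = 0$, and deduce discreteness of $\G_\O$ from the discreteness of $\O$ in $k_\R$ by viewing $\G(k_\R)$ inside matrices with entries in $k_\R$. The paper states this more tersely, while you spell out the topological bookkeeping (passing from $\mathrm{Mat}(N\times N,k_\R)$ to the subspace $\G(k_\R)$); that extra care is harmless and arguably clearer.
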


\begin{proof}
        For $g \in \G(k) \subset \GL_n(k)$, we set $\sigma(g) =
        (\sigma(g_{ij}))_{i,j}$ (apply $\sigma$ on each matrix component). Then
        one checks that $\sigma(g) \in \G^\sigma(\R)$. The inclusion $\G(k)
        \subset \G(k_\R)$ is given by $g \mapsto (\sigma(g))_\sigma$. Since the
        group $\G(k_\R)$ can be seen as a set of matrices with coefficients in
        $k_\R$, the discreteness of $\G_\O \subset \GL_N(\O)$ follows directly
        from the fact that $\O$ is discrete in $k_\R$.
\end{proof}

\begin{example}
  $\Gamma = \SL_2(\Z[\sqrt{2}])$ is discrete in $G = \SL_2(\R) \times \SL_2(\R)$. 
  An obvious discrete subgroup of $G$ would be $\SL_2(\Z) \times
  \SL_2(\Z)$. The group $\Gamma$ is a less trivial construction, in the
  sense that it has dense image in both factors of $G$. One says that
  $\Gamma$ is \emph{irreducible} in $G$.
\end{example}

\begin{definition}
        \label{def:arithm-groups-k}
        A subgroup $\Gamma \subset \G(k)$ that is commensurable with $\G_\O$
        will be called an {\em arithmetic subgroup}, and $k$ is called its {\em field
        of definition}.
\end{definition}

\section{Restriction of scalars}


Let $k \subset \C$ be a number field and $\V_k$ be a vector space of dimension
$n$. We denote by $\V_\Q'$ the set $\V_k$ seen as $\Q$-vector space. If $d =
[k:\Q]$ denotes the degree of $k$, we have that $\V_\Q'$ has dimension $= nd$.
Moreover, if $L \subset \V_k$ is a $\O$-lattice, then we can see it as a
$\Z$-module of $\V'_\Q$, of rank $nd$. We will denote $L'$ this $\Z$-module.

Suppose that $\G \subset \GL(\V)$ is an algebraic $k$-group. There exist a
$\Q$-group $\G' \subset \GL(\V') \cong \GL_{nd}(\overline{\Q})$ for which there
exist the following canonical identifications:
\begin{itemize}
        \item $\G'(\Q) =  \G(k)$;
        \item $\G'(\R) = \G(k_\R)$;
        \item $\G'_{L'} = \G_L$.
\end{itemize}

The group $\G'$ is said to be obtained from $\G$ by \emph{restriction of the
scalars}. In some sense this operation is opposite to the extension of scalars.
The last property above shows the arithmetic subgroup $\G_L = \G_\O$ (defined
over $k$) corresponds to a arithmetic group $\G'_\Z \subset \G'(\Q)$.  This
shows that Definition~\ref{def:arithm-groups-k}, which in appearance generalizes
the notion of arithmetic groups over $\Q$, actually does not produce more
groups but give an alternative description to the construction we already had at
hand.

\begin{rmk}
       The assignment $\G \mapsto \G'$ is a functor from the category of the
       $k$-groups to the category of $\Q$-groups. It is know as \emph{Weil's
       restriction of scalars}, and denoted by $\G' = \mathrm{Res}_{k/\Q}(\G)$. 
\end{rmk}

We refer to \cite[Ch.3 \S  61]{OniVinb-I} and \cite[\S 7.16]{Bor69} for more
information about restriction of scalars.

\begin{example}
  Let $\G = \Gm/ k$ (i.e., seen as a $k$-group) for $k = \Q(\sqrt{m})$
  quadratic with $m > 0$ (so that $k$ is totally real).
  For the following algebraic $\Q$-group:
  \begin{align*}
          \G' &=
  \left\{ 
    \left(
    \begin{array}[h]{cc}
            g_{11} & g_{12}  \\
            g_{21} & g_{22} 
    \end{array}\right) \in \GL_2(\overline{\Q})\;\Big|\; g_{12} = g_{22};\; g_{12} = m g_{21}
      \right\},
  \end{align*}
  we have an isomorphism $\G(k) \cong \G'(\Q)$ given by 
  \begin{align*}
  a + b \sqrt{m} \mapsto   
        \left(
    \begin{array}[h]{cc}
            a & m b  \\
            b & a
    \end{array}\right).
  \end{align*}
  And one checks that the map $\left(  \begin{smallmatrix}
  a & m b \\ b & a
  \end{smallmatrix}\right) \mapsto (a + b \sqrt{m}, a - b \sqrt{m})$
  provides an isomorphism $\G'(\R) \cong \R^\times \times \R^\times = \G(k_\R)$.
  (Note that for $m < 0$ the latter is replaced by $\C^\times$.) If $\O =
  \Z[\sqrt{m}]$ (e.g., for $k = \Q(\sqrt{2})$), then the isomorphism $\G(k)
  \cong \G'(\Q)$ induces an isomorphism $\O^\times = \G_\O \cong \G'_\Z$.
\end{example}

\begin{example}
        For $\G = \SL_n$ over $k = \Q(\sqrt{2})$, the $\Q$-group $\G' \subset
        \GL_4$ obtained by restriction of scalars would then be such that
        $\G'(\Q) = \SL_2(\Q)$ (canonical isomorphism) and $\G'(k_\R) = \SL_2(\R)
        \times \SL_2(\R)$.
\end{example}

Using the property $\G'_{L'}  = \G_L$ permits to reduces question about
arithmetic subgroups in $\G(k)$ to the study of ``classical'' (over $\Q$)
arithmetic subgroups. As a first example, let us mention:

\begin{prop}
       Let $L$ and $M$ be two $\O$-lattices in $\V_k$, and $\G \subset \GL(\V)$
       be a $k$-group. Then $\G_L$ is commensurable to $\G_M$ in $\G(k)$.
\end{prop}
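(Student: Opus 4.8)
The plan is to reduce the statement to its already-established analogue over $\Q$, via Weil's restriction of scalars. Let $d = [k:\Q]$, let $\V'_\Q$ be the underlying $\Q$-vector space of $\V_k$ (of dimension $nd$, where $n = \dim \V_k$), and let $\G' = \mathrm{Res}_{k/\Q}(\G)$ be the $\Q$-group constructed in the previous section, for which we have the canonical identifications $\G'(\Q) = \G(k)$ and $\G'_{N'} = \G_N$ for any $\O$-lattice $N \subset \V_k$ with associated $\Z$-lattice $N' \subset \V'_\Q$. First I would observe that an $\O$-lattice $L \subset \V_k$, being a free $\O$-module of rank $n$, is in particular a free $\Z$-module of rank $nd$ spanning $\V'_\Q$ over $\Q$; so $L'$ and $M'$ are two $\Z$-lattices in the same $\Q$-vector space $\V'_\Q$.

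Now I would apply the results already proved over $\Q$. By the example following Lemma~\ref{lemma:Z-lattices-commens}, $L'$ and $M'$ are commensurable as abelian groups; and then Proposition~\ref{prop:arithm-commensur}, applied to the $\Q$-group $\G'$ and the pair $L', M' \subset \V'_\Q$, shows that $\G'_{L'}$ and $\G'_{M'}$ are commensurable in $\G'(\Q)$. Translating back through the identifications $\G'(\Q) = \G(k)$, $\G'_{L'} = \G_L$, $\G'_{M'} = \G_M$ gives exactly that $\G_L$ and $\G_M$ are commensurable in $\G(k)$.

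Alternatively, one can give a self-contained proof that simply transcribes the arguments of Lemma~\ref{lemma:Z-lattices-commens} and Proposition~\ref{prop:arithm-commensur} with $\Z$ replaced by $\O$: clear the denominators of the change-of-$\O$-basis matrix to find $0 \neq \alpha \in \O$ with $\alpha L \subset M \subset \alpha^{-1} L$; note that $\G_{\alpha L} = \G_L$ for any $\alpha \in k^\times$; let $\G_L$ act on the set of $\O$-lattices $N$ with $\alpha L \subset N \subset \alpha^{-1} L$, identifying $\G_L/(\G_L \cap \G_M)$ with the orbit of $M$; and observe that this set is finite because $\alpha^{-1} L / \alpha L$ is a finite $\O$-module. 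The symmetric argument bounds $[\G_M : \G_L \cap \G_M]$ as well.

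I do not expect a serious obstacle: the substantive work has already been carried out over $\Q$, and the restriction-of-scalars dictionary does the rest. The only point requiring a little care is to run the $\Q$-results on the $\Z$-lattices $L', M'$ (not the $\O$-lattices $L, M$) and to remember that the commensurability being asserted lives inside $\G(k) = \G'(\Q)$.
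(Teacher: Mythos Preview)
Your proposal is correct and follows exactly the paper's approach: reduce to the $\Q$-case via restriction of scalars, using the identifications $\G'_{L'}=\G_L$, $\G'_{M'}=\G_M$ and then invoking Proposition~\ref{prop:arithm-commensur}. The alternative direct argument you sketch (transcribing the $\Q$-proofs over $\O$) is also fine but not what the paper does.
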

\begin{proof}
        We have $\G_L = \G'_{L'}$, which by
        Proposition~\ref{prop:arithm-commensur} is commensurable to $\G'_{M'} =
        \G_M$.
\end{proof}

We will also need the following properties
(without proof):
\begin{itemize}
        \item If $\G$ is simple, then $\G'$ is semisimple;
        \item an element  $g \in \G(k)$ is unipotent if and only if it is
                unipotent as an element in $\G'(\Q)$
\end{itemize}

\section{Arithmetic lattice over number fields}

From the properties of the restriction of scalars, the following is immediate
from Theorem~\ref{thm:compactness-1}. As mentioned in
Section~\ref{sec:compact-criterion}, the assumption ``adjoint'' can be remove
without affecting the result.

\begin{theorem}
  \label{thm:uniform-over-nb-field}
  Let $\G$ be a simple adjoint $k$-group, for $k$ a number field.
  If $\G(k)$ has no unipotent element, then $\G(k_\R)/\G_\O$ is compact. 
\end{theorem}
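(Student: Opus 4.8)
The plan is to reduce everything to the $\Q$-rational statement of Theorem~\ref{thm:compactness-1}, in the form valid for an arbitrary semisimple $\Q$-group (as announced in Section~\ref{sec:compact-criterion}), by means of Weil's restriction of scalars. Fix an $\O$-lattice $L \subset \V_k$ with $\G_\O = \G_L$, and set $\G' = \mathrm{Res}_{k/\Q}(\G)$, with $L' \subset \V_\Q'$ the $\Z$-module obtained from $L$. By the properties of restriction of scalars recalled in the previous section, there are canonical identifications $\G'(\Q) = \G(k)$, $\G'(\R) = \G(k_\R)$ and $\G'_\Z = \G'_{L'} = \G_L = \G_\O$; moreover, since $\G$ is simple, the group $\G'$ is semisimple over $\Q$.

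Next I would transfer the hypothesis across this identification. Again by the stated properties of $\mathrm{Res}_{k/\Q}$, an element $g \in \G(k)$ is unipotent if and only if it is unipotent when viewed in $\G'(\Q)$. Hence the assumption that $\G(k)$ contains no nontrivial unipotent element says precisely that $\G'(\Q)$ contains no nontrivial unipotent element.

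With this in hand I would invoke the compactness criterion. The group $\G'$ is semisimple, and by the improvement of Theorem~\ref{thm:compactness-1} noted in Section~\ref{sec:compact-criterion} the adjointness hypothesis may be dropped; since $\G'(\Q)$ has no nontrivial unipotent element, the quotient $\G'(\R)/\G'_\Z$ is compact. Translating back through the identifications above gives that $\G(k_\R)/\G_\O = \G'(\R)/\G'_\Z$ is compact, which is the claim. Finally, the statement is independent of the chosen $\O$-lattice $L$: any other $\O$-lattice $M$ yields $\G_M$ commensurable with $\G_L$, and being a uniform lattice is preserved under commensurability (Proposition~\ref{prop:commens-preserves-lattice}), so compactness of $\G(k_\R)/\G_\O$ does not depend on the choice.

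The one point that needs care is exactly which version of Theorem~\ref{thm:compactness-1} is being used: $\G'$ need not be adjoint even when $\G$ is, unless one first verifies that restriction of scalars takes the center of $\G$ to the center of $\G'$ (so that a trivial center is preserved), which we have not done. Thus the argument genuinely rests on the adjoint-free form of the compactness criterion. Everything else — the three canonical identifications and the characterization of unipotent elements under $\mathrm{Res}_{k/\Q}$ — is used here as a black box, exactly as recorded in the previous section.
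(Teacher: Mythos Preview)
Your argument is correct and follows exactly the route the paper takes: reduce to Theorem~\ref{thm:compactness-1} via restriction of scalars, using the canonical identifications $\G'(\Q)=\G(k)$, $\G'(\R)=\G(k_\R)$, $\G'_\Z=\G_\O$, the preservation of unipotency, and the semisimplicity of $\G'$, together with the adjoint-free form of the criterion noted in Section~\ref{sec:compact-criterion}. Your added remarks on the adjointness of $\G'$ and on independence of the $\O$-lattice are reasonable elaborations but not needed beyond what the paper states.
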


\begin{rmk}
        Similarly one can formulate the theorem of Borel and Harish-Chandra, and
        Godement's compactness criterion for $k$-groups. 
        There ``$\Q$-character'' and ``$\Q$-anisotropic'' must be replaced by
        ``$k$-character'' and ``$k$-anisotropic''.
\end{rmk}

\begin{lemma}
        Let $\G$ be an $\R$-group and suppose that $\G(\R)$ is compact.
        Then $\G(\R)$ has no nontrivial unipotent element. 
        \label{lemma:compact-no-unipot}
\end{lemma}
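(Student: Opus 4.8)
The plan is to derive a contradiction from the fact that a compact subset of $\End(\V_\R)$ is bounded, while the powers of a nontrivial unipotent element grow without bound. Suppose $u \in \G(\R)$ is unipotent with $u \neq 1$, and set $X = u - 1 \in \End(\V_\R)$. By definition $X$ is nilpotent, and since $u \neq 1$ there is an integer $p \ge 2$ with $X^{p-1} \neq 0$ and $X^p = 0$. First I would record the elementary fact that $1, X, \dots, X^{p-1}$ are linearly independent in $\End(\V_\R)$: if $\sum_j c_j X^j = 0$ with not all $c_j$ zero and $j_0$ is the least index with $c_{j_0} \neq 0$, multiplying by $X^{p-1-j_0}$ kills all higher terms and yields $c_{j_0} X^{p-1} = 0$, a contradiction.

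Next, for every $n \in \N$ the power $u^n$ lies in $\G(\R)$, since $\G(\R)$ is a group. Because $X^p = 0$, the binomial theorem gives the finite sum
\begin{align*}
  u^n = (1 + X)^n = \sum_{j=0}^{p-1} \binom{n}{j} X^j .
\end{align*}
By the linear independence just established and the equivalence of norms on the finite-dimensional subspace spanned by $1, X, \dots, X^{p-1}$, there is a constant $c > 0$ with $\norm{u^n} \ge c \binom{n}{p-1}$ for all $n$. Since $p - 1 \ge 1$ we have $\binom{n}{p-1} \to \infty$, hence $\norm{u^n} \to \infty$ as $n \to \infty$.

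Finally I would invoke compactness: $\G(\R) \subset \GL(\V_\R) \subset \End(\V_\R)$, and a compact subset of the finite-dimensional real vector space $\End(\V_\R) \cong \R^{N \times N}$ is bounded. This contradicts $\norm{u^n} \to \infty$ with all $u^n \in \G(\R)$, so no nontrivial unipotent element can exist in $\G(\R)$.

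The only mildly delicate point is the norm estimate $\norm{u^n} \ge c \binom{n}{p-1}$, which rests entirely on the linear independence of $1, X, \dots, X^{p-1}$; everything else is immediate. As an alternative one could argue that every element of a compact subgroup of $\GL_N(\R)$ is semisimple with all eigenvalues of modulus $1$, so a unipotent element of $\G(\R)$ would be simultaneously unipotent and semisimple and hence equal to $1$; but the norm-growth argument is more self-contained and avoids appealing to that structural fact.
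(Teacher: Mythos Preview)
Your proof is correct. The strategy coincides with the paper's: show that the powers $u^n$ of a nontrivial unipotent element are unbounded in $\End(\V_\R)$, contradicting compactness of $\G(\R)$. The technical implementation differs, however. The paper passes to $\GL_n(\C)$ and uses the Jordan normal form to conjugate $g$ into an upper-triangular matrix with a $1$ on the superdiagonal; then the $(1,2)$ entry of $h g^n h^{-1}$ is exactly $n$, so the conjugated group is unbounded and hence so is $\G(\R)$. You instead stay inside $\End(\V_\R)$, expand $(1+X)^n$ by the binomial theorem, and extract the growth of the top coefficient $\binom{n}{p-1}$ via the linear independence of $1,X,\dots,X^{p-1}$. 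Your route is a bit more self-contained (no appeal to Jordan form, no passage to $\C$), while the paper's route is more concrete (one literally sees the entry $n$). Either way the argument is short and the conclusion is the same.
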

\begin{proof}
        Suppose that  $g \in \G(\R) \subset \GL_n(\C)$ is unipotent and nontrivial. 
        By the Jordan normal form
        theorem, there exists $h \in \GL_n(\C)$ such that 
        \begin{align*}
                h g h^{-1} &= \left(  \begin{array}[h]{ccc} 
                        1 & 1 & * \\
                        0 & 1 & * \\
                        \vdots & 0 & \ddots
                \end{array} \right).
        \end{align*}
        It follows that for $x = h g^n h^{-1}$, we have $x_{12} = n$. Thus the
        group $h \G(\R) h^{-1}$ is not bounded in $\GL_n(\C)$. Since conjugation
        by $h$ gives an homeomorphism, we deduces that $\G(\R)$ is not compact. 
\end{proof}

\begin{cor}
        Let $\G$ be a simple (adjoint) $k$-group, for $k$ a totally real number field.
          If there is $\sigma$ such that $\G^\sigma(\R)$ is compact, then
          $\G_\O$ uniform lattice in $\G(k_\R)$. 
\end{cor}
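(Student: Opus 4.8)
The plan is to deduce this from Theorem~\ref{thm:uniform-over-nb-field} by verifying its hypothesis, namely that $\G(k)$ contains no nontrivial unipotent element. Granting this, the theorem gives that $\G(k_\R)/\G_\O$ is compact; since $\G_\O$ is discrete in $\G(k_\R)$ (shown above) and the quotient is compact, Proposition~\ref{prop:cocompact-implies-lattice} then shows that $\G_\O$ is a lattice, and it is uniform by definition. So the whole matter reduces to ruling out rational unipotents, and this is exactly where the compactness of the factor $\G^\sigma(\R)$ enters.

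Suppose then, for contradiction, that $g \in \G(k) \subset \GL_N(k)$ is unipotent with $g \neq 1$. First I would apply the monomorphism $\sigma$ entrywise to $g$: by the computation already used to set up the inclusion $\G(k) \subset \G(k_\R)$ --- applying $\sigma$ to the defining equations \eqref{eq:G-eq} turns them into those of \eqref{eq:G-eq-sigma} --- the resulting matrix $\sigma(g)$ lies in $\G^\sigma(\R)$. Next, since $\sigma$ is a ring homomorphism it commutes with matrix addition and multiplication, so from $(g-1)^N = 0$ one gets $(\sigma(g)-1)^N = \sigma\bigl((g-1)^N\bigr) = 0$; hence $\sigma(g)$ is again unipotent.

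Now the hypothesis is used: $\G^\sigma(\R)$ is compact, so by Lemma~\ref{lemma:compact-no-unipot} it contains no nontrivial unipotent element, forcing $\sigma(g) = 1$. But $\sigma : k \to \R$ is injective, hence so is the induced map on $\GL_N$, and therefore $g = 1$, contradicting the choice of $g$. This establishes that $\G(k)$ has no nontrivial unipotent element, and the corollary follows as explained in the first paragraph. I do not expect a genuine obstacle here; the one point deserving a moment's care is confirming that $\sigma(g)$ really lands in $\G^\sigma(\R)$ and not merely in $\GL_N(\R)$, but this is immediate from the definition~\eqref{eq:G-eq-sigma} of $\G^\sigma$.
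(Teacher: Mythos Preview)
Your proof is correct and follows essentially the same approach as the paper: verify the hypothesis of Theorem~\ref{thm:uniform-over-nb-field} by using the embedding $g \mapsto \sigma(g)$ into the compact group $\G^\sigma(\R)$ together with Lemma~\ref{lemma:compact-no-unipot}. The paper's version is more terse (it simply writes $\G(k) \subset \G^\sigma(\R)$ and invokes the lemma), whereas you spell out explicitly why $\sigma(g)$ lands in $\G^\sigma(\R)$, why it remains unipotent, and why $\sigma(g)=1$ forces $g=1$ --- all welcome clarifications.
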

\begin{proof}
    Since $\G(k) \subset \G^\sigma(\R)$ and the latter is compact, by the
    previous lemma we have that $\G(k)$ has no nontrivial unipotent element.
\end{proof}

\begin{cor}
  If for each $\sigma \neq \mathrm{id}$ we have that $\G^\sigma(\R)$ compact,
  then $\G_\O$ is a uniform lattice in $\G(\R)$.
\end{cor}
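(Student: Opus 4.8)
The plan is to deduce this from the preceding corollary together with Proposition~\ref{prop:lattice-compact-kern}. Assuming $[k:\Q]\ge 2$ (so that the hypothesis is not vacuous), there is at least one $\sigma\neq\mathrm{id}$ with $\G^\sigma(\R)$ compact, so the preceding corollary already gives that $\G_\O$ is a uniform lattice in $\G(k_\R)=\prod_\sigma\G^\sigma(\R)$. It then remains to pass from $\G(k_\R)$ to the single factor $\G(\R)$ by killing the compact factors.

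Concretely, I would write $\G(k_\R)=\G(\R)\times C$, where $\G(\R)=\G^{\mathrm{id}}(\R)$ is the factor attached to the identity embedding and $C=\prod_{\sigma\neq\mathrm{id}}\G^\sigma(\R)$. By hypothesis every $\G^\sigma(\R)$ with $\sigma\neq\mathrm{id}$ is compact, hence $C$ is compact. Let $\pi:\G(k_\R)\to\G(\R)$ be the projection onto the first factor: it is a continuous, open, surjective homomorphism whose kernel is exactly $C$, which is compact. Thus Proposition~\ref{prop:lattice-compact-kern} applies (to the uniform case) and shows that $\pi(\G_\O)$ is a uniform lattice in $\G(\R)$.

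Finally I would identify $\pi(\G_\O)$ with $\G_\O$ sitting inside $\G(\R)$. Under the embedding $\G(k)\hookrightarrow\G(k_\R)$, $g\mapsto(\sigma(g))_\sigma$, the $\mathrm{id}$-component is simply $g\mapsto g$; since $k$ is totally real we have $\O\subset\R$, so $\G_\O\subset\GL_N(\O)\subset\G(\R)$, and $\pi$ carries the image of $\G_\O$ in $\G(k_\R)$ onto this copy of $\G_\O$ in $\G(\R)$. Moreover $\pi$ restricted to that image is injective, because the $\mathrm{id}$-coordinate already recovers $g$; so the resulting bijection is a homeomorphism onto its image, and $\G_\O$ is a uniform lattice in $\G(\R)$, as claimed.

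I do not expect a genuine obstacle here: the corollary is essentially a repackaging of the preceding corollary and Proposition~\ref{prop:lattice-compact-kern}. The only points needing care are checking the hypotheses of Proposition~\ref{prop:lattice-compact-kern} (openness of a product projection is routine, and compactness of its kernel is precisely the standing hypothesis) and the bookkeeping that identifies $\pi(\G_\O)$ with $\G_\O$ viewed inside $\G(\R)$, which is where the assumption that $k$ is totally real is used.
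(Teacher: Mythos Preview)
Your proof is correct and follows essentially the same route as the paper: write $\G(k_\R)=\G(\R)\times\prod_{\sigma\neq\mathrm{id}}\G^\sigma(\R)$, note that the projection onto the first factor is a continuous open surjection with compact kernel, and apply Proposition~\ref{prop:lattice-compact-kern} to the uniform lattice $\G_\O\subset\G(k_\R)$ produced by the preceding corollary. Your added bookkeeping (the identification of $\pi(\G_\O)$ with $\G_\O\subset\G(\R)$ via the $\mathrm{id}$-coordinate, and the remark that one needs $[k:\Q]\ge 2$ for the preceding corollary to apply) is more careful than the paper, which leaves these points implicit.
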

\begin{proof}
        We have $$\G(k_\R) = \G(\R) \times \prod_{\sigma \neq \mathrm{id}}
        \G^\sigma(\R),$$
        and thus projection $\phi: \G(k_\R) \to \G(\R)$ onto the first factor
        is a continuous open surjective homomorphism with compact kernel. It follows from
        Proposition~\ref{prop:lattice-compact-kern} that the
        image of $\phi(\G_\O)  = \G_\O \subset \G(\R)$ is a uniform lattice.
\end{proof}
\begin{rmk}
  Indeed we have that $\G_\O$ is an arithmetic lattice in the sense of
  Definition~\ref{def:arithm-lattice}.
\end{rmk}

\begin{example}

        Let $k = \Q(\sqrt{2})$ and $\V_k$ be a $k$-vector space of dimension $n
        +1$. Let $f$ be the quadratic form on $\V_k$ given (for some basis of
        $\V_k$): 
        \begin{align*}
                f(x) &= -\sqrt{2} x_0^2 + x_1^2 + \dots + x_n^2.
        \end{align*}
        The subgroup $\G = \SO(\V, f) \subset \SL(\V)$ of transformations that
        preserves $f$ is a $k$-group. This can be seen by writing $\G$ as the
        set of matrices such that $^t g A g = A$, where $A$ is the diagonal
        matrix $A = (-\sqrt{2}, 1, \dots, 1)$. 

        For the nontrivial monomorphism $\sigma: \sqrt{2} \mapsto - \sqrt{2}$,
        we have that $\G^\sigma = \SO(\V, ^\sigma\!\! f)$, for $^\sigma f(x) =
        \sqrt{2} x_0^2 + x_1^2 + \dots x_n^2$. In particular, $\G^\sigma(\R) =
        \SO(\V_\R, f)$ is compact (and isomorphic to the ``standard'' special
        orthogonal group $\SO(n+1)$).  

        It turns out that for $n \ge 2$ (i.e., $\dim(\V) \ge 3$) the 
        algebraic group  $\G = \SO(\V, f)$ is simple, 
        and adjoint if $n$ is even. By the corollary, we have that $\G(\O)$ is a
        uniform lattice in $\G(\R) = \SO(\V_\R, f) \cong \SO(1, n)$ (the special
        orthogonal group preserving the standard Lorentzian quadratic form).
\end{example}


\begin{rmk}
        By letting the number field $k$ and/or the quadratic form $f$ vary, we
        get many more examples of uniform arithmetic lattices in $\SO(1, n)$, or
        more generally in products of orthogonal groups.
\end{rmk}


\bibliographystyle{amsalpha}
\bibliography{../../emery-bib.bib}

\end{document}